\numberwithin{equation}{section}
\theoremstyle{plain}
 \newtheorem{thm}{Theorem}[section]
 \newtheorem{lem}[thm]{Lemma}
 \newtheorem{cor}[thm]{Corollary}
 \newtheorem{prop}[thm]{Proposition}
\theoremstyle{definition}
 \newtheorem{defn}[thm]{Definition}
 \newtheorem{rem}[thm]{Remark}
\newcommand{\al}{\alpha}
\newcommand{\bt}{\beta}
\newcommand{\Gm}{\Gamma}
\newcommand{\ld}{\lambda}
\newcommand{\q}{\quad}
\newcommand{\wh}{\widehat}
\newcommand{\wt}{\widetilde}
\newcommand{\la}{\langle}
\newcommand{\ra}{\rangle}
\newcommand{\B}{\mathcal{B}}
\newcommand{\R}{\mathbb{R}}
\newcommand{\N}{\mathbb{N}}
\newcommand{\Z}{\mathbb{Z}}
\newcommand{\rd}{{\mathbb R^d}}
\newcommand{\law}{\mathcal L}
\newcommand{\sek}{\int_0^{\infty}}
\def\1{\scalebox{0.94}{$1$}\hspace{-0.36em}1}
\begin{document}
\setlength{\baselineskip}{18pt}
\setlength{\parindent}{1.8pc}

\title{Nested subclasses of the class of $\alpha$-selfdecomposable distributions}
\subjclass[2000]{60E07, 60G51, 60F05}
\keywords{selfdecomposable distribution, $\alpha$-selfdecomposable distribution, nested subclass,
limit theorem, mapping of infinitely divisible distribution}
\maketitle

\begin{center}
{{\sc Makoto Maejima\footnote{maejima@math.keio.ac.jp} 
and Yohei Ueda\footnote{ueda@2008.jukuin.keio.ac.jp}}\\
\vskip 3mm
Department of Mathematics, Keio University\\
3-14-1, Hiyoshi, Kohoku-ku\\
Yokohama 223-8522, Japan}
\end{center}

\begin{abstract}
A probability distribution $\mu$ on $\rd$ is selfdecomposable if its characteristic function $\wh\mu(z), z\in\rd$,
satisfies that for any $b>1$, there exists an infinitely divisible distribution $\rho_b$ satisfying
$\wh\mu(z) = \wh \mu (b^{-1}z)\wh\rho_b(z)$.
This concept has been generalized to the concept of $\al$-selfdecomposability by many authors in the following way.
Let $\al\in\R$.
An infinitely divisible distribution $\mu$ on $\rd$ is $\al$-selfdecomposable, if 
 for any $b>1$, there exists an infinitely divisible distribution $\rho_b$ satisfying
$\wh\mu(z) = \wh \mu (b^{-1}z)^{b^{\al}}\wh\rho_b(z)$.
By denoting the class of all $\al$-selfdecomposable distributions on $\rd$ by $L^{\la\al\ra}(\rd)$, we define in this paper
a sequence of nested subclasses of $L^{\la\al\ra}(\rd)$, 
and investigate several properties of them by two ways.
One is by using limit theorems and the other is by using mappings of infinitely divisible distributions.
\end{abstract}
\section{Introduction}
Let $\mathscr P(\rd)$ and $I(\rd)$ be the class of all probability distributions on $\rd$ and the class of all infinitely 
divisible distributions on $\rd$, respectively, 
and let
$I_{\log^{m}}(\rd)= \{ \mu\in I(\rd)\colon \int_{\rd}(\log ^+|x|)^{m}\mu(dx)<\infty\}$ for $m\in\N$ and 
$I_{\log}(\rd):=I_{\log^1}(\rd)$, where $|x|$ is the Euclidean norm of $x\in\rd$ and $\log^+|x|= (\log|x|)\vee 0$.
The terminology of $\alpha$-selfdecomposability was introduced in \citet{MaejimaUeda2009b}.
This is a generalization of selfdecomposability.
Here $\mu\in\mathscr P(\rd)$ is said to be selfdecomposable if for each $b>1$ there exists $\rho_b\in \mathscr P(\rd)$ 
satisfying $\widehat{\mu}(z)=\widehat{\mu}(b^{-1}z)\widehat{\rho}_b(z),z\in\rd$,
where $\wh\mu(z),z\in\rd$, stands for the characteristic function of $\mu\in\mathscr P(\rd)$.
These $\rho_b$ automatically belong to $I(\rd)$. 
We denote the totality of selfdecomposable distributions on $\rd$ by $L(\rd)$.
It is well known that $L(\rd)\subset I(\rd)$.
Our generalization of selfdecomposability is as follows.
\begin{defn}[\citet{MaejimaUeda2009b}]\label{alpha-selfdecomposable}
Let $\alpha\in\R$. 
We say that $\mu\in I(\rd)$ is \emph{$\alpha$-selfdecomposable}, 
if for any $b>1$, there exists $\rho_b\in I(\rd)$ satisfying
\begin{equation}\label{def_characteristic_function}
\widehat{\mu}(z)=\widehat{\mu}(b^{-1}z)^{b^\alpha}\widehat{\rho}_b(z),\quad z\in\rd.
\end{equation}
We denote the totality of $\alpha$-selfdecomposable distributions on $\rd$ 
by $L^{\la\alpha\ra}(\rd)$.
\end{defn}
Note that $L^{\la0\ra}(\rd)=L(\rd)$.
And $L^{\la-1\ra}(\rd)$ is the class of all $s$-selfdecomposable distributions on $\rd$, which is sometimes 
written as $U(\rd)$ and was studied deeply by Jurek, (see, e.g., \citet{Jurek1981,Jurek1985,Jurek2004} or 
\citet{IksanovJurekSchreiber2004}).
Also, the classes $L^{\la\al\ra}(\rd), \alpha\in\R,$ and similar ones were already studied by
several authors.
\citet{Jurek1988,Jurek1989,Jurek1992}, and \citet{JurekSchreiber1992} studied
the classes $\mathscr U_\beta(Q), \beta\in\R$, of distributions on a real separable Banach 
space $E$, where $Q$ is a linear operator on $E$ with certain properties. 
These classes are equal to $L^{\la\alpha\ra}(\rd)$ if $\beta=-\alpha$, $E=\rd$ and $Q$ is the identity operator.
As to these classes, they studied the decomposability and stochastic integral characterizations,
although some results are only for the case that $Q$ is the identity operator.
However, since, for $0<\alpha< 2$, $L^{\la\alpha\ra}(\rd)$ contains all $\alpha$-stable distributions and 
any $\mu\in L^{\la \al\ra}(\rd)$ 
belongs to the normal domain of attraction of some $\alpha$-stable distribution,
we adopt the parametrization in Definition \ref{alpha-selfdecomposable}.
For details on this history, see \citet{MaejimaUeda2009b}.

$L(\rd)$ is characterized by, for example, radial components of L\'evy measures, a stochastic integral representation,
and
the relation to Ornstein-Uhlenbeck type processes, (see, e.g., \citet{Sato's_book2003}).
By \citet{MaejimaUeda2009b} and others, these characterizations of $L(\rd)$ were generalized to
$L^{\la\alpha\ra}(\rd)$.

As to nested subclasses of $L(\rd)$, the following are known, (see, e.g., \linebreak
\citet{Sato's_book2003}).
Define nested subclasses $L_m(\rd),m\in\Z_+$ of $L(\rd)$ in the following way:
$\mu \in L_m(\rd)$ if and only if for each $b>1$, there exists $\rho_b\in L_{m-1}(\rd)$ such that
$\wh \mu (z) = \wh\mu (b^{-1}z)\wh\rho_b (z)$,
where $L_0(\rd):=L(\rd)$.
Since, by definition, $L_m(\rd)\supset L_{m+1}(\rd)$, these are called nested subclasses.
Besides, we introduce an operation $\mathfrak Q(\cdot)$ in the following way:
Let $H\subset \mathscr P(\rd)$.
We say that $\mu\in \mathscr P(\rd)$ belongs to $\mathfrak Q(H)$ if there exist sequences 
$\{X_n\}$ of $\rd$-valued independent random variables, $\{a_n\}\subset(0,\infty)$, 
and $\{c_n\}\subset\rd$
such that
$\{\law(X_n),n\in\N\}\subset H$, $\{a_n^{-1}X_j,1\leq j\leq n;n\in\N\}$ is infinitesimal, and
$$
\law\left(a_n^{-1}\sum_{j=1}^nX_j+c_n\right)\to\mu\quad\text{as }n\to\infty,
$$
where $\law (X)$ means the law of a random variable $X$.
Then it is known that $L_0(\rd)=\mathfrak Q(\mathscr P(\rd))=\mathfrak Q(I(\rd))$ and 
$L_m(\rd)=\mathfrak Q(L_{m-1}(\rd))$ for $m\in\N$ so that
$L_m(\rd)=\mathfrak Q^{m+1}(\mathscr P(\rd))=\mathfrak Q^{m+1}(I(\rd))$ for $m\in\Z_+$, 
where $\mathfrak Q^{m+1}(\cdot)$ denotes the $m+1$ times iteration of the $\mathfrak Q(\cdot)$-operation.
On the other hand, if we define a mapping $\Phi$ by
$$
\Phi(\mu)=\law\left(\int_0^\infty e^{-t}dX_t^{(\mu)}\right),\q \mu\in I_{\log}(\rd),
$$
where $\{X_t^{(\mu)},t\geq 0\}$ is a L\'evy process on
 $\rd$ with $\mu\in I(\rd)$ as its distribution at time 1,
then it is known that for $m\in\Z_+$, $L_m(\rd)$ is realized as the range of 
the $m+1$ times composition of $\Phi$, namely,
$\mathfrak R(\Phi^{m+1})=L_m(\rd)$,
where the domain of $\Phi^{m+1}$ is $I_{\log^{m+1}}(\rd)$.
Furthermore, the limit $L_\infty(\rd):=\lim_{m\to\infty}L_m(\rd)=\bigcap_{m=0}^\infty L_m(\rd)$ is known to be equal to  
$\overline{S(\rd)}$, 
which is the closure under convolution and weak convergence, of the class of all stable distributions.
Namely,
$$
\lim_{m\to\infty}\mathfrak Q^{m+1}(\mathscr P(\rd))=\lim_{m\to\infty}\mathfrak Q^{m+1}(I(\rd))=
\lim_{m\to\infty}\mathfrak R(\Phi^{m+1})=L_{\infty}(\rd)=\overline{S(\rd)}.
$$

The following was already done as to nested subclasses of $L^{\la\alpha\ra}(\rd),\alpha\in\R$.
\citet{Jurek2004} studied nested subclasses of $L^{\la-1\ra}(\rd)$, 
\citet{MaejimaSato2009} found the limit of the nested subclasses of $L^{\la\alpha\ra}(\rd),-1\leq \alpha<0,$ defined by mappings,
and \citet{MaejimaMatsuiSuzuki} investigated nested subclasses of $L^{\la\alpha\ra}(\rd),\alpha<2,$ in terms of mappings.
However, the study on nested subclasses of $L^{\la\alpha\ra}(\rd),\alpha\in\R,$ in terms of limit theorems and mappings is not completed yet
and 
the purpose of this paper is to do it.

\citet{MaejimaSato2009} proved that the limits of several nested classes defined by stochastic integral mappings
are identical with $\overline{S(\rd)}$. 
Then a natural question arose.
Can we find mappings by which, as the limit of iteration, we get a larger or a smaller class than 
$\overline{S(\rd)}$?
\citet{Sato20072008} constructed mappings producing a class smaller than $\overline{S(\rd)}$
and \citet{MaejimaUeda2009} found mappings which produce a larger class than $\overline{S(\rd)}$.
In Theorems \ref{R(Phi_{alpha}^infty)}, we will see that stochastic integral mappings associated with classes $L^{\la\alpha\ra}(\rd),\alpha\in(0,2),$
make smaller classes than $\overline{S(\rd)}$ as the limits of the ranges of their iteration,
which is the same iterated limit as that of Sato's mappings above.
Also,
in Corollary \ref{Phi_alpha^infty(H)}, we see a result about nested classes of $L^{\la\alpha\ra}(\rd)$ based on $H\subset I(\rd)$ with certain properties instead of $I(\rd)$,
which enable us to find the iterated limit of some other stochastic integral mappings, (see Remark \ref{application} and \citet{MaejimaUeda2009e}).

Organization of this paper is as follows.
In Section 2, we explain necessary notation and give some preliminaries.
In Section 3, we study nested subclasses of $L^{\la\alpha\ra}(\rd)$ in terms of a limit theorem.
In Section 4, we investigate nested subclasses of $L^{\la\alpha\ra}(\rd)$ in terms of a mapping of
infinitely divisible distributions,
by using the results in Section 3.
In Section 5, a supplementary remark is mentioned.
\vskip 10mm
\section{Notation and preliminaries}
In this section, we explain necessary notation and give some preliminaries.

Throughout this paper, we use the L\'evy-Khintchine representation of the characteristic function of $\mu\in I(\R^d)$ 
in the following form:
$$
\widehat{\mu}(z)=\exp\left\{-\frac 12 \la z,Az\ra +i\la \gamma,z\ra +\int_\rd 
\left(e^{i\la z,x\ra}-1-\frac{i\la z,x\ra}{1+|x|^2}\right)\nu(dx)\right\},\quad z\in\rd,
$$
where $\la\cdot,\cdot\ra$ is Euclidean inner product on $\rd$ respectively, $A$ is a 
nonnegative-definite symmetric $d\times d$ 
matrix, $\gamma\in\rd$, 
and $\nu$ is a measure satisfying $\nu(\{0\})=0$ and $\int_\rd(|x|^2\wedge 1)\nu(dx)<\infty$. 
$\nu$ is called the L\'evy measure of $\mu\in I(\rd)$.
We also call $(A,\nu,\gamma)$ the L\'evy-Khintchine triplet of $\mu$ and we write $\mu=\mu_{(A,\nu,\gamma)}$ 
when we want to emphasize its L\'evy-Khintchine triplet.
$C_{\mu}(z),z\in\rd$, denotes the cumulant function of $\mu\in I(\rd)$, that is, $C_{\mu}(z)$ is the unique continuous function
satisfying $\wh\mu(z) = e^{C_{\mu}(z)}$ and $C_{\mu}(0)=0$.
For $\mu\in I(\rd)$ and $t>0$, we call the distribution with characteristic function $\wh\mu(z)^t:=e^{tC_\mu(z)}$ the 
$t$-th convolution of $\mu$ and denote it by $\mu^t$.

A set $H\subset\mathscr P(\rd)$ is said to be closed under type equivalence if $\law(X)\in H$ implies $\law(aX+c)\in H$ 
for $a>0$,
 and $c\in\rd$. $H\subset I(\rd)$ is called completely closed in the strong sense (abbreviated as {c.c.s.s.})
if $H$ is closed 
under convolution, weak convergence, type equivalence, and $t$-th convolution for any $t>0$.
Note that $I(\rd)$ and $L(\rd)$ are c.c.s.s., but $S(\rd)$ is not.

$\B_0(\rd)$ denotes the totality of $B\in\B(\rd)$ satisfying $\inf_{x\in B}|x|>0$.
Let $S=\{x\in\rd\colon|x|=1\}$ and we write, for $E\in\B((0,\infty))$ and $C\in\B(S)$, 
$EC:=\{x\in\rd\setminus\{0\}\colon|x|\in E\text{ and }x/|x|\in C\}$.

We also use stochastic integrals with respect to L\'evy processes.
Stochastic integrals with respect to L\'evy processes $\{X_t, t\ge 0\}$ of nonrandom measurable 
functions $f\colon [0,\infty)\to\R$, which are $\int_0^tf(s)dX_s ,t\in[0,\infty)$, are deeply studied in \citet{Sato2004,Sato2006a},
and his way of defining a stochastic integral with respect to a L\'evy process is to define a stochastic integral
based on the $\rd$-valued independently scattered random measure induced by a L\'evy process on $\rd$.
The improper stochastic integral $\sek f(s)dX_s$ is defined as the limit in probability of 
$\int_0^tf(s)dX_s$
as $t\to\infty$ whenever the limit exists.

Using stochastic integrals with respect to L\'evy processes, we can define a mapping
\begin{equation}\label{mapping}
\Phi_f(\mu) = \law \left ( \sek f(t)dX_t^{(\mu)}\right ) ,\quad \mu\in\mathfrak D(\Phi_f)\subset I(\rd),
\end{equation}
for a nonrandom measurable function $f\colon [0,\infty)\rightarrow \R$,
where $\mathfrak D (\Phi_f)$ is the domain of a mapping 
$\Phi_f$ that is
the class of $\mu\in I(\rd)$ for which $\sek f(t)dX_t^{(\mu)}$ is definable in the sense above. 
When we consider the composition of two mappings $\Phi_f$ and $\Phi_g$, denoted by $\Phi_g\circ\Phi_f$,
the domain of $\Phi_g\circ\Phi_f$ is $\mathfrak D (\Phi_g\circ \Phi_f) = \{ \mu\in I(\rd)\colon\mu\in \mathfrak D (\Phi_f)
\text{ and }\Phi_f(\mu)\in \mathfrak D (\Phi_g)\}$.
Also, for a mapping $\Phi_f$ and $m\in\N$, we denote by $\Phi_f^m$ the $m$ times composition of $\Phi_f$ itself.
Once we define such a mapping, we can characterize a subclass of $I(\rd)$ as the range of $\Phi_f$, 
$\mathfrak R (\Phi_f):=\Phi_f(\mathfrak D (\Phi_f))$.
See also \citet{Sato2006}.
\vskip 10mm
\section{Nested subclasses of the class of $\alpha$-selfdecomposable distributions defined by limit theorems
and their characterizations in terms of L\'evy measures}
We start this section with the following definition, which defines a subclass of $I(\rd)$ through a limit theorem.
\begin{defn}\label{limit_theorem}
Let $\alpha\in\R$ and $H\subset I(\rd)$.
$\mu\in \mathscr P(\rd)$ is said to belong to the class $\mathfrak Q_\alpha(H)$ if 
there exist a sequence $\{\mu_j,j\in\N\}\subset I(\rd)$ satisfying $\{\mu_j,j\geq j_0\}\subset H$ for some 
$j_0\in\N$, $a_n>0,\,\uparrow \infty $ satisfying $a_{n+1}/a_n\rightarrow 1$, $c_n\in\rd$, and $p_n>0$ 
satisfying $p_n/ a_n^\alpha\to 1$ such that
\begin{equation}\label{limit}
\lim_{n\rightarrow \infty}\prod_{j=1}^n\wh\mu_j(a_n^{-1}z)^{p_n}e^{i\la c_n,z\ra}= \wh\mu(z),\quad\text{for }z\in\rd.
\end{equation}
\end{defn}
\begin{rem}
In Definition \ref{limit_theorem}, we assume $H$ to be a subclass of $I(\rd)$ 
because we need the $t$-th convolution 
of its elements for $t>0$. 
Due to this assumption, we do not need the infinitesimal condition, as \citet{Jurek2004} remarked. 
Then, Definition \ref{limit_theorem} is similar to the limit theorem 
characterizing the class of selfdecomposable distributions $L(\rd)$.
\end{rem}
The following is immediately obtained by definition.
\begin{lem}\label{Inclusion}
Let $\alpha\in\R$. If $H_1\subset H_2\subset I(\rd)$, then $\mathfrak Q_\alpha(H_1)\subset \mathfrak Q_\alpha(H_2)$.
\end{lem}
We can characterize the classes $\mathfrak Q_\alpha(H)$ by the  decomposability and
$L^{\la\al\ra}(\rd)$ by the $\mathfrak Q_\alpha(\cdot)$-operation as follows.
\begin{thm}\label{characterization_c.f.}
Let $\alpha\in\R$ and let $H\subset I(\rd)$ be c.c.s.s.
\begin{enumerate}[(i)]
\item $\mu\in \mathfrak Q_\alpha(H)$ if and only if $\mu\in I(\rd)$ and for each $b>1$ there exists $\rho_b\in H$ satisfying
\eqref{def_characteristic_function}.
\item $\mathfrak Q_\alpha(I(\rd)) = L^{\la \al \ra}(\rd)$.
\end{enumerate}
\end{thm}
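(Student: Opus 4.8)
The plan is to prove (i) first and then read off (ii) as the special case $H=I(\rd)$: since $I(\rd)$ is c.c.s.s., part (i) asserts that $\mu\in\mathfrak Q_\alpha(I(\rd))$ if and only if $\mu\in I(\rd)$ and for every $b>1$ there exists $\rho_b\in I(\rd)$ with \eqref{def_characteristic_function}, which is exactly Definition \ref{alpha-selfdecomposable} of $L^{\la\alpha\ra}(\rd)$. Thus everything reduces to (i).

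For the ``only if'' part of (i), suppose $\mu\in\mathfrak Q_\alpha(H)$, realized by $\{\mu_j\}$, $\{a_n\}$, $\{c_n\}$, $\{p_n\}$ as in Definition \ref{limit_theorem}. Each finite product in \eqref{limit} is the characteristic function of an element of $I(\rd)$ (a shifted convolution of dilations of $p_n$-th convolutions of the $\mu_j$), so $\mu\in I(\rd)$ because $I(\rd)$ is closed under weak convergence. Now fix $b>1$ and set $m=m(n):=\max\{k\le n\colon a_k\le a_n/b\}$; since $a_n\uparrow\infty$ and $a_{n+1}/a_n\to1$, one checks that $m(n)\to\infty$, $m(n)<n$, and $a_{m(n)}/a_n\to b^{-1}$. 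I would split the product in \eqref{limit} into a head $\prod_{j=1}^{m}$ and a tail $\prod_{j=m+1}^{n}$. Writing $a_n^{-1}z=a_m^{-1}w_n$ with $w_n=(a_m/a_n)z\to b^{-1}z$, and using $p_n/p_m=(p_n/a_n^\alpha)(a_m^\alpha/p_m)(a_n/a_m)^\alpha\to b^\alpha$ together with the local uniform convergence of the head sequence to $\wh\mu$, I get, after inserting a suitable phase $e^{i\la\phi_n,z\ra}$ with $\phi_n=(p_n/p_m)(a_m/a_n)c_m$, that the head converges to $\wh\mu(b^{-1}z)^{b^\alpha}$ (this uses $\wh\mu(b^{-1}z)\ne0$, so that the $(p_n/p_m)$-th power is continuous at the base). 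Dividing \eqref{limit} by the head then forces the recentred tail to converge to $\wh\rho_b(z):=\wh\mu(z)/\wh\mu(b^{-1}z)^{b^\alpha}$, a characteristic function of some $\rho_b\in I(\rd)$. Finally, for large $n$ we have $m(n)\ge j_0$, so the recentred tail is a shifted dilation of $\mu_{m+1}^{p_n}\ast\cdots\ast\mu_n^{p_n}$ with all $\mu_j\in H$; since $H$ is c.c.s.s.\ this distribution lies in $H$, and closedness under weak convergence gives $\rho_b\in H$, establishing \eqref{def_characteristic_function}.

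For the ``if'' part, assume $\mu\in I(\rd)$ admits, for every $b>1$, a decomposition \eqref{def_characteristic_function} with $\rho_b\in H$. Choosing $b_k\downarrow1$ with $a_n:=\prod_{k=1}^n b_k\uparrow\infty$ (so that $a_{n+1}/a_n=b_{n+1}\to1$) and iterating \eqref{def_characteristic_function} yields, for each $n$, the exact identity $\wh\mu(z)=\wh\mu(a_n^{-1}z)^{a_n^\alpha}\prod_{j=1}^n\wh\rho_{b_j}(a_{j-1}^{-1}z)^{a_{j-1}^\alpha}$, equivalently $C_\mu(z)=a_n^\alpha C_\mu(a_n^{-1}z)+\sum_{j=1}^n a_{j-1}^\alpha C_{\rho_{b_j}}(a_{j-1}^{-1}z)$. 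Setting $p_n:=a_n^\alpha$ and taking $\mu_1:=\mu$ as the single factor with index below $j_0$, the residual $\wh\mu(a_n^{-1}z)^{p_n}$ is reproduced \emph{exactly} by $\mu_1$ in \eqref{limit}, so it remains to manufacture $\mu_j\in H$ ($j\ge2$) for which the common-scale row product $\prod_{j=2}^{n}\wh\mu_j(a_n^{-1}z)^{p_n}$ has the same limit as the telescoped product $\prod_{j=1}^{n-1}\wh\rho_{b_j}(a_{j-1}^{-1}z)^{a_{j-1}^\alpha}$. I would take each $\mu_j$ to be a fixed dilation of the corresponding $\rho_{b_j}$ and control the difference through the cumulant functions, using Lemma \ref{Inclusion} and the c.c.s.s.\ hypothesis to keep every $\mu_j$ and every row product inside $H$.

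The main obstacle is precisely this last step. The telescoping lives at the \emph{per-term} scale $a_{j-1}^{-1}$ and power $a_{j-1}^\alpha$, whereas Definition \ref{limit_theorem} forces a single scale $a_n^{-1}$ and power $p_n$ in the $n$-th row; since the two cannot be matched term by term, the equality of the limits must be proved as an approximation. The quantitative input is that $\rho_b\to\delta_0$ as $b\downarrow1$, so each $C_{\rho_{b_j}}$ is infinitesimal of order $b_j-1$ and $\sum_j a_{j-1}^\alpha C_{\rho_{b_j}}(a_{j-1}^{-1}z)$ becomes a Riemann-type sum along the scaling flow $s\mapsto e^{\alpha s}C_\mu(e^{-s}z)$; the conditions $a_{n+1}/a_n\to1$ (vanishing mesh) and $p_n/a_n^\alpha\to1$ then make the common-scale row sums converge, locally uniformly in $z$, to the same value, so that \eqref{limit} holds with $\wh\mu$ on the right. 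Carrying out this convergence while staying inside $H$, and without appealing to the L\'evy-measure description of $L^{\la\alpha\ra}(\rd)$ that is only derived later in this section, is the delicate heart of the argument.
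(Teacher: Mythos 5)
Your ``only if'' argument for (i), and your reduction of (ii) to (i), are correct and are essentially the paper's own proof: the paper also splits the product at an index $m_l<n_l$ with $a_{n_l}/a_{m_l}\to b$ (quoting the argument of Theorem 15.3 (i) of \citet{Sato's_book1999}, which is exactly your $m(n)$ device), identifies the limit of the head as $\wh\mu(b^{-1}z)^{b^\alpha}$ and of the recentred tail as $\wh\mu(z)/\wh\mu(b^{-1}z)^{b^\alpha}$, and uses that $H$ is c.c.s.s.\ to conclude $\rho_b\in H$. The genuine gap is in the ``if'' direction, and you have flagged it yourself: you never exhibit the $\mu_j$, and the Riemann-sum approximation scheme that is supposed to yield \eqref{limit} is not carried out. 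As written, that half is not a proof.

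Moreover, the premise that forces you into an approximation --- that the per-term data of the iterated decomposition (scale $a_{j-1}^{-1}$, power $a_{j-1}^{\alpha}$) ``cannot be matched term by term'' with the common row data (scale $a_n^{-1}$, power $p_n$) --- is false, and recognizing this is precisely the missing idea. Take $a_n:=n$, $p_n:=n^{\alpha}$, $c_n:=0$, $\mu_1:=\mu$, and for $j\ge2$ define $\mu_j$ by
$$
\wh\mu_j(z):=\wh\rho_{j/(j-1)}(jz)^{j^{-\alpha}},
$$
i.e.\ $\mu_j$ is the dilation by $j$ of the $j^{-\alpha}$-th convolution power of $\rho_{j/(j-1)}$; in particular $\mu_j\in H$ because $H$ is closed under type equivalence and $t$-th convolution. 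Dilating by the \emph{absolute} index $j$ is the key: it converts the single row scale into the running telescoping scale, since
$$
\wh\mu_j(n^{-1}z)^{n^{\alpha}}=\wh\rho_{j/(j-1)}\left(\tfrac{j}{n}z\right)^{(n/j)^{\alpha}}
=\frac{\wh\mu\left(\tfrac{j}{n}z\right)^{(n/j)^{\alpha}}}{\wh\mu\left(\tfrac{j-1}{n}z\right)^{(n/(j-1))^{\alpha}}},
$$
the second equality being \eqref{def_characteristic_function} with $b=j/(j-1)$ evaluated at $jz/n$ and raised to the power $(n/j)^{\alpha}$. Multiplying over $2\le j\le n$ telescopes to $\wh\mu(z)/\wh\mu(n^{-1}z)^{n^{\alpha}}$, and the $\mu_1$-factor $\wh\mu(n^{-1}z)^{n^{\alpha}}$ restores $\wh\mu(z)$ exactly, for every $n\ge2$. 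Thus \eqref{limit} holds with a constant sequence: no limiting argument, no infinitesimality of $\rho_b$ as $b\downarrow1$, and no excursion outside $H$ are needed. (The same device works in your more general setup with $b_k\downarrow1$ and $a_n=\prod_{k\le n}b_k$: set $\wh\mu_j(z):=\wh\rho_{b_j}(a_jz)^{a_j^{-\alpha}}$.)
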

\begin{proof}
(i) We first show the ``if'' part.
Let $\mu\in I(\rd)$ and for each $b>1$ there exists $\rho_b\in H$ satisfying
\eqref{def_characteristic_function}. Then, it suffices to set $\mu_1:=\mu\in I(\rd)$, $\wh\mu_j(z):=
\wh\rho_{j/(j-1)}(jz)^{j^{-\alpha}}$ for $j\geq 2$, $a_n:=n$, $p_n:=n^\alpha$, and $c_n:=0$. 
Indeed, $\{\mu_j,j\geq 2\}\subset H$ since $H$ is c.c.s.s., and
for all $n\ge 2$,
\begin{align*}
\prod_{j=1}^n\wh\mu_j(a_n^{-1}z)^{p_n}e^{i\la c_n,z\ra}
&=\wh\mu\left(\frac{1}{n}z\right)^{n^{\alpha}}\prod_{j=2}^n\wh\rho_{j/(j-1)}
\left(\frac{j}{n}z\right)^{\left(\frac{n}{j}\right)^{\alpha}}\\
&=\wh\mu\left(\frac{1}{n}z\right)^{n^{\alpha}}\prod_{j=2}^n
\frac
{\wh\mu\left(\frac{j}{n}z\right)^{\left(\frac{n}{j}\right)^{\alpha}}}
{\wh\mu\left(\frac{j-1}{n}z\right)^{\left(\frac{n}{j-1}\right)^{\alpha}}}
=\wh\mu(z),
\end{align*}
implying \eqref{limit}.

We next show the ``only if'' part.
For any $b>1$, we can take $n_l,m_l\in\N$ diverging to $\infty$ such that $m_l<n_l$ and 
$a_{n_l}a_{m_l}^{-1}\rightarrow b$ as $l \to \infty$.
This is possible, due to the argument in the proof of Theorem 15.3 (i) of \citet{Sato's_book1999}. 
Then,
\begin{align*}
\prod_{j=1}^{n_l}\wh\mu_j\left(a_{n_l}^{-1}z\right)^{p_{n_l}}e^{i\la c_{n_l},z\ra}
=&\left\{\prod_{j=1}^{m_l}\wh\mu_j\left(a_{m_l}^{-1}(a_{m_l}a_{n_l}^{-1}z)\right)^{p_{m_l}}
e^{i\left\la c_{m_l},a_{m_l}a_{n_l}^{-1}z\right\ra}\right\}^{p_{n_l}p_{m_l}^{-1}}\\
&\quad\times \prod_{j=m_l+1}^{n_l}\wh\mu_j\left(a_{n_l}^{-1}z\right)^{p_{n_l}}
e^{i\left\la c_{n_l}-c_{m_l}a_{m_l}a_{n_l}^{-1}p_{n_l}p_{m_l}^{-1},z\right\ra},
\end{align*}
where the left-hand side and the first term of right-hand side tend to $\wh\mu(z)$ and 
$\wh\mu(b^{-1}z)^{b^\alpha}$ as $l\to\infty$, respectively, by virtue of the uniform convergence of 
the characteristic functions. 
Since $\wh\mu(z)$ is the limit of the sequence of infinitely divisible distributions, 
$\mu$ is also infinitely divisible 
and thus $\wh\mu(b^{-1}z)^{b^\alpha}\neq 0$ for all $z\in\rd$.
The second term of the right-hand side converges to $\wh\mu(z)/\wh\mu(b^{-1}z)^{b^\alpha}$ 
which is continuous at $z=0$ and therefore the characteristic function of some probability measure $\rho_b$. 
Then, \eqref{def_characteristic_function} holds. 
Furthermore, since $\{\mu_j,j\geq j_0\}\subset H$ and $H$ is c.c.s.s., we have $\rho_b\in H$.

(ii) This is an immediate consequence of the part (i) that we
 have just shown and the definition of $L^{\la\al\ra}(\rd)$.
\end{proof}
The following holds from Theorem \ref{characterization_c.f.}.
\begin{cor}\label{alpha>=2}
Let $H (\neq \emptyset) \subset I(\rd)$ be c.c.s.s. 
Then, $\mathfrak Q_2(H)$ is the class of all Gaussian distributions on $\rd$, and for $\alpha>2$, $\mathfrak Q_\alpha(H)$ 
is the class of all $\delta$-distributions on $\rd$.
\end{cor}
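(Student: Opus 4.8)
The plan is to characterize membership in $\mathfrak Q_\al(H)$ through the decomposability equation \eqref{def_characteristic_function} supplied by Theorem \ref{characterization_c.f.}(i), and then read off the restrictions that $\al\ge 2$ forces on the L\'evy-Khintchine triplet of $\mu$. By Theorem \ref{characterization_c.f.}(i), $\mu\in\mathfrak Q_\al(H)$ means $\mu=\mu_{(A,\nu,\gm)}\in I(\rd)$ together with, for every $b>1$, some $\rho_b\in H\subset I(\rd)$ satisfying $\wh\mu(z)=\wh\mu(b^{-1}z)^{b^\al}\wh\rho_b(z)$. Comparing triplets, the distribution with characteristic function $\wh\mu(b^{-1}z)^{b^\al}$ has Gaussian part $b^{\al-2}A$ and L\'evy measure $b^\al T_{b^{-1}}\nu$, where $(T_{b^{-1}}\nu)(B)=\nu(bB)$, so $\rho_b$ has Gaussian part $(1-b^{\al-2})A$ and L\'evy measure $\nu-b^\al T_{b^{-1}}\nu$. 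The requirement $\rho_b\in I(\rd)$ then forces two positivity conditions: $(1-b^{\al-2})A$ is nonnegative-definite, and $\nu(B)\ge b^\al\nu(bB)$ for all $B\in\B(\rd)$ and all $b>1$.

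From here the inclusions ``$\subset$'' follow by extracting consequences of these conditions. For the Gaussian part, when $\al>2$ we have $b^{\al-2}>1$, so $(1-b^{\al-2})A\ge 0$ forces $A=0$, whereas for $\al=2$ the coefficient vanishes and $A$ is unconstrained. For the L\'evy measure I would show $\nu=0$ whenever $\al\ge 2$. Writing $G(s)=\nu(\{|x|>s\})$, the inequality with $B=\{|x|>s\}$ gives $G(s)\ge b^\al G(bs)$, hence $G(s)\ge(1/s)^\al G(1)$ for $0<s<1$. The decisive step is near the origin: taking $b=2$ and the dyadic shells $A_n=\{2^{-n-1}<|x|\le 2^{-n}\}$, the inequality yields $\nu(A_n)\ge 2^{\al n}\nu(A_0)$, whence
\[
\int_{0<|x|\le 1}|x|^2\,\nu(dx)\ \ge\ \sum_{n\ge 0}2^{-2n-2}\,\nu(A_n)\ \ge\ \frac{\nu(A_0)}{4}\sum_{n\ge 0}2^{(\al-2)n}.
\]
For $\al\ge 2$ the series diverges, so the L\'evy integrability of $\nu$ forces $\nu(A_0)=0$; applying the same estimate to each dyadic shell gives $\nu=0$ on $\{0<|x|\le 1\}$, after which $G(s)=G(1)$ for $0<s<1$ combined with $G(s)\ge(1/s)^\al G(1)\to\infty$ forces $G(1)=0$, i.e. $\nu=0$ altogether. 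Thus for $\al=2$ every $\mu\in\mathfrak Q_2(H)$ is Gaussian, and for $\al>2$ one additionally has $A=0$, so $\mu=\delta_\gm$.

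For the reverse inclusions I would again use Theorem \ref{characterization_c.f.}(i). The only place the hypotheses $H\ne\emptyset$ and c.c.s.s. enter is to guarantee that $H$ contains every $\delta$-distribution: from any $\mu_0\in H$ the $t$-th convolutions satisfy $\mu_0^t\to\delta_0$ weakly as $t\downarrow 0$, so $\delta_0\in H$ by closure under weak convergence, and closure under type equivalence then yields $\delta_c\in H$ for all $c\in\rd$. Granting this, a direct computation of the quotient $\wh\mu(z)/\wh\mu(b^{-1}z)^{b^\al}$ shows that for a Gaussian $\mu$ (case $\al=2$) the forced factor is $\rho_b=\delta_{(1-b)\gm}$, and for $\mu=\delta_c$ (case $\al>2$) it is $\rho_b=\delta_{(1-b^{\al-1})c}$; both lie in $H$, so the respective $\mu$ belongs to $\mathfrak Q_\al(H)$, completing the two equalities.

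The genuine obstacle I expect is the case $\al=2$ of showing $\nu=0$: the crude tail bound $G(s)\le C s^{-\al}$ is \emph{exactly} borderline for the L\'evy integrability condition and does not by itself eliminate $\nu$, so one must exploit the positivity of $\nu-b^\al T_{b^{-1}}\nu$ on the fine scale of dyadic shells to manufacture a divergent series from $\int_{|x|\le 1}|x|^2\,\nu(dx)$. By contrast, the case $\al>2$ (where even the single-shell estimate already diverges) and the vanishing of the Gaussian coefficient are comparatively immediate.
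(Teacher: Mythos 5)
Your proposal is correct, and it takes a genuinely different route from the paper on the forward inclusion. For $\mathfrak Q_\alpha(H)\subset\{\text{Gaussians}\}$ (resp.\ the $\delta$-distributions), the paper computes nothing: it observes $\mathfrak Q_\alpha(H)\subset\mathfrak Q_\alpha(I(\rd))=L^{\la\alpha\ra}(\rd)$ via Lemma \ref{Inclusion} and Theorem \ref{characterization_c.f.}~(ii), and then simply cites \citet{MaejimaUeda2009b} for the fact that $L^{\la\alpha\ra}(\rd)$ equals the Gaussian class when $\alpha=2$ and the class of $\delta$-distributions when $\alpha>2$. You instead reprove that imported fact from scratch: the triplet comparison forcing $(1-b^{\alpha-2})A\geq 0$ and $\nu(B)\geq b^\alpha\nu(bB)$, and the dyadic-shell estimate showing that $\int_{|x|\leq 1}|x|^2\,\nu(dx)<\infty$ is incompatible with $\nu\neq 0$ when $\alpha\geq 2$, are all correct — including the borderline case $\alpha=2$, where the factor $2^{(\alpha-2)n}$ degenerates to $1$ but the series still diverges, and the final step where $G(s)=G(1)\geq s^{-\alpha}G(1)$ kills the tail. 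What the paper's route buys is brevity and reuse of existing machinery; what yours buys is self-containedness, since your argument nowhere depends on the external characterization of $L^{\la\alpha\ra}(\rd)$ for $\alpha\geq 2$ (in effect you have re-derived it). On the reverse inclusion the two proofs coincide in substance: the paper obtains $\delta_\gamma=\lim_{n\to\infty}\mu^{1/n}*\delta_\gamma\in H$ from the c.c.s.s.\ hypothesis, which is exactly your $\mu_0^t\to\delta_0$ as $t\downarrow 0$ followed by type equivalence, and both proofs then exhibit the quotient $\wh\mu(z)/\wh\mu(b^{-1}z)^{b^\alpha}$ as the characteristic function of a $\delta$-distribution lying in $H$, invoking Theorem \ref{characterization_c.f.}~(i).
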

\begin{proof}
We first prove that $\mathfrak Q_\alpha(H)$ 
includes the class of all Gaussian distributions if $\alpha=2$, and all $\delta$-distributions if $\alpha>2$.
Indeed, if $H(\neq \emptyset)\subset I(\rd)$ is c.c.s.s., then
$\mu\in H$ exists and for all $\gamma\in\rd$, $\delta_\gamma=\lim_{n\to\infty}\mu^{1/n}*\delta_\gamma\in H$ 
since $H$ is c.c.s.s.
If $\mu$ is Gaussian, then for each $b>1$, there is $c_b\in\rd$ satisfying $\wh\mu(z)=\wh\mu(b^{-1}z)^{b^2}e^{i\la c_b,z\ra}$. 
Also, if $\alpha>2$ and $\mu$ is a $\delta$-distribution, then for each $b>1$, there is $c_b\in\rd$ 
satisfying $\wh\mu(z)=\wh\mu(b^{-1}z)^{b^\alpha}e^{i\la c_b,z\ra}$. Noting that $\delta_{c_b}\in H$, we have the assertion.

We next show that $\mathfrak Q_\alpha(H)$ 
is included in the class of all Gaussian distributions if $\alpha=2$, and all $\delta$-distributions if $\alpha>2$.
By Lemma \ref{Inclusion} and Theorem \ref{characterization_c.f.} (ii), we have $\mathfrak Q_\alpha(H)\subset L^{\la\alpha\ra}(\rd)$. Note that $L^{\la\alpha\ra}(\rd)$ is equal to the class of all Gaussian distributions if $\alpha=2$, and all $\delta$-distributions if $\alpha>2$, (see \citet{MaejimaUeda2009b}).
\end{proof}
For $0<\beta\leq 2$, $S_\beta(\rd)$ stands for the totality of $\beta$-stable distributions on $\rd$.
Let $S(\rd):=\bigcup_{\beta\in(0,2]}S_\beta(\rd)$.
\begin{cor}\label{0<alpha<=2}
Let $0<\alpha\leq 2$. Then, $\mathfrak Q_\alpha\left(\{\delta_\gamma\colon\gamma\in\rd\}\right)=S_\alpha(\rd)$.
\end{cor}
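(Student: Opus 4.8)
The plan is to reduce the statement to Theorem \ref{characterization_c.f.} and then to recognize the resulting decomposability relation as the scaling characterization of $\alpha$-stability. First I would check that $H:=\{\delta_\gamma\colon\gamma\in\rd\}$ is c.c.s.s.: one has $\delta_{\gamma_1}*\delta_{\gamma_2}=\delta_{\gamma_1+\gamma_2}$, $\widehat{\delta_\gamma}(z)^t=e^{it\la\gamma,z\ra}=\widehat{\delta_{t\gamma}}(z)$ for $t>0$, any weak limit of a sequence $\delta_{\gamma_n}$ within $\mathscr P(\rd)$ is again a $\delta$-distribution (since pointwise convergence of $e^{i\la\gamma_n,z\ra}$ to a characteristic function forces $\gamma_n$ to converge), and closedness under type equivalence is immediate; hence $H$ is closed under convolution, $t$-th convolution, weak convergence, and type equivalence. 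Since $H$ is c.c.s.s., Theorem \ref{characterization_c.f.}(i) applies and gives: $\mu\in\mathfrak Q_\alpha(H)$ if and only if $\mu\in I(\rd)$ and for every $b>1$ there exists $\gamma_b\in\rd$ with
\[
\wh\mu(z)=\wh\mu(b^{-1}z)^{b^\alpha}e^{i\la\gamma_b,z\ra},\q z\in\rd.
\]

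Next, substituting $w=b^{-1}z$ turns this into $\wh\mu(bw)=\wh\mu(w)^{b^\alpha}e^{i\la b\gamma_b,w\ra}$ for all $b>1$. I would then show that this family of identities is equivalent to the scaling relation defining $S_\alpha(\rd)$, namely that for every $s>0$ there is $d_s\in\rd$ with $\wh\mu(sw)=\wh\mu(w)^{s^\alpha}e^{i\la d_s,w\ra}$, which is the standard characterization of $\alpha$-stable laws for $0<\alpha\le 2$ (see, e.g., \citet{Sato's_book1999}). The case $s>1$ is exactly the identity above (with $s=b$); the case $s=1$ is trivial; and for $0<s<1$ I would apply the identity with $b=1/s>1$ and substitute $w\mapsto sw$, so that $bw$ becomes $w$, and then solve for $\wh\mu(sw)$, obtaining $d_s=-s^\alpha\gamma_b$.

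The one point requiring care --- and the main (if modest) obstacle --- is that this inversion for $0<s<1$ requires raising $\wh\mu$ to the power $s^\alpha$: this is legitimate precisely because $\mu\in I(\rd)$, so $\wh\mu$ never vanishes and $\wh\mu(z)^{s^\alpha}:=e^{s^\alpha C_\mu(z)}$ is unambiguously defined through the cumulant function $C_\mu$. Once the scaling relation holds for all $s>0$, $\mu$ is $\alpha$-stable; conversely every $\mu\in S_\alpha(\rd)$ satisfies the decomposability relation (take $s=b$ in its scaling relation and rearrange, using that $\alpha$-stable distributions lie in $I(\rd)$). Hence the set of $\mu\in I(\rd)$ fulfilling the relation is exactly $S_\alpha(\rd)$, and therefore $\mathfrak Q_\alpha(\{\delta_\gamma\colon\gamma\in\rd\})=S_\alpha(\rd)$.
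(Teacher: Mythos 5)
Your proof is correct and follows essentially the same route as the paper: apply Theorem \ref{characterization_c.f.} (i) with $H=\{\delta_\gamma\colon\gamma\in\rd\}$ and identify the resulting decomposability relation with the scaling characterization of $\alpha$-stability. The only difference is one of detail: the paper simply quotes ``$\mu\in S_\alpha(\rd)$ iff for each $b>1$ there exists $c_b$ with $\wh\mu(z)=\wh\mu(b^{-1}z)^{b^\alpha}e^{i\la c_b,z\ra}$'' as known, whereas you verify that $H$ is c.c.s.s.\ and derive the extension from $b>1$ to all scales $s>0$ via the cumulant function --- steps the paper leaves implicit.
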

\begin{proof}
Note that $\mu\in S_\alpha(\rd)$ if and only if for each $b>1$ there exists $c_b\in\rd$ satisfying $\wh\mu(z)=\wh\mu(b^{-1}z)^{b^\alpha}e^{i\la c_b,z\ra}$.
Then, Theorem \ref{characterization_c.f.} (i) implies the statement.
\end{proof}
For $\alpha<2$, let
$$
\mathcal C_\alpha(\rd)
:=\left\{\mu=\mu_{(A,\nu,\gamma)}\in I(\rd)\colon \lim_{r\rightarrow \infty }
r^\alpha\int_{|x|>r}\nu(dx)=0\right\}.
$$
Note that $\mathcal C_\alpha(\rd)=I(\rd)$ if $\alpha\leq 0$.
If $\alpha<2$, $\mu\in\mathcal C_\alpha(\rd)$ and $H$ is c.c.s.s., then, $\mu\in\mathfrak Q_\alpha(H)$ can be characterized by a limit theorem slightly different from Definition \ref{limit_theorem} as follows.
\begin{thm}\label{limit_theorem2}
Let $\alpha<2$ and let $H\subset I(\rd)$ be c.c.s.s.
Assume $\mu\in\mathcal C_\alpha(\rd)$.
Then,
$\mu\in \mathfrak Q_\alpha(H)$ if and only if
there exist a sequence $\{\mu_j,j\in\N\}\subset H$, $a_n>0,\,\uparrow \infty $ satisfying $a_{n+1}/a_n\rightarrow 1$, $c_n\in\rd$, and $p_n>0$ 
satisfying $p_n/ a_n^\alpha\to 1$ such that
$$
\lim_{n\rightarrow \infty}\prod_{j=1}^n\wh\mu_j(a_n^{-1}z)^{p_n}e^{i\la c_n,z\ra}= \wh\mu(z),\quad\text{for }z\in\rd.
$$
\end{thm}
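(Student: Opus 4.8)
The plan is to dispatch the ``if'' direction at once and to reduce the ``only if'' direction to a single analytic estimate on $\wh\mu$, which is exactly where the hypothesis $\mu\in\mathcal C_\alpha(\rd)$ enters. For the ``if'' direction there is nothing to do beyond invoking Definition~\ref{limit_theorem}: a representation with the \emph{whole} sequence $\{\mu_j\}\subset H$ is in particular one with $\{\mu_j,j\ge j_0\}\subset H$ for $j_0=1$, so $\mu\in\mathfrak Q_\alpha(H)$. This half uses neither $\mathcal C_\alpha(\rd)$ nor that $H$ is c.c.s.s.

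For the ``only if'' direction, first I would feed $\mu\in\mathfrak Q_\alpha(H)$ into Theorem~\ref{characterization_c.f.}(i): since $H$ is c.c.s.s., $\mu\in I(\rd)$ and for every $b>1$ there is $\rho_b\in H$ satisfying \eqref{def_characteristic_function}. I would then rebuild the \emph{canonical} representation from the ``if'' part of that theorem, namely $\mu_1:=\mu$, $\wh\mu_j(z):=\wh\rho_{j/(j-1)}(jz)^{j^{-\alpha}}$ for $j\ge2$, $a_n:=n$, $p_n:=n^\alpha$, $c_n:=0$. Here $\mu_j\in H$ for all $j\ge2$ (by type equivalence and $t$-th convolution, $H$ being c.c.s.s.), so the \emph{only} factor outside $H$ is $\mu_1=\mu$, and the telescoping computation in that proof gives the exact identity $\prod_{j=1}^n\wh\mu_j(n^{-1}z)^{n^\alpha}=\wh\mu(z)$ for all $n\ge2$. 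Dividing out the single bad factor,
\begin{equation*}
\prod_{j=2}^n\wh\mu_j(n^{-1}z)^{n^\alpha}=\wh\mu(z)\,e^{-n^\alpha C_\mu(n^{-1}z)}.
\end{equation*}
Thus everything hinges on showing that $n^\alpha C_\mu(n^{-1}z)$ is, asymptotically, a purely imaginary linear form in $z$: if I can produce $\ell_n\in\rd$ with $n^\alpha C_\mu(n^{-1}z)-i\la\ell_n,z\ra\to0$ pointwise, then $\prod_{j=2}^n\wh\mu_j(n^{-1}z)^{n^\alpha}e^{i\la\ell_n,z\ra}\to\wh\mu(z)$. Reindexing by $\tilde\mu_k:=\mu_{k+1}\in H$, $\tilde a_N:=N+1$, $\tilde p_N:=(N+1)^\alpha$, $\tilde c_N:=\ell_{N+1}$ (the required $\tilde a_{N+1}/\tilde a_N\to1$ and $\tilde p_N/\tilde a_N^\alpha\to1$ are immediate) then yields the desired representation with \emph{all} factors in $H$.

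The hard part will be the estimate $n^\alpha C_\mu(n^{-1}z)=i\la\ell_n,z\ra+o(1)$, and this is exactly where $\mu=\mu_{(A,\nu,\gamma)}\in\mathcal C_\alpha(\rd)$ with $\alpha<2$ is used. Writing $T(r):=\int_{|x|>r}\nu(dx)$, the hypothesis is $r^\alpha T(r)\to0$, which by a standard Abelian (integration-by-parts) argument also gives $n^{\alpha-2}\int_{|x|\le n}|x|^2\nu(dx)\to0$ and $n^{\alpha-3}\int_{|x|\le n}|x|^3\nu(dx)\to0$. For the real part, the Gaussian term contributes $-\tfrac12 n^{\alpha-2}\la z,Az\ra\to0$, while $n^\alpha\int(\cos\la n^{-1}z,x\ra-1)\nu(dx)$, split at $|x|=n$, is dominated by $n^{\alpha-2}|z|^2\int_{|x|\le n}|x|^2\nu(dx)$ and $n^\alpha T(n)$, both $\to0$. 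For the imaginary part I would collect the genuinely linear pieces, $n^{\alpha-1}\la\gamma,z\ra$ and $n^{\alpha-1}\la z,\int_{|x|\le n}x\,|x|^2(1+|x|^2)^{-1}\nu(dx)\ra$, into $\ell_n$; the remaining error on $|x|\le n$ is $O(n^{\alpha-3}|z|^3\int_{|x|\le n}|x|^3\nu(dx))$ via $|\sin\theta-\theta|\le|\theta|^3/6$, and the tail on $|x|>n$ is bounded by $n^\alpha T(n)+n^{\alpha-2}|z|T(n)$, all $\to0$. Collecting these yields $n^\alpha C_\mu(n^{-1}z)-i\la\ell_n,z\ra\to0$ and completes the argument; intuitively $\mathcal C_\alpha(\rd)$ is precisely the ``no surviving jump part'' condition guaranteeing that the scaled convolution power $\wh\mu(n^{-1}z)^{n^\alpha}$ degenerates to a pure shift.
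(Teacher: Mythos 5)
Your proposal is correct, and it follows the same overall skeleton as the paper's proof --- both reduce the ``only if'' direction to showing that $\wh\mu(n^{-1}z)^{n^{\alpha}}$, corrected by a pure shift, degenerates to $1$ --- but it executes the two main steps differently. First, the construction: the paper avoids the ``bad factor'' from the start by shifting the index, setting $\wh\mu_j(z):=\wh\rho_{(j+1)/j}((j+1)z)^{(j+1)^{-\alpha}}$ for $j\ge 1$ (all in $H$), so that the product telescopes to $\wh\mu\left(\tfrac{n+1}{n}z\right)^{(n/(n+1))^{\alpha}}\big/\wh\mu\left(n^{-1}z\right)^{n^{\alpha}}$; you instead keep the sequence from Theorem \ref{characterization_c.f.}(i) with $\mu_1=\mu$, divide out that single factor, and reindex at the end. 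This is a cosmetic difference, and your reindexing ($\tilde a_N=N+1$, $\tilde p_N=(N+1)^{\alpha}$, $\tilde c_N=\ell_{N+1}$) is sound. Second, and more substantively, the degeneracy estimate: the paper proves $\wh\mu(n^{-1}z)^{n^{\alpha}}e^{-i\la c_n,z\ra}\to 1$ by viewing the left side as a sequence of infinitely divisible laws with triplets $(n^{\alpha-2}A,\,n^{\alpha}\nu(n\,\cdot),\,\cdot)$ and invoking Theorem 8.7 of Sato (1999); crucially, to control the truncated second moments it uses the scaling inequality $\nu(B)\ge n^{\alpha}\nu(nB)$, which comes from the $\alpha$-selfdecomposability of $\mu$, in addition to $\mathcal C_\alpha(\rd)$ for the tail part. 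Your route is elementary and self-contained: you derive $n^{\alpha-2}\int_{|x|\le n}|x|^2\nu(dx)\to 0$ and $n^{\alpha-3}\int_{|x|\le n}|x|^3\nu(dx)\to 0$ by integration by parts from $r^{\alpha}T(r)\to 0$ alone (I checked these Abelian estimates; they are valid for all $\alpha<2$), then finish with Taylor bounds on $\cos$ and $\sin$ and the stated tail bounds. What each approach buys: yours needs neither Sato's convergence theorem nor the scaling inequality, and it handles $\alpha\le 0$ and $0<\alpha<2$ uniformly (the paper splits into cases, taking $c_n=0$ when $\alpha\le 0$); the paper's is shorter on the page because the hard analysis is outsourced to standard machinery for convergence of infinitely divisible laws, and its centering $c_n=n^{\alpha-1}\gamma+n^{\alpha}\int_{\rd}x\{(1+|x|^2)^{-1}-(1+|nx|^2)^{-1}\}\nu(n\,dx)$ is the canonical one attached to the triplet, whereas your $\ell_n$ is a different but equally legitimate choice (centerings are only determined up to $o(1)$).
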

\begin{proof}
The ``if" part is trivial by Definition \ref{limit_theorem}.

Let us prove the ``only if" part.
If $\mu=\mu_{(A,\nu,\gamma)}\in \mathfrak Q_\alpha(H)$, then for each $b>1$, there exists $\rho_b\in H$ satisfying 
\eqref{def_characteristic_function} by virtue of Theorem \ref{characterization_c.f.} (i). 
Then, it suffices to set $\wh\mu_j(z):=\wh\rho_{(j+1)/j}((j+1)z)^{(j+1)^{-\alpha}}$, $a_n:=n$, $p_n:=n^\alpha$, $c_n:=0$ 
if $\alpha\leq 0$, and $c_n:=n^{\alpha-1}\gamma+n^{\alpha}\int_\rd x\left\{(1+|x|^2)^{-1}-(1+|nx|^2)^{-1}\right\}\nu(n\,dx)$ 
if $0<\alpha<2$. Indeed, $\{\mu_j,j\in\N\}\subset H$ since $H$ is c.c.s.s. and
$$
\prod_{j=1}^n\wh\mu_j(a_n^{-1}z)^{p_n}=\prod_{j=1}^n\wh\rho_{(j+1)/j}\left(\frac{j+1}{n}z\right)^{\left(\frac{n}{j+1}\right)^{\alpha}}
=\prod_{j=1}^n
\frac
{\wh\mu\left(\frac{j+1}{n}z\right)^{\left(\frac{n}{j+1}\right)^{\alpha}}}
{\wh\mu\left(\frac{j}{n}z\right)^{\left(\frac{n}{j}\right)^{\alpha}}}
=\frac
{\wh\mu\left(\frac{n+1}{n}z\right)^{\left(\frac{n}{n+1}\right)^{\alpha}}}
{\wh\mu\left(\frac{1}{n}z\right)^{n^{\alpha}}}
$$
which tends to $\wh\mu(z)$ as $n\rightarrow \infty$, if $\alpha\leq 0$.
If $0<\alpha<2$, we have
$$
n^{\alpha}C_\mu\left(n^{-1}z\right)-i\la c_n,z\ra=-\frac{1}{2}n^{\alpha-2}\la z,Az\ra+n^\alpha\int_\rd 
\left(e^{i\la z,x\ra}-1-\frac{i\la z,x\ra}{1+|x|^2}\right)\nu(n\,dx).
$$
For any bounded continuous function $f\colon\rd\rightarrow \R$ vanishing on a neighborhood of $0$, it follows that
$$
\lim_{n\rightarrow \infty} n^\alpha\int_\rd f(x)\nu(n\,dx)=0,
$$
since $\mu\in \mathcal C_\alpha(\rd)$. Recalling that $\nu(B)\geq n^{\alpha}\nu(nB)$ for $B\in\B(\rd)$ 
from \eqref{def_characteristic_function}, we have
\begin{align*}
\lim_{\varepsilon\downarrow 0}&\overline{\lim_{n\rightarrow \infty }}\left|n^{\alpha-2}
\la z,Az\ra+n^\alpha\int_{|x|\leq \varepsilon }\la z,x\ra^2\nu(n\,dx)\right|\\
&\leq \lim_{n\rightarrow \infty }n^{\alpha-2}\left|\la z,Az\ra\right|+\lim_{\varepsilon\downarrow 0}
\int_{|x|\leq \varepsilon }\la z,x\ra^2\nu(dx)=0.
\end{align*}
Then, it follows from Theorem 8.7 of \citet{Sato's_book1999} that $\lim_{n\to\infty}\wh\mu
\left(n^{-1}z\right)^{n^{\alpha}}e^{-i\la c_n,z\ra}=1$. Thus 
$$
\prod_{j=1}^n\wh\mu_j(a_n^{-1}z)^{p_n}e^{i\la c_n,z\ra}
=\frac
{\wh\mu\left(\frac{n+1}{n}z\right)^{\left(\frac{n}{n+1}\right)^{\alpha}}}
{\wh\mu\left(\frac{1}{n}z\right)^{n^{\alpha}}e^{-i\la c_n,z\ra}}
\to\wh\mu(z),
$$
as $n\to\infty$.
\end{proof}
Corollaries \ref{alpha>=2}, \ref{0<alpha<=2} and Theorem \ref{limit_theorem2} yield the following.
\begin{cor}\label{Q_alpha(H) H}
\begin{enumerate}[(i)]
\item Let $\alpha\in(-\infty,0]\cup(2,\infty)$. Then, for all c.c.s.s.\,$H\subset I(\rd)$, 
$\mathfrak Q_\alpha(H)\subset H$.
\item Let $\alpha\in(0,2]$. Then, there exists a c.c.s.s.\,$H\subset I(\rd)$ satisfying $\mathfrak Q_\alpha(H)\not\subset H$. 
\item Let $\alpha\in(0,2)$. Then, for all c.c.s.s.\,$H\subset I(\rd)$, 
$\mathfrak Q_\alpha(H)\cap\mathcal C_\alpha(\rd)\subset H$.
\end{enumerate}
\end{cor}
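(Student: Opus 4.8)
The plan is to deduce each of the three parts from one of the three results named in the lead-in sentence, the recurring engine being that the c.c.s.s.\ closure of $H$ lets one push finite approximants into $H$. Concretely, I would first isolate the following bookkeeping observation. If $\{\mu_j,j\in\N\}\subset H$ and $a_n>0$, $c_n\in\rd$, $p_n>0$, then for each fixed $n$ the distribution with characteristic function $\prod_{j=1}^n\wh\mu_j(a_n^{-1}z)^{p_n}e^{i\la c_n,z\ra}$ belongs to $H$: indeed $\wh\mu_j(\cdot)^{p_n}$ is the characteristic function of the $p_n$-th convolution of $\mu_j\in H$, scaling the argument by $a_n^{-1}$ is type equivalence, the product over $j$ is a convolution, and multiplying by $e^{i\la c_n,z\ra}$ is a further shift; every one of these preserves membership in a c.c.s.s.\ class. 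Since $H$ is also closed under weak convergence, any weak limit of such distributions again lies in $H$.

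For part (iii), take $\mu\in\mathfrak{Q}_\alpha(H)\cap\mathcal{C}_\alpha(\rd)$ with $0<\alpha<2$. Theorem~\ref{limit_theorem2} then applies and produces an \emph{entire} sequence $\{\mu_j,j\in\N\}\subset H$ together with admissible $a_n,c_n,p_n$ realizing $\mu$ as the limit in \eqref{limit}. The observation above forces $\mu\in H$, which is exactly part (iii). The same argument yields the case $\alpha\le 0$ of part (i): there $\mathcal{C}_\alpha(\rd)=I(\rd)$, and since $\mathfrak{Q}_\alpha(H)\subset I(\rd)$ by Theorem~\ref{characterization_c.f.}(i), one has $\mathfrak{Q}_\alpha(H)=\mathfrak{Q}_\alpha(H)\cap\mathcal{C}_\alpha(\rd)\subset H$.

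The remaining case $\alpha>2$ of part (i) is handled directly by Corollary~\ref{alpha>=2}: for nonempty c.c.s.s.\ $H$ it identifies $\mathfrak{Q}_\alpha(H)$ with the class of all $\delta$-distributions, and every $\delta$-distribution already lies in $H$ (pick any $\mu\in H$ and let $n\to\infty$ in $\mu^{1/n}*\delta_\gamma\to\delta_\gamma$, using closure under $t$-th convolution, type equivalence, and weak convergence), while the case $H=\emptyset$ is vacuous. For part (ii) I would exhibit the single witness $H=\{\delta_\gamma\colon\gamma\in\rd\}$, which is readily seen to be c.c.s.s., and invoke Corollary~\ref{0<alpha<=2} to get $\mathfrak{Q}_\alpha(H)=S_\alpha(\rd)$ for $0<\alpha\le 2$; since $S_\alpha(\rd)$ contains nondegenerate stable laws, $\mathfrak{Q}_\alpha(H)\not\subset H$.

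The one genuinely delicate point — and the reason the parts split along the value of $\alpha$ — is that the scheme of the first paragraph needs the \emph{full} sequence $\{\mu_j\}$ in $H$, whereas Definition~\ref{limit_theorem} only guarantees $\{\mu_j,j\ge j_0\}\subset H$, so the finitely many early terms may fall outside $H$ and the finite approximants need not lie in $H$ without extra input. The hypothesis $\mu\in\mathcal{C}_\alpha(\rd)$ (automatic when $\alpha\le 0$) is exactly what upgrades the representation to Theorem~\ref{limit_theorem2}, where the whole sequence sits in $H$; for $0<\alpha<2$ this upgrade can fail outside $\mathcal{C}_\alpha(\rd)$, and part (ii) shows the inclusion then genuinely breaks down. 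Accordingly the main work is not new analysis but matching each regime of $\alpha$ to the correct cited result and verifying the c.c.s.s.\ bookkeeping of the first paragraph.
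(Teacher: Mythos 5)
Your proposal is correct and follows essentially the same route as the paper: part (iii) and the case $\alpha\le 0$ of part (i) via Theorem \ref{limit_theorem2} (with $\mathcal C_\alpha(\rd)=I(\rd)$ when $\alpha\le 0$), the case $\alpha>2$ via Corollary \ref{alpha>=2} together with the empty-$H$ remark, and part (ii) via Corollary \ref{0<alpha<=2} with $H=\{\delta_\gamma\colon\gamma\in\rd\}$. The only difference is presentational: you spell out the c.c.s.s.\ bookkeeping (finite approximants lie in $H$, then weak closure) that the paper leaves implicit when it cites Theorem \ref{limit_theorem2}, which is a fair and accurate filling-in rather than a new argument.
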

\begin{proof}
(i) If $\alpha\leq 0$, and $H\subset I(\rd)$ is c.c.s.s., then Theorem \ref{limit_theorem2} implies $\mathfrak Q_\alpha(H)\subset H$. 
Let $\alpha> 2$ and let $H\subset I(\rd)$ be c.c.s.s. If $H=\emptyset$, then $\mathfrak Q_\alpha(\emptyset)=\emptyset$. 
Assume $H\neq\emptyset$. Then Corollary \ref{alpha>=2} implies $\mathfrak Q_\alpha(H)=\{\delta_\gamma\colon 
\gamma\in\rd\}\subset H$.

(ii) See Corollary \ref{0<alpha<=2}.

(iii) See Theorem \ref{limit_theorem2}.
\end{proof}
We are ready to define nested subclasses of $L^{\la\al\ra}(\rd)$ by using the $\mathfrak Q_{\al}(\cdot)$-operation.
Let $H\subset I(\rd)$ and $\alpha\in\R$. For $m=0,1,2,\dots,\infty$, we denote the $m$ times iteration of 
$\mathfrak Q_\alpha(\cdot )$ by $\mathfrak Q_\alpha^m(\cdot)$, namely,
$$
\mathfrak Q_\alpha^m(H)=\underbrace{\mathfrak Q_\alpha(\mathfrak Q_\alpha(\cdots (\mathfrak Q_\alpha}_{m}(H))\cdots)),
$$
where $\mathfrak Q_\alpha^0(H)=H$, and $\mathfrak Q_\alpha^\infty(H)=\bigcap_{m=1}^\infty \mathfrak Q_\alpha^m(H)$.
By Corollary \ref{Q_alpha(H) H} (ii), it is not always true that
$\mathfrak Q_\alpha^1(H)\subset \mathfrak Q_\alpha^0(H)(=H)$.
However, it will be seen in Proposition \ref{Q_alpha^m_property} (iii) that if $H\subset I(\rd)$ is c.c.s.s., 
then $\mathfrak Q_\alpha^{m+1}(H)\subset \mathfrak Q_\alpha^m(H), m\in\N$, so that 
$\lim_{m\to\infty}\mathfrak Q_\alpha^m(H)=\bigcap_{m=1}^\infty \mathfrak Q_\alpha^m(H)$,
if we regard $\mathfrak Q_\alpha^m(H)$ as a sequence with $m\in\N$.

For $0<\alpha<2$, let
$$
I_\alpha(\rd):=
\left\{ \mu\in I(\rd)\colon
 \int_{\rd}|x|^\alpha \mu(dx)<\infty\right\}.
$$
We first prepare the following lemma.
\begin{lem}\label{rho_b in I_alpha}
Let $0<\alpha<2$. Suppose $\mu\in L^{\la\alpha\ra}(\rd)$. Then, for all $b>1$, $\rho_b$ in Definition 
\ref{alpha-selfdecomposable} satisfies $\rho_b\in I_\alpha(\rd)$.
\end{lem}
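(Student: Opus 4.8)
The plan is to reduce the claim to a tail estimate on the L\'evy measure of $\rho_b$ and then to exploit the cancellation that is already built into the relation \eqref{def_characteristic_function}. First I would invoke the standard moment criterion for infinitely divisible distributions: for $\sigma\in I(\rd)$ with L\'evy measure $\lambda$ and any $\alpha>0$, one has $\int_{|x|>1}|x|^\alpha\sigma(dx)<\infty$ if and only if $\int_{|x|>1}|x|^\alpha\lambda(dx)<\infty$ (apply, e.g., Theorem 25.3 of \citet{Sato's_book1999} to the submultiplicative function $(1\vee|x|)^\alpha$). Since $\int_{|x|\le1}|x|^\alpha\rho_b(dx)\le1$ trivially, proving $\rho_b\in I_\alpha(\rd)$ reduces to showing $\int_{|x|>1}|x|^\alpha\nu_{\rho_b}(dx)<\infty$, where $\nu_{\rho_b}$ denotes the L\'evy measure of $\rho_b$.

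Next I would read off $\nu_{\rho_b}$ from \eqref{def_characteristic_function}. Passing to cumulant functions gives $C_\mu(z)=b^\alpha C_\mu(b^{-1}z)+C_{\rho_b}(z)$; since L\'evy measures add under convolution and the scaling $x\mapsto b^{-1}x$ turns the L\'evy measure $\nu$ of $\mu$ into the measure $B\mapsto\nu(bB)$, comparison of L\'evy measures yields $\nu_{\rho_b}(dx)=\nu(dx)-b^\alpha\nu(b\,dx)$, where $\nu(b\,dx)$ is the measure $B\mapsto\nu(bB)$. This difference is a nonnegative measure exactly because of the inequality $\nu(B)\ge b^\alpha\nu(bB)$ already observed in the proof of Theorem \ref{limit_theorem2}.

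The computation I would then carry out is a telescoping cancellation performed at finite scale. For $R>b$ the quantity $\int_{1<|x|\le R}|x|^\alpha\nu_{\rho_b}(dx)$ is finite, because $\nu_{\rho_b}$ is finite on $\{|x|>1\}$ and the integrand is bounded there, so I may split it as $\int_{1<|x|\le R}|x|^\alpha\nu(dx)-b^\alpha\int_{1<|x|\le R}|x|^\alpha\nu(b\,dx)$. The substitution $y=bx$ rewrites the second integral as $\int_{b<|x|\le bR}|x|^\alpha\nu(dx)$, so writing $F(a,c):=\int_{a<|x|\le c}|x|^\alpha\nu(dx)$ the expression becomes $F(1,R)-F(b,bR)=F(1,b)-F(R,bR)\le F(1,b)=\int_{1<|x|\le b}|x|^\alpha\nu(dx)$, which is at most $b^\alpha\nu(\{1<|x|\le b\})<\infty$. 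Letting $R\to\infty$ by monotone convergence gives $\int_{|x|>1}|x|^\alpha\nu_{\rho_b}(dx)\le\int_{1<|x|\le b}|x|^\alpha\nu(dx)<\infty$, which by the first step is precisely $\rho_b\in I_\alpha(\rd)$.

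The step I expect to be the \emph{main obstacle} is this cancellation. One is tempted to write $\int_{|x|>1}|x|^\alpha\nu_{\rho_b}(dx)=\int_{|x|>1}|x|^\alpha\nu(dx)-\int_{|x|>b}|x|^\alpha\nu(dx)$ and cancel, but for $\mu\in L^{\la\alpha\ra}(\rd)$ (take $\mu$ to be $\alpha$-stable, say) both integrals on the right are typically infinite, so the subtraction is meaningless. Truncating at radius $R$, where every integral in sight is genuinely finite, and only then telescoping before passing to the limit $R\to\infty$ is what makes the argument rigorous; the finiteness of $\nu$ on $\{|x|>1\}$ is exactly what guarantees the truncated integrals are finite to begin with.
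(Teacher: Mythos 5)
Your proof is correct and follows essentially the same route as the paper's: both read off $\nu_{\rho_b}(B)=\nu(B)-b^\alpha\nu(bB)$ for $B\in\B_0(\rd)$ from \eqref{def_characteristic_function}, exploit the resulting telescoping cancellation to bound $\int_{|x|>1}|x|^\alpha\nu_{\rho_b}(dx)$ by $b^\alpha\nu\left((1,b]S\right)<\infty$, and conclude via the moment criterion in Chapter 25 of \citet{Sato's_book1999}. The only difference is bookkeeping: the paper telescopes a series over the $b$-adic annuli $(b^k,b^{k+1}]S$ after bounding $|x|^\alpha\le b^{\alpha(k+1)}$ on each, whereas you truncate at radius $R$ and telescope the truncated moment integrals exactly before letting $R\to\infty$ --- the same cancellation, organized slightly differently.
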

\begin{proof}
Let $b>1$. Denoting the L\'evy measures of $\mu$ and $\rho_b$ by $\nu$ and $\nu_b$, respectively, we have that
$\nu_b(B)=\nu(B)-b^\alpha\nu(bB)$ for $B\in\B_0(\rd)$ by \eqref{def_characteristic_function}.
Then, it follows that
\begin{align*}
\int_{|x|>1}|x|^\alpha\nu_b(dx)
&=\sum_{k=0}^\infty\int_{|x|\in (b^k,\,b^{k+1}]}|x|^\alpha\nu_b(dx)\leq 
\sum_{k=0}^\infty b^{\alpha(k+1)}\nu_b\left((b^k,\,b^{k+1}]S\right)\\
&=\sum_{k=0}^\infty b^{\alpha(k+1)}\left\{\nu\left((b^k,b^{k+1}]S\right)-b^\alpha\nu
\left((b^{k+1},b^{k+2}]S\right)\right\}\\
&=\sum_{k=0}^\infty \left\{b^{\alpha(k+1)}\nu\left((b^k,b^{k+1}]S\right)-b^{\alpha(k+2)}\nu
\left((b^{k+1},b^{k+2}]S\right)\right\}\\
&=\lim_{n\rightarrow \infty }\left\{b^{\alpha}\nu\left((1,b]S\right)-b^{\alpha(n+2)}\nu
\left((b^{n+1},b^{n+2}]S\right)\right\}\\
&\leq b^{\alpha}\nu\left((1,b]S\right)<\infty.
\end{align*}
This implies $\mu\in I_\alpha(\rd)$, due to Corollary 25.8 of \citet{Sato's_book1999}.
\end{proof}
We now prove several properties of $\mathfrak Q_\alpha^m(H)$.
\begin{prop}\label{Q_alpha^m_property}
Let $H\subset I(\rd)$ be c.c.s.s. Then, we have the following.
\begin{enumerate}[(i)]
\item For $\alpha\in\R$ and $m\in\{0,1,2,\dots,\infty\}$, $\mathfrak Q_\alpha^m(H)$ is also c.c.s.s.
\item For $\alpha\in\R$ and $m\in\Z_+$, $\mu\in \mathfrak Q_\alpha^{m+1}(H)$ if and only if $\mu\in I(\rd)$ and 
for each $b>1$ there exists 
$\rho_b\in \mathfrak Q_\alpha^m(H)$ satisfying
\eqref{def_characteristic_function}.
\item Let $\alpha\in\R$. Then, $\mathfrak Q_\alpha^m(H)$ is decreasing in $m\in\N$ 
with respect to set inclusion, namely,
\begin{equation}\label{decreasing in m}
\mathfrak Q_\alpha^{m}(H)\supset \mathfrak Q_\alpha^{m+1}(H)\quad\text{for }m\in \N.
\end{equation}
\item Let $\alpha\in\R$. Then $\mathfrak Q_\alpha^\infty(H)$ is invariant under 
the $\mathfrak Q_\alpha^\infty(\cdot)$-operation, that is, 
$$
\mathfrak Q_\alpha\left(\mathfrak Q_\alpha^\infty(H)\right)=\mathfrak Q_\alpha^\infty(H),
$$
which is equivalent to that $\mu\in \mathfrak Q_\alpha^\infty(H)$ if and only if $\mu\in I(\rd)$ and for each $b>1$ there exists 
$\rho_b\in \mathfrak Q_\alpha^\infty(H)$ satisfying
\eqref{def_characteristic_function}.
\item Let $m\in\{0,1,2,\dots,\infty\}$. If $\mathfrak Q_\alpha(H)\subset H$ for all $\alpha\in(0,2]$, 
then $\mathfrak Q_\alpha^m(H)$ is decreasing in $\alpha\in\R$ with respect to set inclusion, namely,
\begin{equation}\label{decreasing in alpha}
\mathfrak Q_{\alpha_1}^{m}(H)\supset \mathfrak Q_{\alpha_2}^{m}(H)\quad\text{for }\alpha_1<\alpha_2.
\end{equation}
\end{enumerate}
\end{prop}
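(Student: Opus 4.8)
The plan is to prove the five parts in order, with part \textup{(i)} carrying the structural weight and the remaining parts feeding off it. The engine throughout is the decomposability characterization of Theorem \ref{characterization_c.f.}\,(i): whenever $K\subset I(\rd)$ is c.c.s.s., membership in $\mathfrak{Q}_\alpha(K)$ is equivalent to being infinitely divisible and admitting, for every $b>1$, a factor $\rho_b\in K$ obeying \eqref{def_characteristic_function}. For \textup{(i)} I would induct on $m$, the crux being the one-step claim that $\mathfrak{Q}_\alpha(K)$ is c.c.s.s.\ whenever $K$ is. Using $\wh\mu(z)=\wh\mu(b^{-1}z)^{b^\alpha}\wh\rho_b(z)$, closure of $\mathfrak{Q}_\alpha(K)$ under convolution, $t$-th convolution and type equivalence each reduces to applying the matching closure property of $K$ to the factor $\rho_b$ (for type equivalence the factor is replaced by a type-equivalent element times a deterministic shift; for $t$-th convolution by its $t$-th convolution). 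For weak convergence I would use that a limit $\mu$ of elements of $\mathfrak{Q}_\alpha(K)$ lies in $I(\rd)$, so $\wh\mu$ never vanishes; hence $\wh\rho_b^{(n)}=\wh\mu_n/\wh\mu_n(b^{-1}\cdot)^{b^\alpha}$ converges to a function continuous at $0$, and L\'evy's continuity theorem together with weak closure of $K$ supply the factor. The case $m=\infty$ follows since an intersection of c.c.s.s.\ classes is c.c.s.s. Part \textup{(ii)} is then immediate: as $\mathfrak{Q}_\alpha^m(H)$ is c.c.s.s.\ by \textup{(i)}, Theorem \ref{characterization_c.f.}\,(i) applied with $K=\mathfrak{Q}_\alpha^m(H)$ is precisely the assertion.

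The first genuine obstacle is \textup{(iii)}, because for $0<\alpha\le 2$ one does \emph{not} have $\mathfrak{Q}_\alpha(H)\subset H$ in general (Corollary \ref{Q_alpha(H) H}\,(ii)), which is exactly why the nesting is claimed only for $m\in\N$. I would split on $\alpha$. For $\alpha\in(-\infty,0]\cup(2,\infty)$, Corollary \ref{Q_alpha(H) H}\,(i) gives $\mathfrak{Q}_\alpha(H)\subset H$, and Lemma \ref{Inclusion} upgrades this to $\mathfrak{Q}_\alpha^{m+1}(H)\subset\mathfrak{Q}_\alpha^m(H)$ for every $m\ge 0$; for $\alpha=2$, Corollary \ref{alpha>=2} shows $\mathfrak{Q}_2^m(H)$ is the Gaussian class for all $m\ge 1$, so the sequence is eventually constant. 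The delicate range $0<\alpha<2$ rests on the base inclusion $\mathfrak{Q}_\alpha^2(H)\subset\mathfrak{Q}_\alpha(H)$. Writing $K=\mathfrak{Q}_\alpha(H)$ and taking $\mu\in\mathfrak{Q}_\alpha(K)$, part \textup{(ii)} yields a factor $\rho_b\in K$, while $\mu\in\mathfrak{Q}_\alpha(K)\subset\mathfrak{Q}_\alpha(I(\rd))=L^{\la\alpha\ra}(\rd)$; then Lemma \ref{rho_b in I_alpha} forces the (uniquely determined) factor into $I_\alpha(\rd)\subset\mathcal{C}_\alpha(\rd)$, so $\rho_b\in\mathfrak{Q}_\alpha(H)\cap\mathcal{C}_\alpha(\rd)\subset H$ by Corollary \ref{Q_alpha(H) H}\,(iii), whence $\mu\in\mathfrak{Q}_\alpha(H)$. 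Lemma \ref{Inclusion} then propagates $\mathfrak{Q}_\alpha^2(H)\subset\mathfrak{Q}_\alpha^1(H)$ to all $m\in\N$. I would record separately the elementary inclusion $I_\alpha(\rd)\subset\mathcal{C}_\alpha(\rd)$ (a finite $\alpha$-moment of the L\'evy tail gives $r^\alpha\int_{|x|>r}\nu(dx)\to 0$).

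Part \textup{(iv)} is a soft consequence of \textup{(i)}, \textup{(iii)} and uniqueness of the factor. For the inclusion $\mathfrak{Q}_\alpha(\mathfrak{Q}_\alpha^\infty(H))\subset\mathfrak{Q}_\alpha^\infty(H)$, Lemma \ref{Inclusion} gives $\mathfrak{Q}_\alpha(\mathfrak{Q}_\alpha^\infty(H))\subset\mathfrak{Q}_\alpha^{m+1}(H)$ for every $m$, and by the monotonicity from \textup{(iii)} the intersection over $m$ is again $\mathfrak{Q}_\alpha^\infty(H)$. For the reverse inclusion, if $\mu\in\mathfrak{Q}_\alpha^\infty(H)$ then for each $b$ the factor $\rho_b=\wh\mu/\wh\mu(b^{-1}\cdot)^{b^\alpha}$ is \emph{the same} at every level and lies in $\mathfrak{Q}_\alpha^{k}(H)$ for all $k$, hence in $\mathfrak{Q}_\alpha^\infty(H)$; since that class is c.c.s.s.\ by \textup{(i)}, Theorem \ref{characterization_c.f.}\,(i) gives $\mu\in\mathfrak{Q}_\alpha(\mathfrak{Q}_\alpha^\infty(H))$. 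The stated equivalence is then just Theorem \ref{characterization_c.f.}\,(i) read off for $K=\mathfrak{Q}_\alpha^\infty(H)$.

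The second obstacle is \textup{(v)}. The hypothesis plus Corollary \ref{Q_alpha(H) H}\,(i) yields $\mathfrak{Q}_\alpha(H)\subset H$ for \emph{all} $\alpha\in\R$. The one-step mechanism I would isolate is a sub-lemma: for c.c.s.s.\ $K$ and $\alpha_1<\alpha_2$, every $\mu\in\mathfrak{Q}_{\alpha_2}(K)\cap K$ lies in $\mathfrak{Q}_{\alpha_1}(K)$, obtained by replacing the $\alpha_2$-factor $\rho_b$ with $(\mu_b)^{\,b^{\alpha_2}-b^{\alpha_1}}*\rho_b$, where $\mu_b$ is the image of $\mu$ under $x\mapsto b^{-1}x$ and the positive exponent keeps the product in $K$. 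To iterate, I must know that each $\mathfrak{Q}_{\alpha_1}^m(H)$ is stable under $\mathfrak{Q}_\beta$ for every $\beta\ge\alpha_1$; I would establish this by a separate induction on $m$, using at each step that $\mu\in\mathfrak{Q}_\beta(\mathfrak{Q}_{\alpha_1}^m(H))$ already lies in $\mathfrak{Q}_{\alpha_1}^m(H)$ (via Lemma \ref{Inclusion} and the inductive stability) and then lowering the exponent from $\beta$ to $\alpha_1$ by the same convolution trick. With this stability, the main induction closes via $\mathfrak{Q}_{\alpha_2}^{m+1}(H)\subset\mathfrak{Q}_{\alpha_2}(\mathfrak{Q}_{\alpha_1}^m(H))\subset\mathfrak{Q}_{\alpha_1}^{m+1}(H)$, the first step by Lemma \ref{Inclusion} and the induction hypothesis, the second by the sub-lemma with $K=\mathfrak{Q}_{\alpha_1}^m(H)$ (its relevant elements lying in $K$ by the stability), and the case $m=\infty$ by intersecting. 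I expect the bookkeeping of this stability-under-$\mathfrak{Q}_\beta$ induction, and the care required because the na\"ive nesting fails for $0<\alpha\le 2$, to be the most delicate points.
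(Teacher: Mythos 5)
Your proposal is correct and follows essentially the same route as the paper's proof: the same reliance on Theorem \ref{characterization_c.f.}\,(i) for parts (i)--(ii), the same uniqueness-of-the-factor argument for (iv), the same use of Lemma \ref{rho_b in I_alpha} together with Corollary \ref{Q_alpha(H) H}\,(iii) for the delicate range $0<\alpha<2$ in (iii), and the same convolution trick $\wh\mu(b^{-1}z)^{b^{\alpha_2}-b^{\alpha_1}}\wh\rho_b(z)$ for (v). The only differences are organizational (you prove the base inclusion $\mathfrak Q_\alpha^2(H)\subset\mathfrak Q_\alpha(H)$ and propagate it by Lemma \ref{Inclusion}, and you package (v) into a separate stability-under-$\mathfrak Q_\beta$ induction, whereas the paper threads the needed containments through (iii) and the main induction hypothesis), plus the trivial omission of the $H=\emptyset$ case when $\alpha=2$.
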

\begin{proof}
(i) Let us show the statement for $m\in\Z_+$ by induction. The case for $m=0$ is obvious.
Assume that $\mathfrak Q_\alpha^{m-1}(H)$ is c.c.s.s. 
Then, Theorem \ref{characterization_c.f.} (i) yields that
$\mu\in\mathfrak Q_\alpha^m(H)$ if and only if 
$\mu\in I(\rd)$ and
for each $b>1$ there exists $\rho_b\in\mathfrak Q_\alpha^{m-1}(H)$ satisfying
\eqref{def_characteristic_function}.
By using this decomposability, it is easy to see that $\mathfrak Q_\alpha^m(H)$ is c.c.s.s.
Thus $\mathfrak Q_\alpha^m(H)$ is c.c.s.s. for all $m\in\Z_+$.
Recalling that the intersection of c.c.s.s.\,classes is again c.c.s.s., we have the assertion for $m=\infty$.

(ii) Noting (i), we can apply Theorem \ref{characterization_c.f.} to the class $\mathfrak Q_\alpha^m(H)$ in place of $H$.

(iii) We first show the case for $\alpha\in(-\infty,0]\cup(2,\infty)$. It follows from Corollary \ref{Q_alpha(H) H} that 
$\mathfrak Q_\alpha(H)\subset H$. Then Lemma \ref{Inclusion} yields \eqref{decreasing in m}.
We next show the case for $\alpha\in(0,2)$. Suppose that $m\in\N$ and $\mu\in \mathfrak Q_\alpha^{m+1}(H)$. 
Then it follows from (ii) that
for each $b>1$ there exists $\rho_b\in \mathfrak Q_\alpha^m(H)$ satisfying
\eqref{def_characteristic_function}.
Then $\mu\in L^{\la\alpha\ra}(\rd)$ and hence $\rho_b\in I_\alpha(\rd)\subset\mathcal C_\alpha(\rd)$
by Lemma \ref{rho_b in I_alpha}.
Therefore $\rho_b\in \mathfrak Q_\alpha^m(H)\cap \mathcal C_\alpha(\rd)\subset\mathfrak Q_\alpha^{m-1}(H)$ 
by Corollary \ref{Q_alpha(H) H}.
Then it follows from (ii) that $\mu\in\mathfrak Q_\alpha^m(H)$. Thus \eqref{decreasing in m} holds.
We finally show the case for $\alpha=2$.
If $H=\emptyset$, then $\mathfrak Q_\alpha^m(H)=\emptyset$ for $m\in\N$ and thus \eqref{decreasing in m} is true. 
Let $H\neq\emptyset$.
It is sufficient to show that 
\begin{equation}\label{Gauss}
\text{for all $m\in\N$, $\mathfrak Q_2^m(H)$ is the class of all Gaussian distributions.}
\end{equation}
Let us show this statement by induction. If $m=1$, the assertion is Corollary \ref{alpha>=2}.
Assume that the assertion is valid for $m$.
Then $\mathfrak Q_2^{m+1}(H)=\mathfrak Q_2\left(\mathfrak Q_2^m(H)\right)=\mathfrak Q_2
\left(\{\mu\in \mathscr P(\rd) \colon\text{\(\mu\) is Gaussian}\}\right)$, which is equal to the class of all Gaussian distributions 
on $\rd$ by Corollary \ref{alpha>=2}.
Then the statement is true for $m+1$. 
Therefore the statement is true for all $m\in\N$.

(iv) It follows from (iii) that $\mathfrak Q_\alpha^{m}(H)\supset \mathfrak Q_\alpha^\infty(H)$ for all $m\in\N$. 
Then, Lemma \ref{Inclusion} entails that $\mathfrak Q_\alpha^{m+1}(H)\supset \mathfrak Q_\alpha
\left(\mathfrak Q_\alpha^\infty(H)\right)$ 
for all $m\in\N$. 
Therefore $\mathfrak Q_\alpha^\infty(H)\supset \mathfrak Q_\alpha\left(\mathfrak Q_\alpha^\infty(H)\right)$.
To prove the converse inclusion, let $\mu\in \mathfrak Q_\alpha^\infty(H)$. 
Then $\mu\in \mathfrak Q_\alpha^{m+1}(H)$ for all $m\in\Z_+$.
Therefore it follows from (ii) that for any $b>1$ there exists $\rho_{m,b}\in \mathfrak Q_\alpha^m(H)$ such that 
$\widehat{\mu}(z)=\widehat{\mu}(b^{-1}z)^{b^\alpha}\widehat{\rho}_{m,b}(z)$. 
Since $\mu\in I(\rd)$, $\widehat{\mu}(b^{-1}z)^{b^\alpha}$ does not vanish. 
Therefore $\widehat{\rho}_{m,b}(z)=\widehat{\mu}(z)/\widehat{\mu}(b^{-1}z)^{b^\alpha}$, which is independent of $m$. 
Denoting it by $\widehat{\rho}_{\infty,b}(z)$, we have $\rho_{\infty,b}\in \mathfrak Q_\alpha^m(H)$ for all $m\in\Z_+$, 
namely, $\rho_{\infty,b}\in\bigcap_{m=0}^\infty\mathfrak Q_\alpha^m(H)\subset\mathfrak Q_\alpha^\infty(H)$. 
Then $\mu\in \mathfrak Q_\alpha\left(\mathfrak Q_\alpha^\infty(H)\right)$ by Theorem \ref{characterization_c.f.}. 
Hence $\mathfrak Q_\alpha^\infty(H)\subset \mathfrak Q_\alpha\left(\mathfrak Q_\alpha^\infty(H)\right)$.

(v) Note that $\mathfrak Q_\alpha(H)\subset H$ for all $\alpha\in\R$
by Corollary \ref{Q_alpha(H) H} (i) and the assumption.
Let us show the statement for $m\in\Z_+$ by induction.
The case for $m=0$ is trivial.
Assume that the assertion is valid for $m-1$.
If $\mu\in \mathfrak Q_{\alpha_2}^m(H)$, then, by (ii), for each $b>1$ there exists $\rho_b\in \mathfrak Q_{\alpha_2}^{m-1}(H)$ 
satisfying \eqref{def_characteristic_function} 
for $\alpha_2$ in place of $\alpha$.
Noting that $b^{\alpha_2}-b^{\alpha_1}>0$, we have
$$
\widehat{\mu}(z)=\widehat{\mu}(b^{-1}z)^{b^{\alpha_1}}\left\{\widehat{\mu}(b^{-1}z)^{b^{\alpha_2}-b^{\alpha_1}}
\widehat\rho_b(z)\right\}.
$$
By (iii), we have $\mu\in \mathfrak Q_{\alpha_2}^m(H)\subset \mathfrak Q_{\alpha_2}^{m-1}(H)$.
Then, the assumption of induction entails that $\mu,\rho_b\in\mathfrak Q_{\alpha_2}^{m-1}(H)\subset 
\mathfrak Q_{\alpha_1}^{m-1}(H)$.
Since $\mathfrak Q_{\alpha_1}^{m-1}(H)$ is c.c.s.s. from (i),
the distribution with characteristic function $\widehat{\mu}(b^{-1}z)^{b^{\alpha_2}-b^{\alpha_1}}\widehat\rho_b(z)$ 
also belongs to $\mathfrak Q_{\alpha_1}^{m-1}(H)$. Hence $\mu\in \mathfrak Q_{\alpha_1}(H)$ by virtue of (ii).
Therefore the statement is true for all $m\in\Z_+$.
Taking the intersection under $m\in\N$ of the both sides of \eqref{decreasing in alpha}, we have 
the assertion for $m=\infty$.
\end{proof}
For $H\subset \mathscr P(\rd)$, we write $\overline{H}$ for the closure of $H$ under weak convergence and convolution.
Some facts related to the class of stable distributions are the following.
\begin{prop}\label{supset Q_alpha}
Let $H\subset I(\rd)$ be c.c.s.s. and $m\in\{1,2,\dots,\infty\}$.
\begin{enumerate}[(i)]
\item If $\alpha\leq 0$ and $H\supset S(\rd)$, then $\mathfrak Q_\alpha^{m}(H)\supset \overline{S(\rd)}$.
\item If $0<\alpha<2$ and $H\supset \bigcup_{\beta\in[\alpha,2]}S_\beta(\rd)$, then $\mathfrak Q_\alpha^{m}(H)\supset 
\overline{\bigcup_{\beta\in[\alpha,2]}S_\beta(\rd)}$.
\item If $\alpha=2$ and $H\neq\emptyset$, then $\mathfrak Q_2^{m}(H)$ is the class of all Gaussian distributions.
\item If $\alpha>2$ and $H\neq\emptyset$, then $\mathfrak Q_\alpha^{m}(H)$ is the class of all $\delta$-distributions.
\end{enumerate}
\end{prop}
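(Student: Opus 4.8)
The plan is to treat the four parts by two methods: parts (iii) and (iv) follow quickly from Corollary \ref{alpha>=2} by induction on $m$, while parts (i) and (ii) rest on a single observation about how $\mathfrak Q_\alpha$ acts on stable laws. Throughout I would use that each $\mathfrak Q_\alpha^m(H)$ is c.c.s.s. by Proposition \ref{Q_alpha^m_property} (i) and that membership is characterized by the decomposability \eqref{def_characteristic_function} via Proposition \ref{Q_alpha^m_property} (ii).

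First I would isolate the following claim, which drives (i) and (ii): if $\mu\in S_\beta(\rd)$ with $\beta\geq\alpha$ and $\mu\in\mathfrak Q_\alpha^m(H)$, then $\mu\in\mathfrak Q_\alpha^{m+1}(H)$. To see this, recall the characterization of stability used in the proof of Corollary \ref{0<alpha<=2}, namely that a $\beta$-stable $\mu$ satisfies $\wh\mu(z)=\wh\mu(b^{-1}z)^{b^\beta}e^{i\la c_b,z\ra}$ for each $b>1$. Dividing by $\wh\mu(b^{-1}z)^{b^\alpha}$ (which never vanishes, as $\mu\in I(\rd)$) produces a candidate $\rho_b$ with $\wh\rho_b(z)=\wh\mu(b^{-1}z)^{b^\beta-b^\alpha}e^{i\la c_b,z\ra}$, so that $\wh\mu(z)=\wh\mu(b^{-1}z)^{b^\alpha}\wh\rho_b(z)$, i.e. \eqref{def_characteristic_function} holds. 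Since $\beta\geq\alpha$ forces $b^\beta-b^\alpha\geq0$, this $\rho_b$ is obtained from $\mu$ by successively applying a scaling (type equivalence), a $t$-th convolution with $t=b^\beta-b^\alpha\geq0$, and a translation; as $\mathfrak Q_\alpha^m(H)$ is c.c.s.s. and nonempty, all three operations keep us inside $\mathfrak Q_\alpha^m(H)$ (the boundary case $t=0$ uses only that $\delta_0$, hence every $\delta$-law, lies in a nonempty c.c.s.s. class). By Proposition \ref{Q_alpha^m_property} (ii) this gives $\mu\in\mathfrak Q_\alpha^{m+1}(H)$.

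With this claim in hand I would run an induction on $m$. For (i), $\alpha\leq0$ gives $\beta>\alpha$ for every $\beta\in(0,2]$, and the hypothesis $H\supset S(\rd)$ furnishes the base case $S(\rd)\subset\mathfrak Q_\alpha^0(H)$; the claim then propagates $S(\rd)\subset\mathfrak Q_\alpha^m(H)$ to every $m\in\Z_+$. For (ii), the hypothesis $H\supset\bigcup_{\beta\in[\alpha,2]}S_\beta(\rd)$ starts the induction and the claim (now including the endpoint $\beta=\alpha$) gives $\bigcup_{\beta\in[\alpha,2]}S_\beta(\rd)\subset\mathfrak Q_\alpha^m(H)$ for all $m$. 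In each case, closing under convolution and weak convergence—available because $\mathfrak Q_\alpha^m(H)$ is c.c.s.s.—upgrades these inclusions to $\overline{S(\rd)}\subset\mathfrak Q_\alpha^m(H)$ and $\overline{\bigcup_{\beta\in[\alpha,2]}S_\beta(\rd)}\subset\mathfrak Q_\alpha^m(H)$ for all finite $m\geq1$; intersecting over $m$ settles the case $m=\infty$.

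For parts (iii) and (iv) I would argue directly. The case $\alpha=2$ is already recorded as \eqref{Gauss} in the proof of Proposition \ref{Q_alpha^m_property} (iii): $\mathfrak Q_2^m(H)$ is the Gaussian class for every finite $m$, and the intersection inherits this for $m=\infty$. For $\alpha>2$, Corollary \ref{alpha>=2} gives $\mathfrak Q_\alpha^1(H)=\{\delta_\gamma:\gamma\in\rd\}$; since this set is itself a nonempty c.c.s.s. class, applying Corollary \ref{alpha>=2} again shows $\mathfrak Q_\alpha^{m+1}(H)=\mathfrak Q_\alpha(\{\delta_\gamma:\gamma\in\rd\})=\{\delta_\gamma:\gamma\in\rd\}$, so by induction every $\mathfrak Q_\alpha^m(H)$, and hence $\mathfrak Q_\alpha^\infty(H)$, equals the $\delta$-class. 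I expect the only delicate point to be the bookkeeping in the claim of the second paragraph—verifying that the sign condition $b^\beta-b^\alpha\geq0$ holds across the relevant $\beta$-range and that the degenerate exponent $t=0$ at $\beta=\alpha$ is legitimately absorbed by the c.c.s.s. structure—rather than any genuinely hard estimate.
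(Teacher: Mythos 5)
Your proposal is correct and takes essentially the same route as the paper: for (i)--(ii) you use the identical decomposition $\wh\rho_b(z)=\wh\mu(b^{-1}z)^{b^\beta-b^\alpha}e^{i\la c_b,z\ra}$ and bootstrap it through the c.c.s.s.\ structure of $\mathfrak Q_\alpha^m(H)$ via Proposition \ref{Q_alpha^m_property} (ii), exactly as in the paper's proof, and for (iii)--(iv) you run the same induction on Corollary \ref{alpha>=2} that the paper invokes through \eqref{Gauss}. Your explicit handling of the endpoint $\beta=\alpha$ (where the exponent $b^\beta-b^\alpha$ vanishes and $\rho_b$ degenerates to $\delta_{c_b}$) is a small point of extra care that the paper leaves implicit in its ``proved in a similar way to (i)'' remark for part (ii).
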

\begin{proof}
(i) Let $\mu\in S(\rd)$. 
Then, there exists $\beta\in(0,2]$ such that for each $b>1$ there is $c_b\in\rd$ satisfying 
$\wh\mu(z)=\wh\mu(b^{-1}z)^{b^\beta}e^{i\la c_b,z\ra}$. 
Noting that $\alpha<\beta$ and letting 
\begin{equation}\label{rho_b}
\wh\rho_b(z):=\wh\mu(b^{-1}z)^{b^\beta-b^\alpha}e^{i\la c_b,z\ra},
\end{equation}
we have \eqref{def_characteristic_function}. 
Since $\mu\in S(\rd)\subset H$ and hence $\rho_b\in H$,
it follows that $\mu\in \mathfrak Q_\alpha(H)$.
Then, looking at \eqref{rho_b} and taking into account that 
$\mathfrak Q_\alpha(H)$ is c.c.s.s., 
we have $\rho_b\in \mathfrak Q_\alpha(H)$, which implies $\mu\in \mathfrak Q_\alpha^2(H)$ 
by Proposition \ref{Q_alpha^m_property} (ii).
Iterating this argument, we have $\mu\in \mathfrak Q_\alpha^{m}(H)$ for all $m\in\N$.
Therefore $\mathfrak Q_\alpha^{m}(H)\supset S(\rd)$ for all $m\in\N$.
Since $\mathfrak Q_\alpha^{m}(H)$ is c.c.s.s., it follows that 
$\mathfrak Q_\alpha^{m}(H)\supset \overline{S(\rd)}$ for all $m\in\N$.
Thus $\mathfrak Q_\alpha^\infty(H)=\bigcap_{m=1}^\infty\mathfrak Q_\alpha^{m}(H)\supset \overline{S(\rd)}$.

(ii) It is proved in a similar way to (i).

(iii) For $m\in\N$, what we have to show is \eqref{Gauss} itself, which is already shown.
For $m=\infty$, we have that $\mathfrak Q_2^\infty(H)=\bigcap_{m=1}^\infty\mathfrak Q_2^{m}(H)=
\{\mu\in \mathscr P(\rd) \colon\text{\(\mu\) is Gaussian}\}$.

(iv) For $m\in\N$, the statement can be proved in the same way as that for \eqref{Gauss}.
For $m=\infty$, it is proved in the same way as (iii).
\end{proof}
We now define $L_m^{\la\al\ra}(\rd)$, the nested subclasses of $L^{\la\al\ra}(\rd)$.
Define
$L^{\la\alpha\ra}_m(\rd)$ by $\mathfrak Q_\alpha^{m+1}(I(\rd))$ for 
$\alpha\in\R$ and $m\in\{0,1,2,\dots,\infty\}$.
Take into account that $L^{\la\alpha\ra}_0(\rd)=L^{\la\alpha\ra}(\rd)$.
Noting that $\mathfrak Q_\alpha(I(\rd))\subset I(\rd)$ for all $\alpha\in(0,2]$ and 
$I(\rd)\supset S(\rd)$, we have the following two propositions immediately from Propositions 
\ref{Q_alpha^m_property} and \ref{supset Q_alpha}.
\begin{prop}\label{The properties of L^{<alpha>}_m}
The following hold.
\begin{enumerate}[(i)]
\item For $\alpha\in\R$ and $m\in\{0,1,2,\dots,\infty\}$, $L^{\la\alpha\ra}_m(\rd)$ is c.c.s.s.
\item For $\alpha\in\R$ and $m\in\Z_+$, $\mu\in L^{\la\alpha\ra}_{m+1}(\rd)$ if and only if $\mu\in I(\rd)$ and for each $b>1$
 there exists $\rho_b\in L^{\la\alpha\ra}_m(\rd)$ satisfying
\eqref{def_characteristic_function}.
\item Let $\alpha\in\R$. Then $L^{\la\alpha\ra}_m(\rd)\supset L^{\la\alpha\ra}_{m+1}(\rd)$ for $m\in \Z_+$.
\item Let $\alpha\in\R$. Then,
$\mathfrak Q_\alpha\left(L^{\la\alpha\ra}_\infty(\rd)\right)=L^{\la\alpha\ra}_\infty(\rd)$, namely,
$\mu\in L^{\la\alpha\ra}_\infty(\rd)$ if and only if $\mu\in I(\rd)$ and for each $b>1$ there exists 
$\rho_b\in L^{\la\alpha\ra}_\infty(\rd)$ satisfying
\eqref{def_characteristic_function}.
\item Let $m\in\{0,1,2,\dots,\infty\}$. Then 
$L^{\la\alpha_1\ra}_m(\rd)\supset L^{\la\alpha_2\ra}_m(\rd)$ for $\alpha_1<\alpha_2$.
\end{enumerate}
\end{prop}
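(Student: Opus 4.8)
The plan is to observe that the entire proposition is the specialization of Proposition \ref{Q_alpha^m_property} to the particular choice $H=I(\rd)$, once one tracks the index shift coming from the defining relation $L^{\la\alpha\ra}_m(\rd)=\mathfrak Q_\alpha^{m+1}(I(\rd))$. First I would record the two facts that make this specialization legitimate: that $I(\rd)$ is c.c.s.s. (as noted in Section 2), so $H=I(\rd)$ meets the standing hypothesis of Proposition \ref{Q_alpha^m_property}; and that by Theorem \ref{characterization_c.f.} (i) every class $\mathfrak Q_\alpha(H)$ with $H$ c.c.s.s. consists of infinitely divisible distributions, so that $\mathfrak Q_\alpha(I(\rd))\subset I(\rd)$ for every $\alpha$, in particular for $\alpha\in(0,2]$.

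With these in hand, parts (i)--(iv) follow termwise. For (i) I would apply Proposition \ref{Q_alpha^m_property} (i) to $H=I(\rd)$, noting that $m+1$ ranges over $\{1,2,\dots,\infty\}$ as $m$ ranges over $\{0,1,\dots,\infty\}$, so each $L^{\la\alpha\ra}_m(\rd)=\mathfrak Q_\alpha^{m+1}(I(\rd))$ is c.c.s.s.; the case $m=\infty$ is covered because Proposition \ref{Q_alpha^m_property} (i) includes it. For (ii), reading Proposition \ref{Q_alpha^m_property} (ii) with $H=I(\rd)$ and with the running index $m+1\in\N$ in place of $m$ translates the decomposability relating $\mathfrak Q_\alpha^{m+2}(I(\rd))$ to $\mathfrak Q_\alpha^{m+1}(I(\rd))$ directly into the asserted relation between $L^{\la\alpha\ra}_{m+1}(\rd)$ and $L^{\la\alpha\ra}_m(\rd)$. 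Part (iii) is Proposition \ref{Q_alpha^m_property} (iii) read at index $m+1\in\N$, and part (iv) is Proposition \ref{Q_alpha^m_property} (iv) read with $H=I(\rd)$, recalling $L^{\la\alpha\ra}_\infty(\rd)=\mathfrak Q_\alpha^\infty(I(\rd))$.

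The one part requiring a genuine (though very short) verification is (v), because Proposition \ref{Q_alpha^m_property} (v) carries the hypothesis $\mathfrak Q_\alpha(H)\subset H$ for all $\alpha\in(0,2]$, which the present statement does not mention. Here I would invoke the second recorded fact: since $\mathfrak Q_\alpha(I(\rd))\subset I(\rd)$ by Theorem \ref{characterization_c.f.} (i), the hypothesis is met for $H=I(\rd)$, and Proposition \ref{Q_alpha^m_property} (v) then yields the monotonicity in $\alpha$ of $\mathfrak Q_\alpha^{m+1}(I(\rd))=L^{\la\alpha\ra}_m(\rd)$ for every $m\in\{0,1,\dots,\infty\}$.

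I do not expect a real obstacle: the whole argument is bookkeeping. The one point to state carefully is the uniform index shift $L^{\la\alpha\ra}_m(\rd)=\mathfrak Q_\alpha^{m+1}(I(\rd))$, which carries the admissible ranges of $m$ in Proposition \ref{Q_alpha^m_property} onto the ranges claimed here and, in particular, makes the boundary cases $m=0$ and $m=\infty$ fall out automatically; the hypothesis check for (v) is the only substantive line.
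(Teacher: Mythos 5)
Your proposal is correct and matches the paper's own argument: the paper obtains this proposition immediately by taking $H=I(\rd)$ in Proposition \ref{Q_alpha^m_property}, noting precisely the same key fact you isolate for part (v), namely $\mathfrak Q_\alpha(I(\rd))\subset I(\rd)$ for $\alpha\in(0,2]$ (which indeed follows from Theorem \ref{characterization_c.f.}). The index bookkeeping $L^{\la\alpha\ra}_m(\rd)=\mathfrak Q_\alpha^{m+1}(I(\rd))$ is handled exactly as you describe.
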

\begin{prop}\label{supset}
Let $m\in\{0,1,2,\dots,\infty\}$.
\begin{enumerate}[(i)]
\item If $\alpha\leq 0$, then $L^{\la\alpha\ra}_m(\rd)\supset \overline{S(\rd)}$.
\item If $0<\alpha<2$, then $L^{\la\alpha\ra}_m(\rd)\supset \overline{\bigcup_{\beta\in[\alpha,2]}S_\beta(\rd)}$.
\item If $\alpha=2$, then $L^{\la 2\ra}_m(\rd)$ is the class of all Gaussian distributions.
\item If $\alpha>2$, then $L^{\la\alpha\ra}_m(\rd)$ is the class of all $\delta$-distributions.
\end{enumerate}
\end{prop}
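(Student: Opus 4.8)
The plan is to deduce Proposition \ref{supset} directly from Proposition \ref{supset Q_alpha} by specializing the c.c.s.s.\ class $H$ to $I(\rd)$ and matching the iteration indices. Recall that by definition $L^{\la\alpha\ra}_m(\rd)=\mathfrak Q_\alpha^{m+1}(I(\rd))$, so that for $m\in\{0,1,2,\dots\}$ the relevant exponent $m+1$ ranges over $\{1,2,\dots\}$, and for $m=\infty$ we read $L^{\la\alpha\ra}_\infty(\rd)=\mathfrak Q_\alpha^\infty(I(\rd))$. In every case the exponent lies in $\{1,2,\dots,\infty\}$, which is precisely the range allowed for the index in Proposition \ref{supset Q_alpha}.

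First I would record that $H=I(\rd)$ satisfies the standing hypothesis of Proposition \ref{supset Q_alpha}, namely that $I(\rd)$ is c.c.s.s., as already noted in Section 2. It then remains only to check, case by case, the extra hypothesis imposed on $H$. For part (i), with $\alpha\leq 0$, one needs $I(\rd)\supset S(\rd)$; for part (ii), with $0<\alpha<2$, one needs $I(\rd)\supset\bigcup_{\beta\in[\alpha,2]}S_\beta(\rd)$; and for parts (iii) and (iv) one only needs $I(\rd)\neq\emptyset$. All of these hold trivially, since every stable distribution is infinitely divisible, so $S(\rd)\subset I(\rd)$ and a fortiori $\bigcup_{\beta\in[\alpha,2]}S_\beta(\rd)\subset I(\rd)$, while $I(\rd)$ plainly contains all $\delta$-distributions and is thus nonempty.

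With the hypotheses verified, I would simply invoke Proposition \ref{supset Q_alpha} with $H=I(\rd)$ and with its index taken to be $m+1$ for finite $m$ and $\infty$ for $m=\infty$. Part (i) of that proposition then yields $\mathfrak Q_\alpha^{m+1}(I(\rd))\supset\overline{S(\rd)}$, that is, $L^{\la\alpha\ra}_m(\rd)\supset\overline{S(\rd)}$, which is part (i) here; parts (ii)--(iv) follow in exactly the same way, reading off the corresponding conclusions of Proposition \ref{supset Q_alpha}.

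This argument carries no real obstacle: the entire content is already contained in Proposition \ref{supset Q_alpha}, as the sentence preceding the statement advertises. The only point requiring attention is the bookkeeping, namely tracking the index shift between $L^{\la\alpha\ra}_m(\rd)$ and $\mathfrak Q_\alpha^{m+1}(I(\rd))$, confirming that the case $m=\infty$ is covered by the interpretation $L^{\la\alpha\ra}_\infty(\rd)=\mathfrak Q_\alpha^\infty(I(\rd))$, and observing that the inclusion hypotheses on $H$ become automatic once $H=I(\rd)$.
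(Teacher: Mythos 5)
Your proposal is correct and is exactly the paper's argument: the paper obtains Proposition \ref{supset} ``immediately'' from Proposition \ref{supset Q_alpha} by taking $H=I(\rd)$ (which is c.c.s.s.\ and contains $S(\rd)$, hence all $\bigcup_{\beta\in[\alpha,2]}S_\beta(\rd)$, and is nonempty) and reading the index there as $m+1\in\{1,2,\dots,\infty\}$. Your bookkeeping of the index shift and of the case $m=\infty$ matches the paper's definition $L^{\la\alpha\ra}_m(\rd)=\mathfrak Q_\alpha^{m+1}(I(\rd))$.
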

We next characterize $L_m^{\la\alpha\ra}(\rd)$ in terms of L\'evy measures.  For
$m=0$, 
\linebreak
\citet{MaejimaUeda2009b} proved the following.
\begin{thm}\label{characterization radial}
Let $\alpha<2$. 
Then, $\mu\in I(\rd)$ with L\'evy measure $\nu$ belongs to $L^{\la\alpha\ra}_0(\rd)$ if and only if
$$
\nu(B)=\int_S\lambda(d\xi)\int_0^\infty\1_B(r\xi)r^{-\alpha-1}k_\xi(r)dr,\quad B\in\B(\rd\setminus \{0\}),
$$
where $\lambda$ is a probability measure on $S$ and $k_\xi(r)$ is right-continuous and nonincreasing in $r\in(0,\infty)$ 
and measurable in $\xi\in S$, and for all $\xi\in S$,
$$
\int_0^\infty (r^2\wedge 1)r^{-\alpha-1}k_\xi(r)dr=\int_\rd (|x|^2\wedge 1)\nu(dx),
$$
which is independent of $\xi$.
If $\nu\neq 0$, then this $\lambda$ is uniquely determined by $\nu$, and this $k_\xi(\cdot)$ is uniquely determined by 
$\nu$ up to $\xi$ of $\lambda$-measure $0$.
\end{thm}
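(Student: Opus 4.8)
The plan is to reduce $\alpha$-selfdecomposability to a single scaling inequality for the Lévy measure, and then to read off the radial structure by a disintegration in polar coordinates followed by a logarithmic change of variables. Throughout I write $\mu=\mu_{(A,\nu,\gamma)}$ and work at the level of Lévy--Khintchine triplets.

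First I would record the Lévy-measure form of the defining relation \eqref{def_characteristic_function}. Comparing triplets in $\wh\mu(z)=\wh\mu(b^{-1}z)^{b^\alpha}\wh\rho_b(z)$ forces the Lévy measure of $\rho_b$ to be $\nu_b(B)=\nu(B)-b^\alpha\nu(bB)$ for $B\in\B_0(\rd)$, its Gaussian part to be $(1-b^{\alpha-2})A$, and its drift to be determined by $\gamma$. Hence $\mu\in L^{\la\alpha\ra}_0(\rd)$ if and only if, for every $b>1$, this $\nu_b$ is a nonnegative Lévy measure and $(1-b^{\alpha-2})A$ is nonnegative-definite; since $\alpha<2$ the latter is automatic, so the entire content is the inequality
$$
\nu(B)\geq b^\alpha\nu(bB),\quad B\in\B_0(\rd),\ b>1.
$$

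For the ``if'' part I would substitute the claimed representation and change variables: with $r=bs$ one gets $b^\alpha\nu(bB)=\int_S\lambda(d\xi)\int_0^\infty\1_B(s\xi)s^{-\alpha-1}k_\xi(bs)\,ds$, whence $\nu(B)-b^\alpha\nu(bB)=\int_S\lambda(d\xi)\int_0^\infty\1_B(s\xi)s^{-\alpha-1}\left(k_\xi(s)-k_\xi(bs)\right)ds\geq 0$, because $k_\xi$ is nonincreasing and $bs>s$. Thus $\nu_b$ is nonnegative and dominated by $\nu$, hence a Lévy measure, the Gaussian and drift parts are handled as above, and a matching $\rho_b\in I(\rd)$ exists, giving $\mu\in L^{\la\alpha\ra}_0(\rd)$. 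For the ``only if'' part I would first disintegrate $\nu$ in polar coordinates, $\nu(B)=\int_S\lambda(d\xi)\int_0^\infty\1_B(r\xi)\nu_\xi(dr)$, choosing $\lambda$ to be a probability measure and normalizing the fibres so that $\int_0^\infty(r^2\wedge 1)\nu_\xi(dr)$ is a constant independent of $\xi$, necessarily equal to $\int_\rd(|x|^2\wedge 1)\nu(dx)$; this standard normalization also yields the stated uniqueness of $\lambda$ and of $k_\xi$ up to $\lambda$-null sets. Next I would transfer the displayed inequality to the fibres, obtaining $\nu_\xi(E)\geq b^\alpha\nu_\xi(bE)$ for $\lambda$-a.e.\ $\xi$ and all $b>1$; testing against rational $b$ and a countable generating family of sets $E$, then extending by monotonicity and right-continuity in $b$, makes this hold simultaneously off a single $\lambda$-null set. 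Finally the substitution $u=\log r$ turns $\nu_\xi$ into a measure $\tilde\nu_\xi$ on $\R$ and the inequality into $\tilde\nu_\xi(F)\geq e^{\alpha c}\tilde\nu_\xi(F+c)$ for all $c>0$; setting $m_\xi(du)=e^{\alpha u}\tilde\nu_\xi(du)$ this becomes the shift-subinvariance $m_\xi(F)\geq m_\xi(F+c)$, which forces $m_\xi$ to be absolutely continuous with a nonincreasing density $\ell_\xi$. Undoing the substitutions gives $\nu_\xi(dr)=r^{-\alpha-1}k_\xi(r)\,dr$ with $k_\xi(r)=\ell_\xi(\log r)$ nonincreasing, and one passes to the right-continuous version.

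The main obstacle is the measure-theoretic core of the ``only if'' direction: producing the polar disintegration with the correct $\xi$-independent normalization, and transferring the scaling inequality to $\lambda$-almost every fibre \emph{uniformly in} $b$ so that the per-direction argument is legitimate. Once that is secured, the logarithmic reduction to shift-subinvariance and the resulting nonincreasing density are routine (this is the heart of the $\alpha=0$ case in \citet{Sato's_book1999}), but arranging the ``$\lambda$-a.e.\ $\xi$, all $b$'' statement to hold on one common null set is the delicate point.
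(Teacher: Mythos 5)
You should know at the outset that the paper contains no proof of this statement to compare yours against: Theorem \ref{characterization radial} is imported verbatim from \citet{MaejimaUeda2009b} (``For $m=0$, \citet{MaejimaUeda2009b} proved the following''). Judged on its own merits, your outline is correct, and it is the natural route: it generalizes the proof of the $\alpha=0$ case (Theorem 15.10 of \citet{Sato's_book1999}) in exactly the way the cited reference does. Your reduction of \eqref{def_characteristic_function} to the single inequality $\nu(B)\geq b^\alpha\nu(bB)$, $B\in\B_0(\rd)$, $b>1$, is legitimate precisely because $\alpha<2$ makes the Gaussian constraint $(1-b^{\alpha-2})A\succeq 0$ vacuous; note that this same identity $\nu_b(B)=\nu(B)-b^\alpha\nu(bB)$ is used without comment in the paper's proof of Lemma \ref{rho_b in I_alpha}, so you are consistent with the paper's conventions. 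The ``if'' computation is complete as written, and in the ``only if'' direction you correctly isolate the delicate point (a single $\lambda$-null set working simultaneously for all $b$) and propose the standard fix. Two further points should be made explicit in a full write-up: (a) measurability of $k_\xi(r)$ in $\xi$, which requires choosing the nonincreasing density by an explicitly measurable recipe (e.g.\ limits of difference quotients of $\xi\mapsto\nu_\xi((r,r+h])$ or of $m_\xi$ over shrinking intervals), not a bare Radon--Nikodym existence statement; and (b) the theorem asserts the normalization $\int_0^\infty(r^2\wedge 1)r^{-\alpha-1}k_\xi(r)dr=\int_\rd(|x|^2\wedge 1)\nu(dx)$ for \emph{all} $\xi\in S$, so after the $\lambda$-a.e.\ construction you must redefine $k_\xi$ on the exceptional null set. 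Finally, a shortcut worth knowing: for fixed $b$ the inequality $\nu(B)\geq b^\alpha\nu(bB)$ for all $B$ is equivalent to $\sigma(B)\geq\sigma(bB)$ for all $B$, where $\sigma(dx):=|x|^\alpha\nu(dx)$; so one can apply the measure-level core of the $\alpha=0$ argument (polar representation with $r^{-1}$ times a nonincreasing density, valid for any $\sigma$-finite measure finite on annuli and satisfying the scaling inequality) directly to $\sigma$ and divide back by $|x|^\alpha$, which collapses your logarithmic fibre-wise work into a citation; your longer version is equally valid.
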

For characterizations of $L_m^{\la\al\ra}(\rd)$, we need some preparation.
\begin{defn}
Let $\alpha<2$. 
For $\mu\in L^{\la\alpha\ra}_0(\rd)$ with L\'evy measure $\nu\neq 0$, we call $k_\xi(r)$ in 
Theorem \ref{characterization radial} the \emph{$k$-function} of $\nu$ (or $\mu$).
If $\nu=0$, then we define the $k$-function of $\nu$ (or $\mu$) as the zero-function.
And we call the function $h_\xi(u),u\in\R$ defined by $h_\xi(u):=k_\xi(e^{-u})$ the \emph{$h$-function} of $\nu$ (or $\mu$).
\end{defn}
For $f\colon\R\to\R$, we introduce the \emph{difference operator} as follows:
$$
\Delta_\varepsilon^n f(u):=\sum_{j=0}^n(-1)^{n-j}\binom{n}{j}f(u+j\varepsilon),\quad \text{for }u\in\R,\ \varepsilon >0
\text{ and }n\in\Z_+.
$$
For $m\in\Z_+$, $f\colon\R\to\R$ is said to be \emph{monotone of order $m$} if $\Delta_\varepsilon^n f(u)\geq 0$ 
for all $u\in\R$, $\varepsilon >0$ and $n=0,1,2,\dots,m$. $f\colon\R\to\R$ is said to be \emph{absolutely monotone} 
if $f$ is monotone of order $m$ for all $m\in\Z_+$.

The following four statements are proved by similar arguments to those in Section 1.2 of \citet{Sato's_book2003}, originally done in \citet{Sato1980},
so we omit their proofs.
\begin{thm}\label{monotone h-function}
Suppose $\alpha<2$.
\begin{enumerate}[(i)]
\item Let $m\in\Z_+$. Then $\mu\in L^{\la\alpha\ra}_m(\rd)$ if and only if $\mu\in L^{\la\alpha\ra}_0(\rd)$ and 
the $h$-function $h_\xi(u)$ of $\mu$ is monotone of order $m+1$ in $u\in\R$ for $\lambda$-a.e. $\xi\in S$.
\item $\mu\in L^{\la\alpha\ra}_\infty(\rd)$ if and only if $\mu\in L^{\la\alpha\ra}_0(\rd)$ and the $h$-function 
$h_\xi(u)$ of $\mu$ is absolutely monotone in $u\in\R$ for $\lambda$-a.e. $\xi\in S$.
\end{enumerate}
\end{thm}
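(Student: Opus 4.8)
The plan is to reduce the statement to the corresponding facts about monotone functions, exactly as in Section 1.2 of \citet{Sato's_book2003}. The base case $m=0$ is Theorem \ref{characterization radial}: there ``monotone of order $1$'' means precisely that $h_\xi$ is nonnegative and nondecreasing, i.e.\ that $k_\xi$ is nonincreasing. For the inductive step I would first translate the decomposability of Proposition \ref{The properties of L^{<alpha>}_m} (ii) into an operation on $h$-functions. Writing $\nu,\nu_b$ for the L\'evy measures of $\mu,\rho_b$, relation \eqref{def_characteristic_function} gives $\nu_b(B)=\nu(B)-b^\alpha\nu(bB)$ for $B\in\B_0(\rd)$, as already used in Lemma \ref{rho_b in I_alpha}. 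Inserting the radial representation of Theorem \ref{characterization radial} and substituting $r\mapsto br$ in the scaled term, a short computation shows $\rho_b\in L^{\la\alpha\ra}_0(\rd)$ with the same spherical measure $\lambda$ and with $k$-function $k_\xi^{(b)}(r)=k_\xi(r)-k_\xi(br)$; uniqueness in Theorem \ref{characterization radial} legitimises this identification. In terms of $h$-functions this reads, with $\ep:=\log b>0$,
\[
h_\xi^{(b)}(u)=h_\xi(u)-h_\xi(u-\ep)=\Delta_\ep^1 h_\xi(u-\ep),
\]
so that, up to a harmless translation, passing from $\mu$ to $\rho_b$ replaces $h_\xi$ by its first forward difference of step $\log b$.

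With this dictionary, the theorem becomes a statement about how the order of monotonicity of $h_\xi$ behaves under differencing. The analytic input I would isolate is the following lemma on a nonnegative nondecreasing, one-sidedly continuous $f\colon\R\to\R$: for $n\in\N$, $f$ is monotone of order $n+1$ if and only if, for every $c>0$, the function $u\mapsto f(u)-f(u-c)$ is monotone of order $n$. Granting this, the induction closes at once. Suppose the claim holds at level $m$. By Proposition \ref{The properties of L^{<alpha>}_m} (ii) together with $L^{\la\alpha\ra}_{m+1}(\rd)\subset L^{\la\alpha\ra}_0(\rd)$, we have $\mu\in L^{\la\alpha\ra}_{m+1}(\rd)$ iff $\mu\in L^{\la\alpha\ra}_0(\rd)$ and $\rho_b\in L^{\la\alpha\ra}_m(\rd)$ for every $b>1$. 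Applying the induction hypothesis to $\rho_b$, this is equivalent to $h_\xi^{(b)}$ being monotone of order $m+1$ for $\lambda$-a.e.\ $\xi$ and every $b>1$; by the displayed identity and translation invariance of the monotonicity order, that says exactly that $u\mapsto h_\xi(u)-h_\xi(u-\log b)$ is monotone of order $m+1$ for all $b$, which by the lemma (with $n=m+1$) is equivalent to $h_\xi$ being monotone of order $m+2$. This is the assertion at level $m+1$, proving (i).

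For (ii) I would simply intersect over $m$. Since $L^{\la\alpha\ra}_\infty(\rd)=\bigcap_{m\ge0}L^{\la\alpha\ra}_m(\rd)$ (Proposition \ref{The properties of L^{<alpha>}_m} (iii)), part (i) gives $\mu\in L^{\la\alpha\ra}_\infty(\rd)$ iff $\mu\in L^{\la\alpha\ra}_0(\rd)$ and $h_\xi$ is monotone of order $m+1$ for every $m\in\Z_+$, i.e.\ absolutely monotone; collecting the countably many $\lambda$-null exceptional sets (one per $m$) into a single null set yields the $\lambda$-a.e.\ statement.

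The main obstacle is the monotonicity lemma, and within it the passage from the pure differences $\Delta_\ep^n f$ (the only ones appearing in the definition) to the mixed differences that arise when one differences $f(\cdot)-f(\cdot-c)$ with a second, independent step. These agree only because the functions at hand are nonnegative, nondecreasing and one-sidedly continuous (the latter inherited from the right-continuity of $k_\xi$ in Theorem \ref{characterization radial}), so that order-$n$ monotonicity built from pure differences already forces the corresponding mixed-difference inequalities. Establishing this equivalence, together with the measurability needed to combine ``for every $b$'' with ``for $\lambda$-a.e.\ $\xi$'' (handled by restricting to a countable dense set of $b$ and invoking one-sided continuity), is precisely the content borrowed from \citet{Sato1980} and \citet{Sato's_book2003}, which is why the authors omit it.
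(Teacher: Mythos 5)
Your proposal is correct and follows essentially the same route as the paper, which omits the proof precisely because it is the argument of Section 1.2 of Rocha-Arteaga and Sato (2003) / Sato (1980): induction via the decomposability in Proposition \ref{The properties of L^{<alpha>}_m} (ii), the identification of the $h$-function of $\rho_b$ as the first difference $\Delta_{\log b}^1 h_\xi(\,\cdot\,-\log b)$, the lemma that differencing lowers the order of monotonicity by exactly one (with the pure-versus-mixed difference subtlety), and intersection over $m$ with countably many null sets for part (ii). You also correctly flag the two technical points (mixed differences and the ``for every $b$'' versus ``$\lambda$-a.e.\ $\xi$'' measurability issue) that Sato's machinery is invoked to settle.
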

\begin{lem}\label{Gamma_xi}
Let $\alpha<2$ and $0<c<\infty$.
A function $h_\xi(u)$ is absolutely monotone in $u\in\R$ and measurable in $\xi\in S$ and satisfies
$$
\int_{-\infty}^\infty\left(e^{-2u}\wedge 1\right)e^{\alpha u}h_\xi(u)du=c
$$
for all $\xi\in S$
if and only if
$$
e^{\alpha u}h_\xi(u)=\int_{(0,2)\cap[\alpha,2)}e^{\beta u}\Gamma_\xi(d\beta),
$$
where $\Gamma_\xi$ is a measure on $(0,2)\cap[\alpha,2)$ for each $\xi\in S$ satisfying
$$
\int_{(0,2)\cap[\alpha,2)}\left(\frac 1\beta+\frac 1{2-\beta}\right)\Gamma_\xi(d\beta)=c
$$
and $\Gamma_\xi(B)$ is measurable in $\xi\in S$ for every $B\in\B\left((0,2)\cap[\alpha,2)\right)$.
\end{lem}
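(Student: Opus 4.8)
The plan is to read this lemma as a Bernstein-type integral representation for absolutely monotone functions, sharpened by an explicit computation that pins down the support of the representing measure. I would split the argument into the two implications, handling the ``if'' part by Tonelli's theorem and the ``only if'' part by invoking the representation theorem for absolutely monotone functions (the same circle of ideas from Section 1.2 of \citet{Sato's_book2003} and \citet{Sato1980} that already underlies Theorem \ref{monotone h-function}).

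For the ``if'' part, suppose $e^{\alpha u}h_\xi(u)=\int_{(0,2)\cap[\alpha,2)}e^{\beta u}\Gamma_\xi(d\beta)$ with $\int_{(0,2)\cap[\alpha,2)}(\beta^{-1}+(2-\beta)^{-1})\Gamma_\xi(d\beta)=c$. Writing $h_\xi(u)=\int e^{(\beta-\alpha)u}\Gamma_\xi(d\beta)$ and using $\beta\geq\alpha$ on the range, each integrand is absolutely monotone because $\Delta_\varepsilon^n e^{(\beta-\alpha)u}=(e^{(\beta-\alpha)\varepsilon}-1)^n e^{(\beta-\alpha)u}\geq 0$; since the difference operator is linear and positivity is preserved under integration against $\Gamma_\xi\geq 0$, $h_\xi$ is absolutely monotone, and its measurability in $\xi$ follows from that of $\Gamma_\xi(B)$. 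The integrability condition is then a direct Tonelli computation: as all integrands are nonnegative I would interchange the order of integration and evaluate
\[
\int_{-\infty}^\infty (e^{-2u}\wedge 1)e^{\beta u}\,du=\int_{-\infty}^0 e^{\beta u}\,du+\int_0^\infty e^{(\beta-2)u}\,du=\frac1\beta+\frac1{2-\beta},
\]
valid for $0<\beta<2$, so that $\int_{-\infty}^\infty(e^{-2u}\wedge1)e^{\alpha u}h_\xi(u)\,du=\int(\beta^{-1}+(2-\beta)^{-1})\Gamma_\xi(d\beta)=c$.

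For the ``only if'' part, the key step is to apply the representation theorem for absolutely monotone functions on $\R$: since $h_\xi$ is absolutely monotone, there is a unique measure $\sigma_\xi$ on $[0,\infty)$ with $h_\xi(u)=\int_{[0,\infty)}e^{su}\sigma_\xi(ds)$ for all $u\in\R$. Pushing $\sigma_\xi$ forward under $s\mapsto s+\alpha$ produces a measure $\Gamma_\xi$ on $[\alpha,\infty)$ with $e^{\alpha u}h_\xi(u)=\int_{[\alpha,\infty)}e^{\beta u}\Gamma_\xi(d\beta)$. To locate the support I would rerun the Tonelli computation above: the inner integral $\int_{-\infty}^\infty(e^{-2u}\wedge1)e^{\beta u}\,du$ equals $+\infty$ precisely when $\beta\leq 0$ or $\beta\geq 2$, and equals $\beta^{-1}+(2-\beta)^{-1}$ when $0<\beta<2$. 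Hence finiteness of $c$ forces $\Gamma_\xi$ to charge no mass outside $(0,2)$, so $\Gamma_\xi$ lives on $[\alpha,\infty)\cap(0,2)=(0,2)\cap[\alpha,2)$, and the same computation delivers $\int(\beta^{-1}+(2-\beta)^{-1})\Gamma_\xi(d\beta)=c$.

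The two points I expect to require the most care are both in the ``only if'' direction. First, the representation theorem must be quoted in exactly the form ``absolutely monotone on all of $\R$ $\Longleftrightarrow$ a nonnegative mixture of $u\mapsto e^{su}$, $s\geq0$, finite for every $u$''; I would either cite it from \citet{Sato1980} and \citet{Sato's_book2003} or reconstruct it from the order-$m$ monotone case of Theorem \ref{monotone h-function} by letting $m\to\infty$. Second, and this is the genuine obstacle, is the measurability of $\xi\mapsto\Gamma_\xi(B)$: the representation is produced pointwise in $\xi$, so I would recover measurability from the uniqueness of the exponential representation together with the measurability of $\xi\mapsto h_\xi(u)$ for each $u$, expressing $\Gamma_\xi$ of intervals as measurable limits built from the $h_\xi$-values and thereby transporting measurability through the inversion. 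Everything else is routine once the representation and this measurability are secured.
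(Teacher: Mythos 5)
Your proof is correct and is essentially the argument the paper intends: the paper omits the proof of this lemma, stating only that it follows by the arguments of Section 1.2 of \citet{Sato's_book2003} (originally \citet{Sato1980}), and those arguments are exactly your route --- the Bernstein-type exponential representation of an absolutely monotone function on $\R$, followed by the Tonelli computation $\int_{-\infty}^\infty(e^{-2u}\wedge 1)e^{\beta u}\,du=\beta^{-1}+(2-\beta)^{-1}$ for $0<\beta<2$ (and $=+\infty$ otherwise), which both pins the support of $\Gamma_\xi$ to $(0,2)\cap[\alpha,2)$ and yields the normalization $c$. The two delicate points you flag, namely quoting the representation theorem in the form valid on all of $\R$ and recovering measurability of $\xi\mapsto\Gamma_\xi(B)$ from the uniqueness of the representation and the measurability of $\xi\mapsto h_\xi(u)$, are precisely the points handled in Sato's treatment, so your plan coincides with the intended proof.
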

\begin{thm}\label{L^<alpha>_infty}
Let $\alpha<2$.
\begin{enumerate}[(i)]
\item If $\mu\in L^{\la\alpha\ra}_\infty(\rd)$ with L\'evy measure $\nu$, then
$$
\nu(B)=\int_{(0,2)\cap[\alpha,2)}\Gamma(d\beta)\int_S\lambda_\beta(d\xi)\int_0^\infty\1_B(r\xi)r^{-\beta-1}dr,
\quad B\in\B(\rd\setminus \{0\}),
$$
where $\Gamma$ is a measure on $(0,2)\cap[\alpha,2)$ satisfying
$$
\int_{(0,2)\cap[\alpha,2)}\left(\frac 1\beta+\frac 1{2-\beta}\right)\Gamma(d\beta)<\infty,
$$
and $\lambda_\beta$ is a probability measure on $S$ for each $\beta\in (0,2)\cap[\alpha,2)$, and $\lambda_\beta(C)$ is 
measurable in $\beta\in (0,2)\cap[\alpha,2)$ for every $C\in\B(S)$.
This $\Gamma$ is uniquely determined by $\mu$ and this $\lambda_\beta$ is uniquely determined by $\mu$ up to 
$\beta$ of $\Gamma$-measure $0$.
\item If $\mu\in I(\rd)$ with L\'evy measure $\nu$ is expressible as in (i), then $\mu\in L^{\la\alpha\ra}_\infty(\rd)$.
\end{enumerate}
\end{thm}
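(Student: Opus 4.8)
The plan is to combine the radial characterization of Theorem~\ref{characterization radial}, the absolute-monotonicity criterion of Theorem~\ref{monotone h-function}~(ii), and the integral representation of Lemma~\ref{Gamma_xi}; throughout, write $J:=(0,2)\cap[\alpha,2)$. The only genuine work is the passage between the two ways of indexing the resulting mixture, by direction $\xi\in S$ on the one hand and by stability index $\beta\in J$ on the other.

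For part~(i) I would start from $\mu\in L^{\la\alpha\ra}_\infty(\rd)$ and apply Theorem~\ref{monotone h-function}~(ii) to obtain $\mu\in L^{\la\alpha\ra}_0(\rd)$ together with the spherical probability measure $\lambda$ and $k$-function $k_\xi$ of Theorem~\ref{characterization radial}, whose $h$-function $h_\xi(u)=k_\xi(e^{-u})$ is absolutely monotone for $\lambda$-a.e.\ $\xi$. The substitution $r=e^{-u}$ converts the $\xi$-independent normalization of Theorem~\ref{characterization radial} into precisely the hypothesis $\int_{-\infty}^\infty(e^{-2u}\wedge 1)e^{\alpha u}h_\xi(u)\,du=c$ of Lemma~\ref{Gamma_xi}; that lemma then furnishes a measure $\Gamma_\xi$ on $J$, measurable in $\xi$, with $e^{\alpha u}h_\xi(u)=\int_J e^{\beta u}\Gamma_\xi(d\beta)$ and $\int_J(\beta^{-1}+(2-\beta)^{-1})\Gamma_\xi(d\beta)=c$. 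Undoing the substitution gives $r^{-\alpha-1}k_\xi(r)=\int_J r^{-\beta-1}\Gamma_\xi(d\beta)$, and inserting this into the radial representation of $\nu$ and using Tonelli's theorem yields $\nu(B)=\int_S\lambda(d\xi)\int_J\Gamma_\xi(d\beta)\int_0^\infty\1_B(r\xi)r^{-\beta-1}\,dr$.

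It then remains to re-index and to track uniqueness. I would introduce the finite measure $\widetilde\Gamma(d\xi,d\beta):=\lambda(d\xi)\Gamma_\xi(d\beta)$ on $S\times J$, let $\Gamma$ be its marginal in $\beta$, and disintegrate $\widetilde\Gamma$ along $\Gamma$ to obtain probability kernels $\lambda_\beta$ on $S$ that are measurable in $\beta$; this puts $\nu$ in the asserted form, and Tonelli together with $\lambda(S)=1$ gives $\int_J(\beta^{-1}+(2-\beta)^{-1})\Gamma(d\beta)=c<\infty$. Uniqueness descends along the same chain: $\lambda$ and $k_\xi$ are unique (the latter up to $\lambda$-null $\xi$) by Theorem~\ref{characterization radial}, $\Gamma_\xi$ is then forced by the injectivity of the two-sided transform $\Gamma_\xi\mapsto\int_J e^{\beta u}\Gamma_\xi(d\beta)$ underlying Lemma~\ref{Gamma_xi}, hence $\widetilde\Gamma$ and its marginal $\Gamma$ are determined by $\mu$, and $\lambda_\beta$ is unique up to $\Gamma$-null $\beta$ by uniqueness of disintegrations.

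For part~(ii) I would run this chain backwards. Given $\nu$ in the stated form, the bound $\beta^{-1}+(2-\beta)^{-1}\geq 2$ on $(0,2)$ shows $\Gamma$ is finite, so $\widetilde\Gamma(d\xi,d\beta):=\lambda_\beta(d\xi)\Gamma(d\beta)$ is finite on $S\times J$; disintegrating it along its $\xi$-marginal produces kernels $\Gamma_\xi$, and I put $c(\xi):=\int_J(\beta^{-1}+(2-\beta)^{-1})\Gamma_\xi(d\beta)$ and $c:=\int_J(\beta^{-1}+(2-\beta)^{-1})\Gamma(d\beta)$. The one delicate point—and the main obstacle of the whole theorem—is that the $\xi$-independent normalization demanded by Theorem~\ref{characterization radial} is lost when $\widetilde\Gamma$ is projected onto $\beta$, so the naive $\Gamma_\xi$ need not have the same total weight in every direction. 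I would repair this by replacing $\Gamma_\xi$ with $(c/c(\xi))\Gamma_\xi$ and compensating in the directional measure: this is legitimate because the integral of $c(\xi)$ against the $\xi$-marginal equals $c<\infty$, so $c(\xi)\in(0,\infty)$ for a.e.\ direction carrying mass, and this choice of $c$ makes the compensated directional measure a probability measure $\lambda$, while the rescaling leaves $\nu$ unchanged. Setting $k_\xi(r):=\int_J r^{\alpha-\beta}(c/c(\xi))\Gamma_\xi(d\beta)$, which is nonincreasing (indeed continuous) since each $r^{\alpha-\beta}$ is nonincreasing as $\beta\geq\alpha$, Lemma~\ref{Gamma_xi} now applies directly to show that $h_\xi(u)=k_\xi(e^{-u})$ is absolutely monotone with the $\xi$-independent normalization $c$. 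Hence Theorem~\ref{characterization radial} gives $\mu\in L^{\la\alpha\ra}_0(\rd)$ and Theorem~\ref{monotone h-function}~(ii) upgrades this to $\mu\in L^{\la\alpha\ra}_\infty(\rd)$.
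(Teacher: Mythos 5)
Your proof is correct and follows essentially the route the paper itself intends: the paper omits the proof of this theorem, stating that it is proved "by similar arguments to those in Section 1.2 of Rocha-Arteaga and Sato (2003), originally done in Sato (1980)," and that argument is exactly the chain you assemble — the polar decomposition of Theorem~\ref{characterization radial}, the absolute-monotonicity criterion of Theorem~\ref{monotone h-function}~(ii), the exponential representation of Lemma~\ref{Gamma_xi}, and then the passage between the $\xi$-indexed and $\beta$-indexed mixtures via the joint measure $\lambda(d\xi)\Gamma_\xi(d\beta)$, its disintegration, and the rescaling by $c/c(\xi)$ to restore the $\xi$-independent normalization. The only points you elide are routine: the trivial case $\nu=0$ (Lemma~\ref{Gamma_xi} requires $0<c<\infty$), and modifying $k_\xi$ on a $\lambda$-null set so that absolute monotonicity holds for \emph{every} $\xi$ before invoking that lemma.
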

\begin{thm}
\begin{enumerate}[(i)]
\item If $\alpha\leq 0$, then
$L^{\la\alpha\ra}_\infty(\rd)\subset\overline{S(\rd)}$.
\item If $0<\alpha<2$, then 
$L^{\la\alpha\ra}_\infty(\rd)\subset\overline{\bigcup_{\beta\in[\alpha,2]}S_\beta(\rd)}$.
\end{enumerate}
\end{thm}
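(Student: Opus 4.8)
The plan is to read off the L\'evy measure of an arbitrary $\mu\in L^{\la\alpha\ra}_\infty(\rd)$ from Theorem \ref{L^<alpha>_infty} (i) and to recognize it as a continuous superposition, over the stability index $\beta$, of stable L\'evy measures; I would then realize $\mu$ as a weak limit of \emph{finite} convolutions of stable distributions, which places it in the closure under convolution and weak convergence. Writing $\mu=\mu_{(A,\nu,\gamma)}$, Theorem \ref{L^<alpha>_infty} (i) gives
$$
\nu(B)=\int_{(0,2)\cap[\alpha,2)}\Gamma(d\beta)\int_S\lambda_\beta(d\xi)\int_0^\infty\1_B(r\xi)r^{-\beta-1}dr,\q B\in\B(\rd\setminus\{0\}),
$$
with $\int_{(0,2)\cap[\alpha,2)}(\beta^{-1}+(2-\beta)^{-1})\Gamma(d\beta)<\infty$. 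For each fixed $\beta$ the inner measure is the L\'evy measure of a $\beta$-stable law, and the Gaussian part $A$ is the triplet of a $2$-stable (Gaussian) law; thus every ``slice'' of $\mu$ is stable, and only the continuous mixing in $\beta$ prevents $\mu$ from already being a finite convolution.

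First I would truncate. For $n\in\N$ let $\Gamma_n$ be the restriction of $\Gamma$ to the compact interval $I^{(n)}:=[\,\alpha\vee n^{-1},\,2-n^{-1}\,]\cap[\alpha,2)$, and let $\mu_n$ be infinitely divisible with Gaussian part $A$, drift $\gamma$, and L\'evy measure $\nu_n$ obtained by replacing $\Gamma$ with $\Gamma_n$. Since $\nu_n\uparrow\nu$ on $\B_0(\rd)$ and, by integrability of $\beta^{-1}+(2-\beta)^{-1}$ against $\Gamma$, $\int_\rd(|x|^2\wedge1)\nu_n(dx)\to\int_\rd(|x|^2\wedge1)\nu(dx)$, the convergence criterion for infinitely divisible laws (Theorem 8.7 of \citet{Sato's_book1999}) yields $\mu_n\to\mu$ weakly. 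It therefore suffices to exhibit each $\mu_n$ as a weak limit of finite convolutions of stable laws whose indices lie in $[\alpha,2]$.

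Next I would discretize the compact, endpoint-avoiding interval $I^{(n)}$ into finitely many cells $I_1,\dots,I_N$ with representatives $\beta_k\in I_k\subset[\alpha,2)$. Freezing the radial exponent to $\beta_k$ on $I_k$ and amalgamating the spherical measures into the finite measure $\sigma_k(d\xi):=\int_{I_k}\lambda_\beta(d\xi)\,\Gamma(d\beta)$, the measure $\tilde\nu_k(B):=\int_S\sigma_k(d\xi)\int_0^\infty\1_B(r\xi)r^{-\beta_k-1}dr$ is a genuine $\beta_k$-stable L\'evy measure. Hence the law with Gaussian part $A$, L\'evy measure $\sum_{k}\tilde\nu_k$, and a suitable drift is a finite convolution of stable distributions (the Gaussian factor being $2$-stable), all of whose indices lie in $[\alpha,2]$. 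As the mesh tends to $0$, $r^{-\beta_k-1}\to r^{-\beta-1}$ uniformly on compact $r$-sets over the compact range of $\beta$, so $\sum_k\tilde\nu_k\to\nu_n$ in the sense demanded by Theorem 8.7; choosing the (free) location parameters of the non-degenerate stable factors so that the centers converge to that of $\mu_n$ supplies the approximating convolutions. A diagonal choice of $n$ and mesh then presents $\mu$ as a weak limit, giving $\mu\in\overline{S(\rd)}$ when $\alpha\le0$ and $\mu\in\overline{\bigcup_{\beta\in[\alpha,2]}S_\beta(\rd)}$ when $0<\alpha<2$.

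The hard part will be the joint control, inside the convergence criterion, of the L\'evy-measure convergence together with the centering: freezing the exponent on each cell perturbs the compensating drift, so I must check that the location parameters of the stable factors can be chosen to force the centers to converge, and I must verify that the contributions near $\beta=0$ and $\beta=2$ genuinely vanish, which is exactly where the weight $\beta^{-1}+(2-\beta)^{-1}$ in the integrability condition is used. The remaining steps are the routine bookkeeping of superposing finitely many stable L\'evy measures and of invoking the closure of $\overline{S(\rd)}$ (resp. $\overline{\bigcup_{\beta\in[\alpha,2]}S_\beta(\rd)}$) under convolution and weak convergence.
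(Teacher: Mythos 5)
Your proposal is correct and is essentially the paper's own route: the paper omits the proof of this theorem, deferring to \citet{Sato1980} and Section 1.2 of \citet{Sato's_book2003}, where the inclusion is obtained by exactly your scheme — read off the mixed-stable form of the L\'evy measure from the representation theorem, discretize the mixing measure $\Gamma$ in $\beta$ (after cutting away neighborhoods of $\beta=0$ and $\beta=2$, where the weight $\beta^{-1}+(2-\beta)^{-1}$ controls the error) so that each cell contributes a genuine stable factor, and pass to the weak limit via the convergence criterion for infinitely divisible distributions. The centering issue you flag as the ``hard part'' is in fact harmless: keeping the triplet drift $\gamma$ fixed and letting each stable factor carry zero drift already gives convergence of cumulants, since $\beta\mapsto\int_0^\infty\bigl(e^{i\la z,r\xi\ra}-1-\tfrac{i\la z,r\xi\ra}{1+r^2}\bigr)r^{-\beta-1}dr$ is Lipschitz in $\beta$ uniformly in $\xi$ on compact subsets of $(0,2)$.
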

Combining this theorem with Proposition \ref{supset} with $m=\infty$, we conclude 
\begin{thm}\label{L infty}
\begin{enumerate}[(i)]
\item If $\alpha\leq 0$, then
$L^{\la\alpha\ra}_\infty(\rd)=\overline{S(\rd)}$.
\item If $0<\alpha<2$, then 
$L^{\la\alpha\ra}_\infty(\rd)=\overline{\bigcup_{\beta\in[\alpha,2]}S_\beta(\rd)}$.
\end{enumerate}
\end{thm}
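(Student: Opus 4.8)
The plan is to derive each equality by a two-sided inclusion, both halves of which are already available in the excerpt, so the proof reduces to intersecting known bounds. I would split into the two regimes exactly as the statement is organized.

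For part (i) I would fix $\alpha\leq 0$. The upper bound $L^{\la\alpha\ra}_\infty(\rd)\subset\overline{S(\rd)}$ is precisely the assertion of the theorem immediately preceding this one, while the lower bound $L^{\la\alpha\ra}_\infty(\rd)\supset\overline{S(\rd)}$ is Proposition \ref{supset}(i), which is stated uniformly for $m\in\{0,1,2,\dots,\infty\}$ and therefore applies at $m=\infty$. Mutual inclusion then gives $L^{\la\alpha\ra}_\infty(\rd)=\overline{S(\rd)}$. For part (ii) I would fix $0<\alpha<2$ and run the identical argument with $\overline{\bigcup_{\beta\in[\alpha,2]}S_\beta(\rd)}$ replacing $\overline{S(\rd)}$, using the preceding theorem for $\subset$ and Proposition \ref{supset}(ii) for $\supset$.

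The only point needing a word of care is that both cited results are genuinely available at $m=\infty$: Proposition \ref{supset} presupposes the definition $L^{\la\alpha\ra}_m(\rd)=\mathfrak Q_\alpha^{m+1}(I(\rd))$, which is legitimate because $I(\rd)$ is c.c.s.s.\ and contains every stable law. I do not expect any genuine obstacle here, since the present statement is a corollary; the real difficulty was resolved earlier, in establishing the upper inclusion. That inclusion rests on the integral representation of Theorem \ref{L^<alpha>_infty}, which itself extracts the stable components from an absolutely monotone $h$-function via the Bernstein-type decomposition of Lemma \ref{Gamma_xi} together with the absolute-monotonicity characterization of Theorem \ref{monotone h-function}(ii). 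The hard part thus lies entirely upstream, and once those tools are in hand the two equalities follow immediately.
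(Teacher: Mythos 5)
Your proof is correct and matches the paper's own argument exactly: the paper obtains Theorem \ref{L infty} by combining the unnamed theorem immediately preceding it (the inclusion $\subset$) with Proposition \ref{supset} applied at $m=\infty$ (the inclusion $\supset$). Your added remark that the genuine work lies upstream in Theorem \ref{L^<alpha>_infty}, Lemma \ref{Gamma_xi}, and Theorem \ref{monotone h-function} is also consistent with how the paper structures the material.
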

To conclude this section, we go back once to the case for a general c.c.s.s.\ $H\subset I(\rd)$.
\begin{thm}
Let $H\subset I(\rd)$ be c.c.s.s.
\begin{enumerate}[(i)]
\item If $\alpha\leq 0$ and $H\supset S(\rd)$, then $\mathfrak Q_\alpha^\infty(H)=\overline{S(\rd)}$.
\item If $0<\alpha<2$ and $H\supset \bigcup_{\beta\in[\alpha,2]}S_\beta(\rd)$, 
then $\mathfrak Q_\alpha^\infty(H)= \overline{\bigcup_{\beta\in[\alpha,2]}S_\beta(\rd)}$.
\end{enumerate}
\end{thm}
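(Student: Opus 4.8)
The plan is to pin down $\mathfrak Q_\alpha^\infty(H)$ exactly by a squeeze argument, sandwiching it between one and the same class from above and below. The two hypotheses on $H$ play complementary roles: the stability containment furnishes the lower inclusion, while the constraint $H\subset I(\rd)$ furnishes the upper one. Both competing inclusions are supplied by results already proved, so the theorem will follow as a corollary rather than requiring new estimates.

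First I would dispose of the lower inclusion, which is immediate. In case (i) the hypotheses $\alpha\leq 0$ and $H\supset S(\rd)$ are exactly those of Proposition \ref{supset Q_alpha} (i), and in case (ii) the hypotheses $0<\alpha<2$ and $H\supset\bigcup_{\beta\in[\alpha,2]}S_\beta(\rd)$ are exactly those of Proposition \ref{supset Q_alpha} (ii). Applying that proposition with $m=\infty$ gives $\mathfrak Q_\alpha^\infty(H)\supset\overline{S(\rd)}$ in case (i) and $\mathfrak Q_\alpha^\infty(H)\supset\overline{\bigcup_{\beta\in[\alpha,2]}S_\beta(\rd)}$ in case (ii), which are precisely the two right-hand sides.

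Next I would prove the upper inclusion by comparing $H$ with the maximal c.c.s.s.\ class $I(\rd)$. Since $H\subset I(\rd)$, Lemma \ref{Inclusion} gives $\mathfrak Q_\alpha(H)\subset\mathfrak Q_\alpha(I(\rd))$, and I would iterate this to obtain $\mathfrak Q_\alpha^m(H)\subset\mathfrak Q_\alpha^m(I(\rd))$ for every $m\in\N$ by induction. The one point requiring care is that each application of Lemma \ref{Inclusion} needs both of its arguments to lie in $I(\rd)$; this is guaranteed by Proposition \ref{Q_alpha^m_property} (i), which ensures every iterate $\mathfrak Q_\alpha^{m-1}(H)$ and $\mathfrak Q_\alpha^{m-1}(I(\rd))$ is c.c.s.s.\ and hence a subclass of $I(\rd)$, so that the monotonicity step is legitimate at each stage. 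Taking the intersection over $m$ then yields $\mathfrak Q_\alpha^\infty(H)\subset\mathfrak Q_\alpha^\infty(I(\rd))=L^{\la\alpha\ra}_\infty(\rd)$, the last equality being the definition of $L^{\la\alpha\ra}_\infty(\rd)$. Finally I would invoke Theorem \ref{L infty}, which identifies $L^{\la\alpha\ra}_\infty(\rd)$ with $\overline{S(\rd)}$ when $\alpha\leq 0$ and with $\overline{\bigcup_{\beta\in[\alpha,2]}S_\beta(\rd)}$ when $0<\alpha<2$, giving an upper inclusion matching the lower one.

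Because both inclusions are drawn from previously established results, I do not expect a serious obstacle here: the argument is essentially that this theorem is a corollary of Proposition \ref{supset Q_alpha} and Theorem \ref{L infty}, bridged by Lemma \ref{Inclusion} and Proposition \ref{Q_alpha^m_property} (i). The only place to be attentive is the induction confirming $\mathfrak Q_\alpha^m(H)\subset\mathfrak Q_\alpha^m(I(\rd))$ at every finite level, together with the harmless passage to the intersection $m=\infty$, which preserves inclusion automatically.
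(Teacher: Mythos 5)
Your proof is correct and is essentially identical to the paper's own argument: the lower inclusion from Proposition \ref{supset Q_alpha}, the upper inclusion by iterating Lemma \ref{Inclusion} against $I(\rd)$ to land in $L^{\la\alpha\ra}_\infty(\rd)$, and then Theorem \ref{L infty} to identify that class. The only differences are cosmetic — you spell out case (ii) and the c.c.s.s.\ justification for applying Lemma \ref{Inclusion} at each stage, which the paper leaves implicit.
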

\begin{proof}
We only prove (i), since (ii) is similarly proved.
Proposition \ref{supset Q_alpha} yields that
$\mathfrak Q_\alpha^\infty(H)\supset \overline{S(\rd)}$.
Using Lemma \ref{Inclusion} repeatedly, we have $\mathfrak Q_\alpha^{m+1}(H)\subset \mathfrak Q_\alpha^{m+1}(I(\rd))=
L^{\la\alpha\ra}_m(\rd)$ for $m\in\Z_+$. Hence $\mathfrak Q_\alpha^\infty(H)\subset L^{\la\alpha\ra}_\infty(\rd)=\overline{S(\rd)}$ by Theorem \ref{L infty}.
Thus we have $\mathfrak Q_\alpha^\infty(H)=\overline{S(\rd)}$.
\end{proof}
\vskip 10mm
\section{Nested subclasses of the class of $\alpha$-selfdecomposable distributions in terms of mapping}
For $\alpha\in\R$,
\citet{MaejimaMatsuiSuzuki} defined mappings
$\Phi _{\alpha}\colon {\mathfrak D}(\Phi_{\alpha})\rightarrow I(\rd)$
by
\begin{equation}\label{Phialpha}
\Phi _{\alpha} (\mu) =
\begin{cases}
\displaystyle\law\left (\int _0^{-1/\alpha} (1+\alpha t)^{-1/\alpha}dX_t^{(\mu)}\right ),&\text{when }\alpha<0,\\[15pt]
\displaystyle\law\left (\int _0^{\infty} e^{-t}dX_t^{(\mu)}\right ),&\text{when }\alpha=0,\\[15pt]
\displaystyle\law\left (\int _0^{\infty} (1+\alpha t)^{-1/\alpha}dX_t^{(\mu)}\right ),&\text{when }\alpha>0.\\
\end{cases}
\end{equation}
Due to Theorems 2.4 and 2.8 of \citet{Sato2006},
the domains $\mathfrak D(\Phi_{\alpha})$ are as follows, (see also p.\ 49 of \citet{Sato2006}).
$$
\mathfrak  D (\Phi_{\alpha})=
\begin{cases}
I(\rd),&\text{when }\alpha <0,\\
I_{\log}(\rd),&\text{when }\alpha=0,\\
I_{\alpha}(\rd),&\text{when }0<\alpha <1,\\
I_1^*(\rd),&\text{when }\alpha=1,\\
I_\alpha^0(\rd),&\text{when }1<\alpha <2,\\
\{\delta_0\},&\text{when }\alpha\geq 2,
\end{cases}
$$
where
\begin{align*}
I_\alpha^0(\rd)
&=\left\{\mu\in 
I_\alpha(\rd)\colon \int_{\rd}x\mu(dx)=0\right\},\quad\text{for }1\leq \alpha<2,\\
I_1^*(\rd)
&=\left\{\mu=\mu_{(A,\nu,\gamma)}\in I_1^0(\rd)\colon\lim_{T\to\infty}\int_{1}^Tt^{-1}dt\int_{|x|>t}x\nu(dx)\text{ exists in }\rd\right\}.
\end{align*}
As to the ranges $\mathfrak R (\Phi_{\alpha})$, Theorem 4.6 of \citet{MaejimaMatsuiSuzuki} says the following.
\begin{equation}\label{Phi range}
\mathfrak R (\Phi_{\alpha})=
\begin{cases}
L^{\la\alpha\ra}(\rd),&\text{when }\alpha <0,\\
L^{\la0\ra}(\rd),&\text{when }\alpha=0,\\
L^{\la\alpha\ra}(\rd)\cap \mathcal C_\alpha(\rd),&\text{when }0<\alpha <1,\\
L^{\la1\ra}(\rd)\cap \mathcal C_1^*(\rd),&\text{when }\alpha=1,\\
L^{\la\alpha\ra}(\rd)\cap \mathcal C_\alpha^0(\rd),&\text{when }1<\alpha <2,\\
\{\delta_0\},&\text{when }\alpha\geq 2,
\end{cases}
\end{equation}
where
\begin{align*}
\mathcal C_1^*(\rd)
&=\left\{\wt\mu_{(\wt A,\wt\nu,\wt\gamma)}\in 
L^{\la1\ra}(\rd)\cap\mathcal C_1(\rd)\colon \wt\nu(B)=\int_S\wt\lambda(d\xi)\int_0^\infty \1_B(r\xi)r^{-2}\wt k_\xi(r)dr,\right.\\ 
&\hspace{45pt}\left.\lim_{\varepsilon \downarrow 0}
\int_\varepsilon^1t dt \int_S\xi\wt\lambda(d\xi)\int_0^\infty 
\frac{r^2}{1+t^2r^2}d\wt k_\xi(r)\ \text{exists in \(\rd\) and
 equals \(\widetilde\gamma\)}\right\},\\
\mathcal C_\alpha^0(\rd)
&=\mathcal C_\alpha(\rd)\cap I_1^0(\rd),\quad\text{for }1< \alpha<2.
\end{align*}

Now, we characterize $\Phi_{\alpha}^m\left(H\cap \mathfrak D(\Phi_{\alpha}^m)\right)$ with a c.c.s.s.\,$H\subset I(\rd)$ 
using the results in the previous section.
Note that, for $\alpha<0$, $\mathfrak D(\Phi_{\alpha}^m)=I(\rd),m\in\N$, since $\mathfrak D(\Phi_{\alpha})=I(\rd)$.
However, henceforth we do not treat the case for $\alpha\geq 2$, since it is obvious 
that $\Phi_\alpha^m(\{\delta_0\})=\{\delta_0\}$ for all $m\in\N$.
\begin{thm}\label{Phi_alpha^m(H)}
Let $H\subset I(\rd)$ be c.c.s.s., and let $m\in\N$.
\begin{enumerate}[(i)]
\item When $\alpha < 0$,  $\Phi_{\alpha}^m\left(H\right) = \mathfrak Q_\alpha^m(H)$.
\item When $\alpha = 0$,  $\Phi_0^m\left(H\cap \mathfrak D(\Phi_0^m)\right) = \mathfrak Q_0^m(H)$.
\item When $ 0<\alpha<1$, $\Phi_{\alpha}^m\left(H\cap \mathfrak D(\Phi_{\alpha}^m)\right) = 
\mathfrak Q_\alpha^m(H)\cap \mathcal C_\alpha(\rd)$.
\item When $\alpha =1$,  $\Phi_1^m\left(H\cap \mathfrak D(\Phi_1^m)\right) = 
\mathfrak Q_1^m(H)\cap \mathcal C_1^*(\rd)$.
\item When $1<\alpha<2$, $\Phi_{\alpha}^m\left(H\cap \mathfrak D(\Phi_{\alpha}^m)\right) = 
\mathfrak Q_\alpha^m(H)\cap \mathcal C_\alpha^0(\rd)$.
\end{enumerate}
\end{thm}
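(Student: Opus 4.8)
The plan is induction on $m$, with the case $m=1$ as the core. It is convenient to write $\mathcal{K}_\alpha$ for the constraint class on the right-hand side---$\mathcal{K}_\alpha=I(\rd)$ for $\alpha\le 0$, $\mathcal{K}_\alpha=\mathcal{C}_\alpha(\rd)$ for $0<\alpha<1$, $\mathcal{K}_1=\mathcal{C}_1^*(\rd)$, and $\mathcal{K}_\alpha=\mathcal{C}_\alpha^0(\rd)$ for $1<\alpha<2$---so that all five assertions read uniformly as $\Phi_\alpha^m(H\cap\mathfrak{D}(\Phi_\alpha^m))=\mathfrak{Q}_\alpha^m(H)\cap\mathcal{K}_\alpha$ (for $\alpha<0$ one has $\mathfrak{D}(\Phi_\alpha^m)=I(\rd)$, recovering (i)). Also write $T$ for the upper limit of integration in \eqref{Phialpha}, so that $T=-1/\alpha$ when $\alpha<0$ and $T=\infty$ otherwise, and $f_\alpha(t)=(1+\alpha t)^{-1/\alpha}$ (with $f_0(t)=e^{-t}$). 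I would first establish the base case $m=1$ by proving the two inclusions separately.

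For the forward inclusion, fix $\mu\in H\cap\mathfrak{D}(\Phi_\alpha)$ and put $\nu=\Phi_\alpha(\mu)$. Then $\nu\in\mathfrak{R}(\Phi_\alpha)\subset\mathcal{K}_\alpha$ by \eqref{Phi range}, so it remains to produce the factors $\rho_b$ of \eqref{def_characteristic_function} inside $H$. Setting $t_b:=(b^\alpha-1)/\alpha$ (resp.\ $\log b$ when $\alpha=0$), one has $1+\alpha t_b=b^\alpha$ and hence the self-similarity $f_\alpha(s+t_b)=b^{-1}f_\alpha(s/b^\alpha)$. Splitting $\int_0^T f_\alpha\,dX^{(\mu)}$ at $t_b$ and using the stationarity and time-scaling of the L\'evy process, the tail integral $\int_{t_b}^T f_\alpha(t)\,dX_t^{(\mu)}$ is distributed as $b^{-1}$ times a variable with law $\nu^{b^\alpha}$, whence $\widehat\nu(z)=\widehat\rho_b(z)\,\widehat\nu(b^{-1}z)^{b^\alpha}$ with $\rho_b=\law\big(\int_0^{t_b}f_\alpha(t)\,dX_t^{(\mu)}\big)$. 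Since $f_\alpha$ is positive and bounded on the finite interval $[0,t_b]$, this $\rho_b$ is the weak limit of Riemann sums $\sum_i f_\alpha(s_i)\big(X_{s_{i+1}}^{(\mu)}-X_{s_i}^{(\mu)}\big)$, each of which lies in $H$ by closedness under $t$-th convolution, type equivalence and convolution; thus $\rho_b\in H$, and $\nu\in\mathfrak{Q}_\alpha(H)$ by Theorem \ref{characterization_c.f.}(i).

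For the reverse inclusion, let $\nu\in\mathfrak{Q}_\alpha(H)\cap\mathcal{K}_\alpha$. By Lemma \ref{Inclusion} and Theorem \ref{characterization_c.f.}(ii), $\mathfrak{Q}_\alpha(H)\subset L^{\la\alpha\ra}(\rd)$, so \eqref{Phi range} gives $\nu\in L^{\la\alpha\ra}(\rd)\cap\mathcal{K}_\alpha=\mathfrak{R}(\Phi_\alpha)$ and therefore $\nu=\Phi_\alpha(\mu)$ for some $\mu\in\mathfrak{D}(\Phi_\alpha)$; the point is to show $\mu\in H$. The factor in \eqref{def_characteristic_function} is unique, $\widehat\rho_b(z)=\widehat\nu(z)/\widehat\nu(b^{-1}z)^{b^\alpha}$, and lies in $H$ because $\nu\in\mathfrak{Q}_\alpha(H)$; by the computation above it equals $\law\big(\int_0^{t_b}f_\alpha\,dX^{(\mu)}\big)$. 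As $b$ runs over $(1,\infty)$, $t_b$ runs over all of $(0,T)$, so $\law\big(\int_0^s f_\alpha\,dX^{(\mu)}\big)\in H$ for every $s\in(0,T)$. Its $1/s$-th convolution has cumulant $s^{-1}\int_0^s C_\mu(f_\alpha(t)z)\,dt$, which tends to $C_\mu(z)$ as $s\downarrow 0$ since $f_\alpha(0)=1$; closedness of $H$ under $t$-th convolution and weak convergence then yields $\mu\in H$, so $\nu\in\Phi_\alpha(H\cap\mathfrak{D}(\Phi_\alpha))$.

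For the inductive step, write $\Phi_\alpha^{m+1}=\Phi_\alpha\circ\Phi_\alpha^m$ and use the set identity $\Phi_\alpha^m(\{\mu\in A:\Phi_\alpha^m(\mu)\in B\})=\Phi_\alpha^m(A)\cap B$ with $A=H\cap\mathfrak{D}(\Phi_\alpha^m)$ and $B=\mathfrak{D}(\Phi_\alpha)$, so that the inductive hypothesis gives $\Phi_\alpha^m(H\cap\mathfrak{D}(\Phi_\alpha^{m+1}))=(\mathfrak{Q}_\alpha^m(H)\cap\mathcal{K}_\alpha)\cap\mathfrak{D}(\Phi_\alpha)$. Because $\mathfrak{Q}_\alpha^m(H)\subset L^{\la\alpha\ra}(\rd)$ and $L^{\la\alpha\ra}(\rd)\cap\mathfrak{D}(\Phi_\alpha)\subset\mathcal{K}_\alpha$, this reduces to $\mathfrak{Q}_\alpha^m(H)\cap\mathfrak{D}(\Phi_\alpha)$; applying the base case to the c.c.s.s.\ class $\mathfrak{Q}_\alpha^m(H)$ (Proposition \ref{Q_alpha^m_property}(i)) gives $\mathfrak{Q}_\alpha^{m+1}(H)\cap\mathcal{K}_\alpha$, closing the induction. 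I expect the genuine difficulties to be concentrated in the base case---namely the reconstruction $\mu\in H$ and the Riemann-sum approximation guaranteeing $\rho_b\in H$---together with the containment $L^{\la\alpha\ra}(\rd)\cap\mathfrak{D}(\Phi_\alpha)\subset\mathcal{K}_\alpha$ used in the induction, which is immediate for $0<\alpha<1$ (as $I_\alpha(\rd)\subset\mathcal{C}_\alpha(\rd)$) and for $1<\alpha<2$ (as $I_\alpha^0(\rd)\subset\mathcal{C}_\alpha(\rd)\cap I_1^0(\rd)$) but for $\alpha=1$ requires comparing the centering conditions defining $I_1^*(\rd)$ and $\mathcal{C}_1^*(\rd)$.
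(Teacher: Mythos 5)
Your base case is essentially the paper's own argument: your splitting of the integral at $t_b=(b^\alpha-1)/\alpha$ is the paper's cumulant identity $C_\mu(z)-b^\alpha C_\mu(b^{-1}z)=\int_0^{(b^\alpha-1)/\alpha}C_{\mu_0}\bigl((1+\alpha t)^{-1/\alpha}z\bigr)dt=C_{\rho_b}(z)$ (you phrase it with random variables and justify $\rho_b\in H$ by Riemann sums, which the paper simply asserts from c.c.s.s.), and your recovery of $\mu$ as the weak limit of the $1/t_b$-th convolutions of $\rho_b$ as $b\downarrow 1$ is exactly the paper's display \eqref{C mu_0}. Your inductive step is organized differently: the paper uses the ``apply the previous case to the c.c.s.s.\ class $\mathfrak Q_\alpha(H)$'' shortcut only for $\alpha=0$ (part (ii)), and for parts (iv) and (v) it re-runs both inclusions at level $m$ with $\mathfrak Q_\alpha^{m-1}(H)$ in place of $H$; your set identity $\Phi_\alpha^m\left(H\cap\mathfrak D(\Phi_\alpha^{m+1})\right)=\Phi_\alpha^m\left(H\cap\mathfrak D(\Phi_\alpha^m)\right)\cap\mathfrak D(\Phi_\alpha)$ combined with the base case applied to $\mathfrak Q_\alpha^m(H)$ is a cleaner packaging and is logically sound --- but only modulo the containment $L^{\la\alpha\ra}(\rd)\cap\mathfrak D(\Phi_\alpha)\subset\mathcal K_\alpha$ that you isolate at the end.

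That containment is where a genuine gap remains, and at $\alpha=1$ it is not a side condition one can settle by ``comparing the centering conditions'': the inclusion $L^{\la1\ra}(\rd)\cap I_1^*(\rd)\subset\mathcal C_1^*(\rd)$ is the single hardest step of the whole theorem, and the paper devotes roughly half of its proof of (iv) to it. The condition defining $I_1^*(\rd)$ is a moment/principal-value condition on $\mu_0$ itself (in particular $\int_\rd x\,\mu_0(dx)=0$), whereas membership in $\mathcal C_1^*(\rd)$ demands an identity expressing $\gamma_0$ through the $k$-function of the polar decomposition of the L\'evy measure $\nu_0$. To pass from one to the other the paper (a) uses $\mu_0\in L^{\la1\ra}(\rd)\cap\mathcal C_1(\rd)$ to write $\nu_0$ in polar form $r^{-2}k_{0,\xi}(r)\,dr\,\lambda_0(d\xi)$ with $k_{0,\xi}(r)\to 0$ as $r\to\infty$; (b) invokes Lemma 5.1 of Maejima--Matsui--Suzuki to represent $\nu_0(B)=\int_0^1\nu_1(s^{-1}B)s^{-2}ds$ with $\int_\rd(|x|^2\wedge|x|)\nu_1(dx)<\infty$; and (c) converts the mean-zero identity $\gamma_0=-\int_\rd x|x|^2(1+|x|^2)^{-1}\nu_0(dx)$ by a Fubini computation into exactly the limit identity appearing in the definition of $\mathcal C_1^*(\rd)$. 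Without this, your induction does not close at $\alpha=1$: at level $m$ you only know $\mu_0\in\mathfrak Q_1^m(H)\cap\mathfrak D(\Phi_1)$, and you cannot feed $\mu_0$ into the inductive hypothesis, which requires membership in $\mathcal C_1^*(\rd)$. The remaining cases are fine as you say ($I_\alpha(\rd)\subset\mathcal C_\alpha(\rd)$ for $0<\alpha<1$, $I_\alpha^0(\rd)\subset\mathcal C_\alpha^0(\rd)$ for $1<\alpha<2$, and nothing to check for $\alpha\le 0$), so the missing piece is precisely this $\alpha=1$ computation.
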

\begin{proof}
(i) It is proved in a similar way to (v).

(ii) We prove the statement by induction. The case for $m=0$ comes from Lemma 4.1 of \citet{Barndorff-NielsenMaejimaSato2006} and Theorem \ref{characterization_c.f.} (i) with $\alpha=0$ of this paper.
Now assume that the statement is valid for $m-1$ with $m\geq 2$ in place of $m$ and let us prove
$\Phi_0^m\left(H\cap \mathfrak D(\Phi_0^m)\right) = \mathfrak Q_0^m(H)$. 
If we put $H':=\Phi_0\left(H\cap \mathfrak D(\Phi_0)\right)$, then it is equal to $\mathfrak Q_0(H)$
by the statement for $m=0$ and thus it is c.c.s.s. Applying the assumption of induction to $H'$ instead of $H$, we have that $\Phi_0^{m-1}\left(\Phi_0\left(H\cap \mathfrak D(\Phi_0)\right)\cap \mathfrak D(\Phi_0^{m-1})\right) = \mathfrak Q_0^m(H)$. Since it is easy to see that $\Phi_0\left(H\cap \mathfrak D(\Phi_0)\right)\cap \mathfrak D(\Phi_0^{m-1})=\Phi_0\left(H\cap \mathfrak D(\Phi_0^m)\right)$, it follows that $\Phi_0^m\left(H\cap \mathfrak D(\Phi_0^m)\right) = \mathfrak Q_0^m(H)$.

(iii) It is proved in a similar way to (v).

(iv) We prove the statement by induction. 
Let us prove the case for $m=1$. We first show that 
$\Phi_1\left(H\cap \mathfrak D(\Phi_1)\right) \subset \mathfrak Q_1(H)\cap \mathcal C_1^*(\rd)$.
If $\mu\in\Phi_1\left(H\cap \mathfrak D(\Phi_1)\right)$, 
then $\mu=\Phi_1(\mu_0)$ for some $\mu_0\in H\cap \mathfrak D(\Phi_1)$.
We have, for any $b>1$ and $z\in\rd$,
$$
C_\mu(z)-b C_\mu(b^{-1}z)
=\int_0^{b-1}C_{\mu_0}\left((1+t)^{-1}z\right)dt
=C_{\rho_b}(z),
$$
where 
$$
\rho_b=\law\left(\int_0^{b-1}(1+t)^{-1}dX_t^{(\mu_0)}\right).
$$
Since $H$ is c.c.s.s., $\rho_b\in H$ for all $b>1$.
Then it follows from Theorem \ref{characterization_c.f.} (i) that $\mu\in\mathfrak Q_1(H)$.
Since $\mu\in\mathfrak R(\Phi_1)\subset \mathcal C_1^*(\rd)$, 
we have $\mu\in\mathfrak Q_1(H)\cap \mathcal C_1^*(\rd)$.
We next show that 
$\Phi_1\left(H\cap \mathfrak D(\Phi_1)\right) \supset \mathfrak Q_1(H)\cap \mathcal C_1^*(\rd)$.
If $\mu\in \mathfrak Q_1(H)\cap \mathcal C_1^*(\rd)$, then $\mu\in L^{\la1\ra}(\rd)\cap \mathcal C_1^*(\rd)$ and hence
$\mu=\Phi_1(\mu_0)$ for some $\mu_0\in\mathfrak D(\Phi_1)$.
On the other hand, due to Theorem \ref{characterization_c.f.} (i), for each $b>1$, there is 
$\rho_b\in H$ satisfying \eqref{def_characteristic_function} with $\alpha=1$.
Then, it follows that
\begin{align}\label{C mu_0 alpha=1}
C_{\mu_0}(z)
&=\lim_{b\downarrow 1}\frac{1}{b-1}\int_0^{b-1}C_{\mu_0}\left((1+ t)^{-1}z\right)dt\\
\notag&=\lim_{b\downarrow 1}\frac{1}{b-1}\left\{C_\mu(z)-b C_\mu(b^{-1}z)\right\}
=\lim_{b\downarrow 1}\frac{1}{b-1}C_{\rho_b}(z).
\end{align}
This entails $\mu_0\in H$ since $H$ is c.c.s.s.
Then $\mu=\Phi_1(\mu_0)\in \Phi_1\left(H\cap \mathfrak D(\Phi_1)\right)$.
Therefore the case for $m=0$ is proved.
Now assume that the statement is valid for $m-1$ with $m\geq 2$ in place of $m$ and let us prove
$\Phi_1^m\left(H\cap \mathfrak D(\Phi_1^m)\right) = \mathfrak Q_1^m(H)\cap \mathcal C_1^*(\rd)$. 
We first show that $\Phi_1^m\left(H\cap \mathfrak D(\Phi_1^m)\right) \subset  \mathfrak Q_1^m(H)\cap \mathcal C_1^*(\rd)$. 
If $\mu\in \Phi_{\alpha}^m\left(H\cap \mathfrak D(\Phi_1^m)\right)$, then $\mu=\Phi_1^m(\mu_0)$ for some 
$\mu_0\in H\cap\mathfrak D(\Phi_1^m)$.
We have, for any $b>1$ and $z\in\rd$,
$$
C_\mu(z)-b C_\mu(b^{-1}z)
=\int_0^{b-1}C_{\Phi_1^{m-1}(\mu_0)}\left((1+ t)^{-1}z\right)dt
=C_{\rho_b}(z),
$$
where 
$$
\rho_b=\law\left(\int_0^{b-1}(1+ t)^{-1}dX_t^{\left(\Phi_1^{m-1}(\mu_0)\right)}\right).
$$
Since $\Phi_1^{m-1}(\mu_0)\in \mathfrak Q_1^{m-1}(H)\cap \mathcal C_1^*(\rd)$ by the assumption of induction and 
$\mathfrak Q_1^{m-1}(H)$ is c.c.s.s., we have
$\rho_b\in\mathfrak  Q_1^{m-1}(H)$ for each $b>1$.
Then, $\mu\in\mathfrak  Q_1^m(H)$ due to Proposition \ref{Q_alpha^m_property} (ii).
Since $\mu\in \mathfrak R(\Phi_1^m)\subset \mathfrak R(\Phi_1)\subset \mathcal C_1^*(\rd)$, we have
$\mu\in\mathfrak Q_1^m(H)\cap \mathcal C_1^*(\rd)$.
We next show that $\Phi_1^m\left(H\cap \mathfrak D(\Phi_1^m)\right) \supset \mathfrak Q_1^m(H)\cap \mathcal C_1^*(\rd)$.
If $\mu\in \mathfrak Q_1^m(H)\cap \mathcal C_1^*(\rd)$, then $\mu\in L^{\la1\ra}(\rd)\cap \mathcal C_1^*(\rd)$
since $\mathfrak Q_1^m(H)\subset \mathfrak Q_1^m(I(\rd))=L^{\la1\ra}_{m-1}(\rd)\subset L^{\la1\ra}(\rd)$.
Hence
$\mu=\Phi_1(\mu_0)$ for some $\mu_0\in\mathfrak D(\Phi_1)$.
On the other hand, due to Proposition \ref{Q_alpha^m_property} (ii), for each $b>1$, there is 
$\rho_b\in\mathfrak Q_1^{m-1}(H)$ satisfying \eqref{def_characteristic_function}.
Then, \eqref{C mu_0 alpha=1} holds.
Since $\mathfrak Q_1^{m-1}(H)$ is c.c.s.s.,
$\mu_0\in \mathfrak Q_1^{m-1}(H)$.
Noting that $\mu_0\in\mathfrak D(\Phi_1)=I_1^*(\rd)$, we have $\mu_0\in \mathfrak Q_1^{m-1}(H)\cap I_1^*(\rd)$.
Since $\mathfrak Q_1^{m-1}(H)\subset\mathfrak Q_1^{m-1}(I(\rd))=L^{\la1\ra}_{m-2}(\rd)\subset L^{\la1\ra}(\rd)$, we have that 
$\mu_0={\mu_0}_{(A_0,\nu_0,\gamma_0)}\in L^{\la1\ra}(\rd)\cap \mathcal C_1(\rd)$.
Therefore $\nu_0$ has the polar decomposition as follows:
$$
\nu_0(B)=\int_S\lambda_0(d\xi)\int_0^\infty\1_B(r\xi)r^{-2}k_{0,\xi}(r)dr,\quad B\in\B(\rd\setminus \{0\}),
$$
where $k_{0,\xi}(r)$ is right-continuous and nonincreasing in $r\in(0,\infty)$ 
and measurable in $\xi\in S$, and satisfies $\lim_{r\to\infty}k_{0,\xi}(r)=0$ for each $\xi\in S$.
Then Lemma 5.1 and its proof of \citet{MaejimaMatsuiSuzuki} yield that
$$
\nu_0(B)=\int_0^1\nu_1(s^{-1}B)s^{-2}ds,\ B\in \B(\rd\setminus \{0\}),\qquad \int_\rd(|x|^2\wedge |x|)\nu_1(dx)<\infty,
$$
with
$$
\nu_1(B)=-\int_S\lambda_0(d\xi)\int_0^\infty\1_B(r\xi)r^{-1}dk_{0,\xi}(r),\quad B\in\B(\rd\setminus \{0\}).
$$
Taking into account that $\mu_0\in I_1^*(\rd)$, we have $\int_\rd x\mu_0(dx)=0$, which is equivalent to that
\begin{align*}
\gamma_0&
=-\int_\rd\frac{x|x|^2}{1+|x|^2}\nu_0(dx)
=-\int_0^1s^{-2}ds\int_\rd\frac{sx|sx|^2}{1+|sx|^2}\nu_1(dx)\\
&=\int_0^1s ds \int_S\xi\lambda_0(d\xi)\int_0^\infty 
\frac{r^2}{1+s^2r^2}dk_{0,\xi}(r).
\end{align*}
This yields $\mu_0\in \mathcal C_1^*(\rd)$ and hence $\mu_0\in \mathfrak Q_1^{m-1}(H)\cap \mathcal C_1^*(\rd)$.
By the assumption of induction, we have $\mu_0=\Phi_1^{m-1}(\mu_2)$ for some $\mu_2\in H\cap\mathfrak D(\Phi_1^{m-1})$.
Then $\mu=\Phi_1(\mu_0)=\Phi_1^m(\mu_2)\in\Phi_1^m\left(H\cap \mathfrak D(\Phi_1^m)\right)$.

(v) We prove the statement by induction. 
Let us prove the case for $m=1$. We first show that 
$\Phi_\alpha\left(H\cap \mathfrak D(\Phi_\alpha)\right) \subset \mathfrak Q_\alpha(H)\cap \mathcal C_\alpha^0(\rd)$.
If $\mu\in\Phi_\alpha\left(H\cap \mathfrak D(\Phi_\alpha)\right)$, 
then $\mu=\Phi_\alpha(\mu_0)$ for some $\mu_0\in H\cap \mathfrak D(\Phi_\alpha)$.
We have, for any $b>1$ and $z\in\rd$,
$$
C_\mu(z)-b^\alpha C_\mu(b^{-1}z)
=\int_0^{(b^\alpha-1)/\alpha}C_{\mu_0}\left((1+\alpha t)^{-1/\alpha}z\right)dt
=C_{\rho_b}(z),
$$
where 
$$
\rho_b=\law\left(\int_0^{(b^\alpha-1)/\alpha}(1+\alpha t)^{-1/\alpha}dX_t^{(\mu_0)}\right).
$$
Since $H$ is c.c.s.s., $\rho_b\in H$ for all $b>1$.
Then it follows from Theorem \ref{characterization_c.f.} (i) that $\mu\in\mathfrak Q_\alpha(H)$.
Since $\mu\in\mathfrak R(\Phi_\alpha)\subset \mathcal C_\alpha^0(\rd)$, 
we have $\mu\in\mathfrak Q_\alpha(H)\cap \mathcal C_\alpha^0(\rd)$.
We next show that 
$\Phi_\alpha\left(H\cap \mathfrak D(\Phi_\alpha)\right) \supset \mathfrak Q_\alpha(H)\cap \mathcal C_\alpha^0(\rd)$.
If $\mu\in \mathfrak Q_\alpha(H)\cap \mathcal C_\alpha^0(\rd)$, 
then $\mu\in L^{\la\alpha\ra}(\rd)\cap \mathcal C_\alpha^0(\rd)$ and hence
$\mu=\Phi_\alpha(\mu_0)$ for some $\mu_0\in\mathfrak D(\Phi_\alpha)$.
On the other hand, due to Theorem \ref{characterization_c.f.} (i), for each $b>1$, there is 
$\rho_b\in H$ satisfying \eqref{def_characteristic_function}.
Then, it follows that
\begin{align}\label{C mu_0}
C_{\mu_0}(z)
&=\lim_{b\downarrow 1}\frac{\alpha}{b^\alpha-1}\int_0^{(b^\alpha-1)/\alpha}C_{\mu_0}\left((1+\alpha t)^{-1/\alpha}z\right)dt\\
\notag&=\lim_{b\downarrow 1}\frac{\alpha}{b^\alpha-1}\left\{C_\mu(z)-b^\alpha C_\mu(b^{-1}z)\right\}
=\lim_{b\downarrow 1}\frac{\alpha}{b^\alpha-1}C_{\rho_b}(z).
\end{align} 
This entails $\mu_0\in H$ since $H$ is c.c.s.s.
Then $\mu=\Phi_\alpha(\mu_0)\in \Phi_\alpha\left(H\cap \mathfrak D(\Phi_\alpha)\right)$.
Therefore the case for $m=0$ is proved.
Now assume that the statement is valid for $m-1$ with $m\geq 2$ in place of $m$ and let us prove
$\Phi_{\alpha}^m\left(H\cap \mathfrak D(\Phi_{\alpha}^m)\right) = \mathfrak Q_\alpha^m(H)\cap \mathcal C_\alpha^0(\rd)$. 
We first show that $\Phi_{\alpha}^m\left(H\cap \mathfrak D(\Phi_{\alpha}^m)\right) 
\subset  \mathfrak Q_\alpha^m(H)\cap \mathcal C_\alpha^0(\rd)$. 
If $\mu\in \Phi_{\alpha}^m\left(H\cap \mathfrak D(\Phi_{\alpha}^m)\right)$, then $\mu=\Phi_\alpha^m(\mu_0)$ for some 
$\mu_0\in H\cap\mathfrak D(\Phi_\alpha^m)$.
We have, for any $b>1$ and $z\in\rd$,
$$
C_\mu(z)-b^\alpha C_\mu(b^{-1}z)
=\int_0^{(b^\alpha-1)/\alpha}C_{\Phi_\alpha^{m-1}(\mu_0)}\left((1+\alpha t)^{-1/\alpha}z\right)dt
=C_{\rho_b}(z),
$$
where 
$$\rho_b=\law\left(\int_0^{(b^\alpha-1)/\alpha}(1+\alpha t)^{-1/\alpha}dX_t^{\left(\Phi_\alpha^{m-1}(\mu_0)\right)}\right).
$$
Since $\Phi_\alpha^{m-1}(\mu_0)\in \mathfrak Q_\alpha^{m-1}(H)\cap \mathcal C_\alpha^0(\rd)$ 
by the assumption of induction and 
$\mathfrak Q_\alpha^{m-1}(H)$ is c.c.s.s., we have
$\rho_b\in\mathfrak  Q_\alpha^{m-1}(H)$ for each $b>1$.
Then, $\mu\in\mathfrak  Q_\alpha^m(H)$ due to Proposition \ref{Q_alpha^m_property} (ii).
Since $\mu\in \mathfrak R(\Phi_{\alpha}^m)\subset \mathfrak R(\Phi_{\alpha})\subset \mathcal C_\alpha^0(\rd)$, we have
$\mu\in\mathfrak  Q_\alpha^m(H)\cap \mathcal C_\alpha^0(\rd)$.
We next show that $\Phi_{\alpha}^m\left(H\cap \mathfrak D(\Phi_{\alpha}^m)\right) \supset  
\mathfrak Q_\alpha^m(H)\cap \mathcal C_\alpha^0(\rd)$.
If $\mu\in \mathfrak Q_\alpha^m(H)\cap \mathcal C_\alpha^0(\rd)$, 
then $\mu\in L^{\la\alpha\ra}(\rd)\cap \mathcal C_\alpha^0(\rd)$
since $\mathfrak Q_\alpha^m(H)\subset \mathfrak Q_\alpha^m(I(\rd))=L^{\la\alpha\ra}_{m-1}(\rd)\subset L^{\la\alpha\ra}(\rd)$.
Hence
$\mu=\Phi_\alpha(\mu_0)$ for some $\mu_0\in\mathfrak D(\Phi_\alpha)$.
On the other hand, due to Proposition \ref{Q_alpha^m_property} (ii), for each $b>1$, there is 
$\rho_b\in\mathfrak Q_\alpha^{m-1}(H)$ satisfying \eqref{def_characteristic_function}.
Then, \eqref{C mu_0} holds.
Since $\mathfrak Q_\alpha^{m-1}(H)$ is c.c.s.s.,
$\mu_0\in \mathfrak Q_\alpha^{m-1}(H)$.
Noting that $\mu_0\in\mathfrak D(\Phi_\alpha)=I_\alpha^0(\rd)\subset \mathcal C_\alpha^0(\rd)$, we have 
$\mu_0\in \mathfrak Q_\alpha^{m-1}(H)\cap \mathcal C_\alpha^0(\rd)$.
By the assumption of induction, we have $\mu_0=\Phi_\alpha^{m-1}(\mu_1)$ for some 
$\mu_1\in H\cap\mathfrak D(\Phi_\alpha^{m-1})$.
Then $\mu=\Phi_\alpha(\mu_0)=\Phi_\alpha^m(\mu_1)\in\Phi_{\alpha}^m\left(H\cap \mathfrak D(\Phi_{\alpha}^m)\right)$.
\end{proof}
Let $\alpha<2$ and let $H\subset I(\rd)$ be c.c.s.s.
Then it follows from Proposition \ref{Q_alpha^m_property} (iii) and the theorem above that
$\Phi_{\alpha}^m\left(H\cap \mathfrak D(\Phi_{\alpha}^m)\right),m\in\N$ are nested subclasses of 
$\Phi_{\alpha}\left(H\cap \mathfrak D(\Phi_{\alpha})\right)$.
Using the results in the previous section, we obtain 
the limit $\lim_{m\to\infty}\Phi_{\alpha}^m\left(H\cap 
\mathfrak D(\Phi_{\alpha}^m)\right)=\bigcap_{m=1}^\infty\Phi_{\alpha}^m\left(H\cap \mathfrak D(\Phi_{\alpha}^m)\right)$ as follows.
\begin{cor}\label{Phi_alpha^infty(H)}
Let $H\subset I(\rd)$ be c.c.s.s.
\begin{enumerate}[(i)]
\item If $\alpha< 0$ and $H\supset S(\rd)$, 
then
$$
\lim_{m\to\infty}\Phi_{\alpha}^m(H) = \mathfrak Q_\alpha^\infty(H)=\overline{S(\rd)}.
$$
\item If $\alpha= 0$ and $H\supset S(\rd)$, 
then
$$
\lim_{m\to\infty}\Phi_0^m(H\cap \mathfrak D(\Phi_0^m)) = \mathfrak Q_0^\infty(H)=\overline{S(\rd)}.
$$
\item If $0<\alpha<1$ and $H\supset \bigcup_{\beta\in[\alpha,2]}S_\beta(\rd)$, then 
\begin{align*}
\lim_{m\to\infty}\Phi_{\alpha}^m&\left(H\cap \mathfrak D(\Phi_{\alpha}^m)\right) = 
\mathfrak Q_\alpha^\infty(H)\cap \mathcal C_\alpha(\rd)\\
&=
\overline{\bigcup_{\beta\in[\alpha,2]}S_\beta(\rd)}\cap \mathcal C_\alpha(\rd)= 
L^{\la\alpha\ra}_\infty(\rd)\cap \mathcal C_\alpha(\rd).
\end{align*}
\item If $\alpha=1$ and $H\supset \bigcup_{\beta\in[1,2]}S_\beta(\rd)$, then 
\begin{align*}
\lim_{m\to\infty}\Phi_1^m&\left(H\cap \mathfrak D(\Phi_1^m)\right) = 
\mathfrak Q_1^\infty(H)\cap \mathcal C_1^*(\rd)\\
&=
\overline{\bigcup_{\beta\in[1,2]}S_\beta(\rd)}\cap \mathcal C_1^*(\rd)= L^{\la1\ra}_\infty(\rd)\cap \mathcal C_1^*(\rd).
\end{align*}
\item If $1<\alpha<2$ and $H\supset \bigcup_{\beta\in[\alpha,2]}S_\beta(\rd)$, then 
\begin{align*}
\lim_{m\to\infty}\Phi_{\alpha}^m&\left(H\cap \mathfrak D(\Phi_{\alpha}^m)\right) = 
\mathfrak Q_\alpha^\infty(H)\cap \mathcal C_\alpha^0(\rd)\\
&=
\overline{\bigcup_{\beta\in[\alpha,2]}S_\beta(\rd)}\cap \mathcal C_\alpha^0(\rd)= 
L^{\la\alpha\ra}_\infty(\rd)\cap \mathcal C_\alpha^0(\rd).
\end{align*}
\end{enumerate}
\end{cor}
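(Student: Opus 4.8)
The plan is to assemble three ingredients already in hand: the range identities of Theorem \ref{Phi_alpha^m(H)}, the nesting of the classes $\mathfrak Q_\alpha^m(H)$, and the evaluation of the iterated limit $\mathfrak Q_\alpha^\infty(H)$ carried out in the preceding theorem; and then to translate the answer into the language of $L^{\la\alpha\ra}_\infty(\rd)$ via Theorem \ref{L infty}. Fixing a case, I would first rewrite the $m$-th range. For $\alpha<0$, Theorem \ref{Phi_alpha^m(H)} (i) gives $\Phi_\alpha^m(H)=\mathfrak Q_\alpha^m(H)$ with no auxiliary intersection, and Theorem \ref{Phi_alpha^m(H)} (ii) handles $\alpha=0$ the same way. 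For $0<\alpha<2$ the corresponding parts of Theorem \ref{Phi_alpha^m(H)} express $\Phi_\alpha^m(H\cap\mathfrak D(\Phi_\alpha^m))$ as $\mathfrak Q_\alpha^m(H)$ intersected with a fixed class, call it $\mathcal C$ --- namely $\mathcal C_\alpha(\rd)$, $\mathcal C_1^*(\rd)$, or $\mathcal C_\alpha^0(\rd)$ --- that does not depend on $m$. Since by Proposition \ref{Q_alpha^m_property} (iii) the sequence $\mathfrak Q_\alpha^m(H)$ is decreasing, the sets $\Phi_\alpha^m(H\cap\mathfrak D(\Phi_\alpha^m))$ are nested, so their limit is the countable intersection over $m$.

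The next step is the interchange
\[
\bigcap_{m=1}^\infty\bigl(\mathfrak Q_\alpha^m(H)\cap\mathcal C\bigr)=\Bigl(\bigcap_{m=1}^\infty\mathfrak Q_\alpha^m(H)\Bigr)\cap\mathcal C=\mathfrak Q_\alpha^\infty(H)\cap\mathcal C,
\]
which is a trivial set identity because $\mathcal C$ is independent of $m$; this reduces everything to evaluating $\mathfrak Q_\alpha^\infty(H)$. For that I would invoke the preceding theorem: under $H\supset S(\rd)$ with $\alpha\le 0$ one gets $\mathfrak Q_\alpha^\infty(H)=\overline{S(\rd)}$, and under $H\supset\bigcup_{\beta\in[\alpha,2]}S_\beta(\rd)$ with $0<\alpha<2$ one gets $\mathfrak Q_\alpha^\infty(H)=\overline{\bigcup_{\beta\in[\alpha,2]}S_\beta(\rd)}$. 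Theorem \ref{L infty} then identifies these closures with $L^{\la\alpha\ra}_\infty(\rd)$, completing each chain of equalities. In cases (i) and (ii) I would additionally note that $\mathcal C_\alpha(\rd)=I(\rd)$ for $\alpha\le 0$, so no residual intersection appears and the limit is exactly $\overline{S(\rd)}$.

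There is no deep obstacle here: the corollary is a clean synthesis of earlier results, and the only points requiring care are bookkeeping ones. I would check that the auxiliary class $\mathcal C$ in each case is genuinely $m$-independent, so that it may be pulled out of the intersection, and I would match the hypothesis on $H$ to the version of the preceding theorem being applied ($S(\rd)$ for $\alpha\le 0$ versus $\bigcup_{\beta\in[\alpha,2]}S_\beta(\rd)$ for $\alpha>0$). The mild asymmetry between the $\alpha\le 0$ cases, where the limit collapses to $\overline{S(\rd)}$, and the $0<\alpha<2$ cases, where the intersection with $\mathcal C$ survives, is the one place where I would make sure the stated form of the conclusion is reproduced faithfully.
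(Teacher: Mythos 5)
Your proposal is correct and coincides with the paper's own (essentially unwritten) argument: the paper presents this corollary as an immediate synthesis of Theorem \ref{Phi_alpha^m(H)}, the nesting from Proposition \ref{Q_alpha^m_property} (iii), the evaluation of $\mathfrak Q_\alpha^\infty(H)$ in the theorem closing Section 3, and the identification with $L^{\la\alpha\ra}_\infty(\rd)$ via Theorem \ref{L infty}. Your bookkeeping points --- the $m$-independence of the auxiliary class $\mathcal C$ and the matching of hypotheses on $H$ to the two cases of that theorem --- are exactly the checks the paper's presentation takes for granted.
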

\begin{rem}\label{application}
Let $\Phi_f$ be a stochastic integral mapping defined by \eqref{mapping}.
It is a interesting problem to characterize the limit $\lim_{m\to\infty}\mathfrak R(\Phi_f^{m+1})$
as in \citet{MaejimaSato2009}.
Corollary \ref{Phi_alpha^infty(H)} can be applied to this problem as follows.
Assume that $\Phi_f$ is decomposed in the form that $\Phi_f=\Phi_\alpha\circ\Phi_g=\Phi_g\circ\Phi_\alpha$ for some $\alpha\in(-\infty,2)$ and some stochastic integral mapping $\Phi_g$. 
Then $\mathfrak R(\Phi_f^m)=\Phi_\alpha^m\left(\mathfrak R(\Phi_g^m)\cap\mathfrak D(\Phi_\alpha^m)\right)$, so that
$\Phi_\alpha^m\left(H\cap\mathfrak D(\Phi_\alpha^m)\right)\subset \mathfrak R(\Phi_f^m)\subset\mathfrak R(\Phi_\alpha^m)$,
where $H=\lim_{m\to\infty}\mathfrak R(\Phi_g^{m+1})$.
If $H$ fulfills the conditions in Corollary \ref{Phi_alpha^infty(H)}, then we have $\lim_{m\to\infty}\mathfrak R(\Phi_f^{m+1})=\lim_{m\to\infty}\mathfrak R(\Phi_\alpha^{m+1})$.
An example of this application is found in \citet{MaejimaUeda2009e}.
This is why we consider 
nested classes of $L^{\la\alpha\ra}(\rd)$ based on not only $I(\rd)$ but also 
general c.c.s.s.\ $H\subset I(\rd)$.
\end{rem}
Let $\mu\in L_\infty(\rd)=L^{\la0\ra}_\infty(\rd)$, 
and let $\Gamma$ and $\lambda_\beta$ be the measures in Theorem \ref{L^<alpha>_infty} with $\alpha=0$.
We call $\Gm$ the $\Gm$-measure of $\mu\in L_{\infty}(\R^d)$, sometimes
denoted by $\Gm^{\mu}$. We also  write $\ld_{\bt}^\mu$ for $\ld_{\bt}$.
For a set $A\in \B((0,2))$,
let $L_{\infty}^{A}(\R^d)$
denote the class of $\mu\in L_{\infty}(\R^d)$ with $\Gm^{\mu}$
satisfying $\Gm^{\mu}\left((0,2)\setminus A\right)=0$.
Note that $L^{\la\alpha\ra}_\infty(\rd)=L^{[\alpha,2)}_\infty(\rd)$ for $\alpha\in(0,2)$ due to Theorem \ref{L^<alpha>_infty}.
\begin{lem}\label{L^{<alpha>}_infty cap C_alpha}
Let $0<\al <2$.
We have
$L^{\la\alpha\ra}_\infty(\rd)\cap \mathcal C_\alpha(\rd)=L^{(\alpha,2)}_\infty(\rd)$.
\end{lem}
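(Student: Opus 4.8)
The plan is to reduce the claimed set identity to a single statement about the $\Gm$-measure, and then to evaluate the tail integral defining $\mathcal C_\al$ by hand. Since it is noted just above that $L^{\la\al\ra}_\infty(\rd)=L^{[\al,2)}_\infty(\rd)$, and since obviously $L^{(\al,2)}_\infty(\rd)\subset L^{[\al,2)}_\infty(\rd)$, every $\mu$ entering either side of the identity lies in $L^{\la\al\ra}_\infty(\rd)$, so its $\Gm$-measure $\Gm^\mu$ is carried by $[\al,2)$. The only distinction between $[\al,2)$ and $(\al,2)$ is the endpoint $\al$, so the lemma is equivalent to the assertion that for $\mu\in L^{\la\al\ra}_\infty(\rd)$ one has $\mu\in\mathcal C_\al(\rd)$ if and only if $\Gm^\mu(\{\al\})=0$.

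First I would insert the representation of the L\'evy measure $\nu$ from Theorem \ref{L^<alpha>_infty} into the tail integral. Using Tonelli's theorem and the fact that each $\ld_\bt$ is a probability measure on $S$, so that $\int_S\ld_\bt(d\xi)\int_r^\infty s^{-\bt-1}ds=r^{-\bt}/\bt$, this gives
$$
r^\al\int_{|x|>r}\nu(dx)=\int_{[\al,2)}\frac{r^{\al-\bt}}{\bt}\,\Gm^\mu(d\bt),\qquad r>0.
$$

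Second I would pass to the limit $r\to\infty$ inside this integral. Separating the atom at $\bt=\al$, whose contribution equals $\Gm^\mu(\{\al\})/\al$ for every $r$, one has on $(\al,2)$ that the integrand $r^{\al-\bt}/\bt\to 0$ pointwise. For $r\ge 1$ and $\bt>\al$ the bound $r^{\al-\bt}/\bt\le 1/\bt$ holds, and $1/\bt$ is $\Gm^\mu$-integrable on $[\al,2)$ by the mass condition $\int_{[\al,2)}(\bt^{-1}+(2-\bt)^{-1})\,\Gm^\mu(d\bt)<\infty$ of Theorem \ref{L^<alpha>_infty}. Dominated convergence therefore yields
$$
\lim_{r\to\infty}r^\al\int_{|x|>r}\nu(dx)=\frac{1}{\al}\,\Gm^\mu(\{\al\}),
$$
which vanishes exactly when $\Gm^\mu(\{\al\})=0$. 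This establishes the equivalence stated above and hence both inclusions of the lemma at once (the degenerate case $\nu=0$, i.e.\ $\Gm^\mu=0$, being trivially covered).

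I do not expect a deep obstacle here; the only point requiring care is making the interchange of limit and integral rigorous, and the integrability condition on $\Gm^\mu$ supplied by Theorem \ref{L^<alpha>_infty} furnishes precisely the dominating function needed. The remaining verification, that the representation of $\nu$ indeed applies, is immediate since $\mu\in L^{\la\al\ra}_\infty(\rd)$.
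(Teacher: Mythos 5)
Your proof is correct and takes essentially the same route as the paper's: both insert the polar representation of $\nu$ from Theorem \ref{L^<alpha>_infty}, compute $r^\al\int_{|x|>r}\nu(dx)=\int_{[\al,2)}\bt^{-1}r^{\al-\bt}\,\Gm^\mu(d\bt)$, and pass to the limit $r\to\infty$ to identify the limit as $\al^{-1}\Gm^\mu(\{\al\})$, so that membership in $\mathcal C_\al(\rd)$ is equivalent to the absence of an atom at $\al$. The only cosmetic difference is that the paper invokes bounded convergence (the integrand is bounded by $\al^{-1}$ for $r\ge1$ and $\Gm^\mu$ is finite) where you use domination by $1/\bt$; both are justified by the same mass condition on $\Gm^\mu$.
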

\begin{proof}
Let $\mu\in L^{\la\alpha\ra}_\infty(\rd)=L^{[\alpha,2)}_\infty(\rd)$ with L\'evy measure $\nu$. Then, 
$$
\nu(B)=\int_{[\alpha,2)}\Gamma^\mu(d\beta)\int_S\lambda_\beta^\mu(d\xi)\int_0^\infty\1_B(r\xi)r^{-\beta-1}dr,
\quad B\in\B(\rd).
$$
Since
$$
r^\alpha\int_{|x|>r}\nu(dx)
=r^\alpha\int_{[\alpha,2)}\Gamma^\mu(d\beta)\int_r^\infty u^{-\beta-1}du
=\int_{[\alpha,2)}\beta^{-1}r^{\alpha-\beta}\Gamma^\mu(d\beta),
$$
it follows from the bounded convergence theorem that 
$$
\lim_{r\to\infty}r^\alpha\int_{|x|>r}\nu(dx)=\alpha^{-1}\Gamma^\mu(\{\alpha\}).
$$
Thus $\mu\in \mathcal C_\alpha(\rd)$ if and only if $\Gamma^\mu(\{\alpha\})=0$ under the condition 
$\mu\in L^{\la\alpha\ra}_\infty(\rd)=L^{[\alpha,2)}_\infty(\rd)$.
\end{proof}
Using the lemma above, we have the following.
\begin{thm}\label{Phi_alpha^infty(H) 2}
Let $H\subset I(\rd)$ be c.c.s.s.
\begin{enumerate}[(i)]
\item If $0<\alpha<1$ and $H\supset \bigcup_{\beta\in[\alpha,2]}S_\beta(\rd)$, then 
$$
\lim_{m\to\infty}\Phi_{\alpha}^m\left(H\cap \mathfrak D(\Phi_{\alpha}^m)\right) = L^{(\alpha,2)}_\infty(\rd).
$$
\item If $\alpha=1$ and $H\supset \bigcup_{\beta\in[1,2]}S_\beta(\rd)$, then 
\begin{align*}
\lim_{m\to\infty}\Phi_1^m&\left(H\cap \mathfrak D(\Phi_1^m)\right) = 
\Biggl\{\mu=\mu_{(A,\nu,\gamma)}\in L^{(1,2)}_\infty(\rd)\colon\\ 
&\lim_{\varepsilon\downarrow 0}\int_{(1,2)}B\left(\frac{3-\beta}{2},\frac{\beta+1}{2}\right)
\frac{1-\varepsilon^{\beta-1}}{\beta-1}\Gamma^\mu(d\beta)\int_S \xi\lambda_\beta^\mu(d\xi)=-\gamma
\Biggr\}.
\end{align*}
\item If $1<\alpha<2$ and $H\supset \bigcup_{\beta\in[\alpha,2]}S_\beta(\rd)$, then 
$$
\lim_{m\to\infty}\Phi_{\alpha}^m\left(H\cap \mathfrak D(\Phi_{\alpha}^m)\right) = L^{(\alpha,2)}_\infty(\rd)\cap I_1^0(\rd).
$$
\end{enumerate}
\end{thm}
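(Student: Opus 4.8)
The plan is to obtain all three parts from Corollary \ref{Phi_alpha^infty(H)} combined with Lemma \ref{L^{<alpha>}_infty cap C_alpha}; the only substantial work lies in part (ii), where the drift condition defining $\mathcal C_1^*(\rd)$ must be rewritten in terms of the $\Gamma$-measure $\Gamma^\mu$ and the directional measures $\lambda_\beta^\mu$ of Theorem \ref{L^<alpha>_infty}. Parts (i) and (iii) are essentially one-liners. For (i), Corollary \ref{Phi_alpha^infty(H)} (iii) gives $\lim_{m\to\infty}\Phi_\alpha^m(H\cap\mathfrak D(\Phi_\alpha^m))=L^{\la\alpha\ra}_\infty(\rd)\cap\mathcal C_\alpha(\rd)$, and Lemma \ref{L^{<alpha>}_infty cap C_alpha} identifies this with $L^{(\alpha,2)}_\infty(\rd)$. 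For (iii), Corollary \ref{Phi_alpha^infty(H)} (v) gives the limit as $L^{\la\alpha\ra}_\infty(\rd)\cap\mathcal C_\alpha^0(\rd)$; since $\mathcal C_\alpha^0(\rd)=\mathcal C_\alpha(\rd)\cap I_1^0(\rd)$ by definition, Lemma \ref{L^{<alpha>}_infty cap C_alpha} yields $L^{(\alpha,2)}_\infty(\rd)\cap I_1^0(\rd)$.

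For part (ii) I would start from Corollary \ref{Phi_alpha^infty(H)} (iv), which gives the limit as $L^{\la1\ra}_\infty(\rd)\cap\mathcal C_1^*(\rd)$. Since $\mathcal C_1^*(\rd)\subset L^{\la1\ra}(\rd)\cap\mathcal C_1(\rd)$ and, by Theorem \ref{characterization radial} with $\alpha=1$, every $\mu\in L^{\la1\ra}(\rd)$ already carries a L\'evy measure of the polar form appearing in the definition of $\mathcal C_1^*(\rd)$ (so that $\wt\lambda=\lambda$ and $\wt k_\xi=k_\xi$), Lemma \ref{L^{<alpha>}_infty cap C_alpha} reduces membership to: $\mu\in L^{(1,2)}_\infty(\rd)$ together with the drift identity
\[
\lim_{\varepsilon\downarrow0}\int_\varepsilon^1 t\,dt\int_S\xi\lambda(d\xi)\int_0^\infty\frac{r^2}{1+t^2r^2}dk_\xi(r)=\gamma .
\]
Everything then hinges on evaluating the left-hand side.

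The computation I would carry out is as follows. For $\mu\in L^{(1,2)}_\infty(\rd)$, Lemma \ref{Gamma_xi} with $\alpha=1$ and $u=-\log r$ gives $k_\xi(r)=\int_{(1,2)}r^{1-\beta}\Gamma_\xi(d\beta)$, while the disintegration underlying Theorem \ref{L^<alpha>_infty} gives $\lambda(d\xi)\,\Gamma_\xi(d\beta)=\Gamma^\mu(d\beta)\,\lambda_\beta^\mu(d\xi)$. Integrating the inner $r$-integral by parts, using $\frac{d}{dr}\frac{r^2}{1+t^2r^2}=\frac{2r}{(1+t^2r^2)^2}$ and the vanishing of the boundary terms, and then scaling $s=tr$, reduces the $r$-integral to $t^{\beta-3}\int_0^\infty\frac{s^{2-\beta}}{(1+s^2)^2}ds=\tfrac12\,t^{\beta-3}B\!\left(\frac{3-\beta}{2},\frac{\beta+1}{2}\right)$. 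Performing $\int_\varepsilon^1 t\cdot t^{\beta-3}\,dt=\frac{1-\varepsilon^{\beta-1}}{\beta-1}$ and applying the disintegration, the left-hand side becomes $-\int_{(1,2)}B\!\left(\frac{3-\beta}{2},\frac{\beta+1}{2}\right)\frac{1-\varepsilon^{\beta-1}}{\beta-1}\Gamma^\mu(d\beta)\int_S\xi\lambda_\beta^\mu(d\xi)$; equating its limit to $\gamma$ produces exactly the stated condition.

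The main obstacle will be the analytic bookkeeping in this last step rather than any conceptual difficulty: justifying the interchange of integrations against the signed, nonincreasing Stieltjes measure $dk_\xi$ (via Tonelli applied to $|dk_\xi|$, controlled by $\int_{(1,2)}(\frac1\beta+\frac1{2-\beta})\Gamma^\mu(d\beta)<\infty$ from Theorem \ref{L^<alpha>_infty}), and checking that the boundary terms in the integration by parts vanish, which is where the growth of $k_\xi$ at the origin enters: one verifies $r^2k_\xi(r)=\int_{(1,2)}r^{3-\beta}\Gamma_\xi(d\beta)\to0$ as $r\downarrow0$ since $3-\beta\in(1,2)$, while $k_\xi(r)\to0$ as $r\to\infty$ since $1-\beta<0$. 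A secondary subtlety is that the integration by parts is precisely what upgrades the kernel from $(1+s^2)^{-1}$ to $(1+s^2)^{-2}$ and thereby produces the beta function with parameters summing to $2$; one must also confirm that the $\varepsilon\downarrow0$ limit exists, which again follows from the same integrability of $\Gamma^\mu$.
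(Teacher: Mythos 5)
Your proposal is correct and is structurally the same as the paper's proof: parts (i) and (iii) are exactly the paper's one-line deductions from Corollary \ref{Phi_alpha^infty(H)} and Lemma \ref{L^{<alpha>}_infty cap C_alpha}, and in part (ii) you perform the same reduction (membership in the limit is equivalent to $\mu\in L^{(1,2)}_\infty(\rd)$ plus the $\mathcal C_1^*(\rd)$ drift identity for the canonical $\lambda$, $k_\xi$ of Theorem \ref{characterization radial}), followed by the same computation of the drift integral via $k_\xi(r)=\int r^{1-\beta}\Gamma_\xi(d\beta)$ and the disintegration $\lambda(d\xi)\Gamma_\xi(d\beta)=\Gamma^\mu(d\beta)\lambda_\beta^\mu(d\xi)$. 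The only methodological difference is how the inner integral is evaluated: the paper substitutes $d(r^{1-\beta})=(1-\beta)r^{-\beta}dr$ directly, evaluates $\int_0^\infty s^{2-\beta}(1+s^2)^{-1}ds=\tfrac12 B\bigl(\tfrac{3-\beta}{2},\tfrac{\beta-1}{2}\bigr)$ by Gradshteyn--Ryzhik 3.251.2, and then needs the identity $\tfrac{\beta-1}{2}B\bigl(\tfrac{3-\beta}{2},\tfrac{\beta-1}{2}\bigr)=B\bigl(\tfrac{3-\beta}{2},\tfrac{\beta+1}{2}\bigr)$, whereas your integration by parts trades this for a boundary-term check (which you carry out correctly) and lands on $B\bigl(\tfrac{3-\beta}{2},\tfrac{\beta+1}{2}\bigr)$ directly. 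Two small repairs are needed. First, after integrating by parts the $r$-integral equals $2t^{\beta-3}\int_0^\infty s^{2-\beta}(1+s^2)^{-2}ds=t^{\beta-3}B\bigl(\tfrac{3-\beta}{2},\tfrac{\beta+1}{2}\bigr)$: you dropped the factor $2$ coming from $\tfrac{d}{dr}\tfrac{r^2}{1+t^2r^2}=\tfrac{2r}{(1+t^2r^2)^2}$, so your intermediate expression is half the true value; your final displayed formula (and hence the theorem) is nonetheless the correct one, so this is only a transcription slip, but as written your chain of equalities would produce the condition with $-2\gamma$ instead of $-\gamma$. Second, the identity $\lambda(d\xi)\Gamma_\xi(d\beta)=\Gamma^\mu(d\beta)\lambda_\beta^\mu(d\xi)$ is not literally stated in Theorem \ref{L^<alpha>_infty}; the paper derives it by forming the joint law $P(X\in d\beta,\,Y\in d\xi)=c^{-1}\bigl(\tfrac1\beta+\tfrac1{2-\beta}\bigr)\lambda(d\xi)\Gamma_\xi(d\beta)$, conditioning, and invoking the uniqueness clause of Theorem \ref{L^<alpha>_infty}; this step is also what justifies $\Gamma_\xi(\{1\})=0$ for $\lambda$-a.e.\ $\xi$ (from $\Gamma^\mu(\{1\})=0$), which you implicitly use when writing $k_\xi(r)=\int_{(1,2)}r^{1-\beta}\Gamma_\xi(d\beta)$ over $(1,2)$ rather than over $[1,2)$ as Lemma \ref{Gamma_xi} gives.
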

\begin{proof}
The statements (i) and (iii) come from Corollary \ref{Phi_alpha^infty(H)} and Lemma \ref{L^{<alpha>}_infty cap C_alpha}.

Let us prove the statement (ii).
Suppose $\mu\in L^{(1,2)}_\infty(\rd)$ with L\'evy measure $\nu$. Let $c:=\int_\rd(|x|^2\wedge 1)\nu(dx)$.
Note that $L^{(1,2)}_\infty(\rd)\subset L^{[1,2)}_\infty(\rd)=L^{\la1\ra}_\infty(\rd)\subset L^{\la1\ra}(\rd)$.
Let $\lambda$ and $k_\xi(r)$ be the ones in Theorem \ref{characterization radial} with $\alpha=1$.
It follows from Theorem \ref{monotone h-function} and Lemma \ref{Gamma_xi} with $\alpha=1$ that
$$
k_\xi(r)=\int_{[1,2)}r^{1-\beta}\Gamma_\xi(d\beta),\quad \int_{[1,2)}\left(\frac 1\beta+
\frac 1{2-\beta}\right)\Gamma_\xi(d\beta)=c,
$$
where $\Gamma_\xi,\xi\in S$, are the measures in Lemma \ref{Gamma_xi} with $\alpha=1$. 
Choosing a $[1,2)$-valued random variable $X$ and an $S$-valued random variable $Y$ with joint distribution
$$
P\left(X\in d\beta,\ Y\in d\xi\right):=c^{-1}\left(\frac 1\beta+\frac 1{2-\beta}\right)\lambda(d\xi)\Gamma_\xi(d\beta),
$$
we have
$$
\Gamma^\mu(d\beta)=c\left(\frac 1\beta+\frac 1{2-\beta}\right)^{-1}P(X\in d\beta),\quad 
\lambda_\beta^\mu(d\xi)=P(Y\in d\xi\mid X=\beta)\ \ \Gm^{\mu}\text{-a.e. }\beta
$$
from the uniqueness of $\Gamma^\mu$ and $\lambda_\beta^\mu$. 
Since $\Gamma^\mu(\{1\})=0$, it follows that $\Gamma_\xi(\{1\})=0$ $\lambda$-a.e. $\xi\in S$.
Then we have
\begin{align*}
-\int_\varepsilon^1&t dt \int_S\xi\lambda(d\xi)\int_0^\infty 
\frac{r^2}{1+t^2r^2}dk_\xi(r)\\
&=-\int_\varepsilon^1t dt \int_S\xi\lambda(d\xi)
\int_{(1,2)}\Gamma_\xi(d\beta)\int_0^\infty 
\frac{r^2}{1+t^2r^2}dr^{1-\beta}\\
&=\int_\varepsilon^1t dt \int_S\xi\lambda(d\xi)
\int_{(1,2)}(\beta-1)\Gamma_\xi(d\beta)\int_0^\infty 
\frac{r^{2-\beta}}{1+t^2r^2}dr\\
&=\int_\varepsilon^1 dt \int_S\xi\lambda(d\xi)
\int_{(1,2)}(\beta-1)t^{\beta-2}\Gamma_\xi(d\beta)\int_0^\infty 
\frac{s^{2-\beta}}{1+s^2}ds,
\end{align*}
which is, by 3.251.2 in \citet{GradshteynRyzhik2007}, equal to
\begin{align*}
\int_\varepsilon^1& dt \int_S\xi\lambda(d\xi)
\int_{(1,2)}\frac{\beta-1}2 B\left(\frac{3-\beta}{2},\frac{\beta-1}{2}\right)t^{\beta-2}\Gamma_\xi(d\beta)\\
&=\int_\varepsilon^1 dt \int_S\xi\lambda(d\xi)
\int_{(1,2)}B\left(\frac{3-\beta}{2},\frac{\beta+1}{2}\right)t^{\beta-2}\Gamma_\xi(d\beta)\\
&=\int_\varepsilon^1 dt 
\int_{(1,2)}B\left(\frac{3-\beta}{2},\frac{\beta+1}{2}\right)t^{\beta-2}\Gamma^\mu(d\beta)\int_S\xi\lambda_\beta^\mu(d\xi)\\
&=
\int_{(1,2)}B\left(\frac{3-\beta}{2},\frac{\beta+1}{2}\right)\frac{1-\varepsilon^{\beta-1}}{\beta-1}
\Gamma^\mu(d\beta)\int_S\xi\lambda_\beta^\mu(d\xi).
\end{align*}
Thus, under the condition $\mu\in L^{(1,2)}_\infty(\rd)$, 
$$
\lim_{\varepsilon\downarrow 0}\int_\varepsilon^1t dt \int_S\xi\lambda(d\xi)\int_0^\infty 
\frac{r^2}{1+t^2r^2}dk_\xi(r)=\gamma
$$
if and only if
$$
\lim_{\varepsilon\downarrow 0}\int_{(1,2)}B\left(\frac{3-\beta}{2},\frac{\beta+1}{2}\right)\frac{1-\varepsilon^{\beta-1}}{\beta-1}\Gamma^\mu(d\beta)\int_S\xi\lambda_\beta^\mu(d\xi)=-\gamma.
$$
This completes the proof due to Corollary \ref{Phi_alpha^infty(H)} and Lemma \ref{L^{<alpha>}_infty cap C_alpha}.
\end{proof}

Letting $H=I(\rd)$ in Theorems \ref{Phi_alpha^m(H)} and \ref{Phi_alpha^infty(H) 2} and Corollary \ref{Phi_alpha^infty(H)}, we have the following two theorems.
\begin{thm}\label{R(Phi_{alpha}^infty)}
Let $m\in\Z_+$.
\begin{enumerate}[(i)]
\item When $\alpha \leq 0$,  $\mathfrak R(\Phi_{\alpha}^{m+1}) = L^{\la\alpha\ra}_m(\rd)$.
\item When $ 0<\alpha<1$, $\mathfrak R(\Phi_{\alpha}^{m+1}) = L^{\la\alpha\ra}_m(\rd)\cap \mathcal C_\alpha(\rd)$.
\item When $\alpha =1$,  $\mathfrak R(\Phi_1^{m+1}) = L^{\la1\ra}_m(\rd)\cap \mathcal C_1^*(\rd)$.
\item When $1<\alpha<2$, $\mathfrak R(\Phi_{\alpha}^{m+1}) = L^{\la\alpha\ra}_m(\rd)\cap \mathcal C_\alpha^0(\rd)$.
\end{enumerate}
\end{thm}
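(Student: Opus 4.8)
The plan is to specialize the already-proved Theorem \ref{Phi_alpha^m(H)} to the single choice $H=I(\rd)$, with the composition index $m+1$ in place of $m$. Since $I(\rd)$ is c.c.s.s. (as recorded in Section 2), the hypothesis of that theorem is met, and since $m\in\Z_+$ we have $m+1\in\N$, which is the admissible index range there. Two purely bookkeeping identifications then carry the entire argument, so I expect no genuine obstacle.

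First I would invoke the very definition $L^{\la\alpha\ra}_m(\rd)=\mathfrak Q_\alpha^{m+1}(I(\rd))$, so that every occurrence of $\mathfrak Q_\alpha^{m+1}(I(\rd))$ on the right-hand sides of Theorem \ref{Phi_alpha^m(H)} is literally $L^{\la\alpha\ra}_m(\rd)$. Second, I would note that the domain $\mathfrak D(\Phi_\alpha^{m+1})$ is always a subclass of $I(\rd)$, whence $I(\rd)\cap\mathfrak D(\Phi_\alpha^{m+1})=\mathfrak D(\Phi_\alpha^{m+1})$; consequently the left-hand side of each clause of Theorem \ref{Phi_alpha^m(H)}, namely $\Phi_\alpha^{m+1}\bigl(I(\rd)\cap\mathfrak D(\Phi_\alpha^{m+1})\bigr)$, collapses to $\Phi_\alpha^{m+1}\bigl(\mathfrak D(\Phi_\alpha^{m+1})\bigr)=\mathfrak R(\Phi_\alpha^{m+1})$. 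For $\alpha<0$ this trivialization is automatic, since there $\mathfrak D(\Phi_\alpha^{m+1})=I(\rd)$, matching the form of Theorem \ref{Phi_alpha^m(H)} (i) that carries no domain intersection.

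With these two identifications in place, each case can be read off directly. For $\alpha<0$ I would use Theorem \ref{Phi_alpha^m(H)} (i) and for $\alpha=0$ Theorem \ref{Phi_alpha^m(H)} (ii), both yielding $\mathfrak R(\Phi_\alpha^{m+1})=L^{\la\alpha\ra}_m(\rd)$, which merge into clause (i); for $0<\alpha<1$ I would use (iii) to pick up the intersection with $\mathcal C_\alpha(\rd)$; for $\alpha=1$ use (iv) to pick up the intersection with $\mathcal C_1^*(\rd)$; and for $1<\alpha<2$ use (v) to pick up the intersection with $\mathcal C_\alpha^0(\rd)$. Since the statement is nothing more than a specialization of a theorem proved above, the only points that require any care are the legitimacy of the index shift $m\mapsto m+1$ (valid because $m+1\in\N$ whenever $m\in\Z_+$) and the verification that the domain intersection trivializes; both are routine.
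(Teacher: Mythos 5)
Your proposal is correct and takes essentially the same route as the paper: the paper's entire proof consists of setting $H=I(\rd)$ in Theorem \ref{Phi_alpha^m(H)}, exactly as you do, relying on the definition $L^{\la\alpha\ra}_m(\rd)=\mathfrak Q_\alpha^{m+1}(I(\rd))$ and on the fact that $I(\rd)$ is c.c.s.s. The two bookkeeping points you spell out, the index shift $m\mapsto m+1$ and the collapse $I(\rd)\cap\mathfrak D(\Phi_\alpha^{m+1})=\mathfrak D(\Phi_\alpha^{m+1})$ giving $\mathfrak R(\Phi_\alpha^{m+1})$, are precisely what the paper leaves implicit.
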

\begin{thm}\label{Phi_alpha^infty}
\begin{enumerate}[(i)]
\item When $\alpha \leq 0$,  
$$
\lim_{m\to\infty}\mathfrak R(\Phi_{\alpha}^{m+1}) = L_\infty(\rd).
$$
\item When $ 0<\alpha<1$, 
$$
\lim_{m\to\infty}\mathfrak R(\Phi_{\alpha}^{m+1}) = L^{(\alpha,2)}_\infty(\rd).
$$
\item When $\alpha =1$,  
\begin{align*}
\lim_{m\to\infty}\mathfrak R(\Phi_1^{m+1}) = 
\Biggl\{&\mu=\mu_{(A,\nu,\gamma)}\in L^{(1,2)}_\infty(\rd)\colon\\ 
&\lim_{\varepsilon\downarrow 0}\int_{(1,2)}B\left(\frac{3-\beta}{2},\frac{\beta+1}{2}\right)
\frac{1-\varepsilon^{\beta-1}}{\beta-1}\Gamma^\mu(d\beta)\int_S \xi\lambda_\beta^\mu(d\xi)=-\gamma
\Biggr\}.
\end{align*}
\item When $1<\alpha<2$, 
$$
\lim_{m\to\infty}\mathfrak R(\Phi_{\alpha}^{m+1}) = L^{(\alpha,2)}_\infty(\rd)\cap I_1^0(\rd).
$$
\end{enumerate}
\end{thm}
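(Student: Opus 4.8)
The plan is to obtain Theorem~\ref{Phi_alpha^infty} as the specialization $H=I(\rd)$ of the limit results already established for a general c.c.s.s.\ class, since every piece of analytic work has been carried out there. The first step is a bookkeeping observation: because $\mathfrak D(\Phi_\alpha^{m+1})\subset I(\rd)$, we have $I(\rd)\cap\mathfrak D(\Phi_\alpha^{m+1})=\mathfrak D(\Phi_\alpha^{m+1})$, and hence $\Phi_\alpha^{m+1}\bigl(I(\rd)\cap\mathfrak D(\Phi_\alpha^{m+1})\bigr)=\Phi_\alpha^{m+1}\bigl(\mathfrak D(\Phi_\alpha^{m+1})\bigr)=\mathfrak R(\Phi_\alpha^{m+1})$. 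Thus the ranges $\mathfrak R(\Phi_\alpha^{m+1})$ are exactly the sets appearing in Theorem~\ref{Phi_alpha^m(H)}, Corollary~\ref{Phi_alpha^infty(H)}, and Theorem~\ref{Phi_alpha^infty(H) 2} upon taking $H=I(\rd)$.

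Next I would verify the hypotheses imposed on $H$ in each regime. The containment $I(\rd)\supset S(\rd)$ covers the case $\alpha\le 0$, while $I(\rd)\supset\bigcup_{\beta\in[\alpha,2]}S_\beta(\rd)$ covers $0<\alpha<2$; both are immediate. With these in hand I feed $H=I(\rd)$ into the relevant infinite-iteration results. For $0<\alpha<1$, $\alpha=1$, and $1<\alpha<2$, parts (i)--(iii) of Theorem~\ref{Phi_alpha^infty(H) 2} directly deliver the right-hand sides of (ii)--(iv), namely $L^{(\alpha,2)}_\infty(\rd)$, the constraint class described via the Beta-function integral in (iii), and $L^{(\alpha,2)}_\infty(\rd)\cap I_1^0(\rd)$, respectively. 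For $\alpha\le 0$, parts (i)--(ii) of Corollary~\ref{Phi_alpha^infty(H)} give the limit as $\overline{S(\rd)}$; I then rewrite this as $L_\infty(\rd)$ by combining Theorem~\ref{L infty}(i) with the identity $L_\infty(\rd)=L^{\la0\ra}_\infty(\rd)$, so that both expressions equal $\overline{S(\rd)}$.

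One point to settle explicitly is the index shift between $\lim_{m\to\infty}\mathfrak R(\Phi_\alpha^{m+1})=\bigcap_{m=1}^\infty\mathfrak R(\Phi_\alpha^{m+1})$ and the limit $\bigcap_{m=1}^\infty\Phi_\alpha^m(H\cap\mathfrak D(\Phi_\alpha^m))$ as stated in Corollary~\ref{Phi_alpha^infty(H)} and Theorem~\ref{Phi_alpha^infty(H) 2}. Since the sequence $\{\Phi_\alpha^m(H\cap\mathfrak D(\Phi_\alpha^m))\}_{m\in\N}$ is decreasing---by Proposition~\ref{Q_alpha^m_property}(iii) together with Theorem~\ref{Phi_alpha^m(H)}, as recorded in the remark just preceding Corollary~\ref{Phi_alpha^infty(H)}---dropping the first term leaves the intersection unchanged, so the two limits coincide with $H=I(\rd)$.

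Candidly, there is no substantive obstacle in this theorem: it is a direct corollary, and the only care required is the domain identity of the first step, the verification of the stable-class containments, the index-shift remark, and the rewriting of $\overline{S(\rd)}$ as $L_\infty(\rd)$ in case (i). All the genuine difficulty---the computation of the $\Phi_\alpha$-ranges and, in the boundary case $\alpha=1$, the identification of the limit class through the Beta-function integral---has already been discharged in Theorems~\ref{Phi_alpha^m(H)} and~\ref{Phi_alpha^infty(H) 2} and in Lemma~\ref{L^{<alpha>}_infty cap C_alpha}, so the present argument is purely a matter of assembling those results.
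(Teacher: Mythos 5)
Your proposal is correct and is essentially the paper's own proof: the paper obtains this theorem precisely by setting $H=I(\rd)$ in Theorems \ref{Phi_alpha^m(H)} and \ref{Phi_alpha^infty(H) 2} and Corollary \ref{Phi_alpha^infty(H)}, exactly as you do. Your additional checks (the domain identity, the stable-class containments $I(\rd)\supset S(\rd)$ and $I(\rd)\supset\bigcup_{\beta\in[\alpha,2]}S_\beta(\rd)$, the index shift, and the identification $L_\infty(\rd)=L^{\la0\ra}_\infty(\rd)=\overline{S(\rd)}$ via Theorem \ref{L infty}) are the details the paper leaves implicit.
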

\begin{rem}
The two theorems above in the case $\alpha=0$ are well-known results.
Also, Theorem \ref{Phi_alpha^infty} in the case $-1\leq \alpha<0$ is already proved in Example 3.5 (5) of \citet{MaejimaSato2009}.
Mappings having the same iterated limits as those of $\Phi_{\alpha},\alpha\in(0,2),$ were already found by \citet{Sato20072008}.
\end{rem}
\vskip 10mm
\section{A supplementary remark}
Theorems \ref{L infty} and \ref{Phi_alpha^infty} have given us the limits of the nested subclasses in terms of limit theorems and mappings, respectively,
where, the forms of the limits look quite dependent on $\alpha$.
However, if we do not care explicit forms of the classes, we can unify the expressions of the results
into one expression as follows.
The first one is a restatement of Theorem \ref{L infty}.
\begin{thm}\label{L infty=L cap S}
Let $\alpha\in\R$.
Then
$L^{\la\alpha\ra}_\infty(\rd)=L^{\la\alpha\ra}(\rd)\cap\overline{S(\rd)}$.
\end{thm}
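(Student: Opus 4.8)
The plan is to prove the identity separately on the ranges $\alpha\le0$, $0<\alpha<2$, $\alpha=2$ and $\alpha>2$, the first, third and fourth being immediate and the second carrying all the work. For $\alpha\le0$, Theorem \ref{L infty}(i) gives $L^{\la\alpha\ra}_\infty(\rd)=\overline{S(\rd)}$, while Proposition \ref{supset}(i) with $m=0$ gives $\overline{S(\rd)}\subset L^{\la\alpha\ra}_0(\rd)=L^{\la\alpha\ra}(\rd)$; hence $L^{\la\alpha\ra}(\rd)\cap\overline{S(\rd)}=\overline{S(\rd)}$ and the two sides coincide. For $\alpha=2$ and $\alpha>2$, both $L^{\la\alpha\ra}_\infty(\rd)$ and $L^{\la\alpha\ra}(\rd)=L^{\la\alpha\ra}_0(\rd)$ equal the class of Gaussian, respectively $\delta$-, distributions by Proposition \ref{supset}(iii),(iv) and Corollary \ref{alpha>=2}; since these classes sit inside $\overline{S(\rd)}$, intersecting with $\overline{S(\rd)}$ is harmless and the identity follows.

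It remains to treat $0<\alpha<2$, where by Theorem \ref{L infty}(ii) the assertion becomes $\overline{\bigcup_{\beta\in[\alpha,2]}S_\beta(\rd)}=L^{\la\alpha\ra}(\rd)\cap\overline{S(\rd)}$. The inclusion ``$\subset$'' is routine, and I would record it first: every $S_\beta(\rd)$ with $\beta\ge\alpha$ lies in $L^{\la\alpha\ra}(\rd)$ (take $\rho_b=\delta_{c_b}$ when $\beta=\alpha$, and $\wh\rho_b(z)=\wh\mu(b^{-1}z)^{b^\beta-b^\alpha}e^{i\la c_b,z\ra}$ when $\beta>\alpha$, exactly as in the proof of Proposition \ref{supset Q_alpha}(i)), and $L^{\la\alpha\ra}(\rd)$ is c.c.s.s.\ by Proposition \ref{The properties of L^{<alpha>}_m}(i), so it contains the closure; the inclusion in $\overline{S(\rd)}$ is clear.

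The core is the reverse inclusion $L^{\la\alpha\ra}(\rd)\cap\overline{S(\rd)}\subset L^{\la\alpha\ra}_\infty(\rd)$, which I would prove through $h$-functions, writing $h^{(0)}_\xi$ and $h^{(\alpha)}_\xi$ for the $h$-functions of the polar decompositions of $\nu$ with exponents $0$ and $\alpha$. Fix $\mu=\mu_{(A,\nu,\gamma)}$ in the intersection, the case $\nu=0$ (then $\mu$ is Gaussian, hence in $S_2(\rd)\subset L^{\la\alpha\ra}_\infty(\rd)$) being trivial. Since $\mu\in\overline{S(\rd)}=L^{\la0\ra}_\infty(\rd)$, Theorem \ref{monotone h-function}(ii) and Lemma \ref{Gamma_xi}, both with exponent $0$, give for $\lambda$-a.e.\ $\xi$ a spectral measure $\Gamma_\xi$ on $(0,2)$ with $h^{(0)}_\xi(u)=\int_{(0,2)}e^{\beta u}\Gamma_\xi(d\beta)$. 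Because the two decompositions of the single measure $\nu$ share the same spherical measure $\lambda$ --- in each case $\lambda$ is the normalization of the intrinsic finite measure $\sigma(C):=\int_{\rd}\1_C(x/|x|)(|x|^2\wedge1)\,\nu(dx)$ on $S$ --- uniqueness of the radial disintegration in Theorem \ref{characterization radial} yields $k^{(\alpha)}_\xi(r)=r^{\alpha}k^{(0)}_\xi(r)$, so that $h^{(\alpha)}_\xi(u)=e^{-\alpha u}h^{(0)}_\xi(u)=\int_{(0,2)}e^{(\beta-\alpha)u}\Gamma_\xi(d\beta)$. Now $\mu\in L^{\la\alpha\ra}(\rd)=L^{\la\alpha\ra}_0(\rd)$ forces $k^{(\alpha)}_\xi$ to be nonincreasing, so $h^{(\alpha)}_\xi$ is finite and nondecreasing; were $\Gamma_\xi((0,\alpha))>0$, the exponents $\beta-\alpha<0$ would drive $h^{(\alpha)}_\xi(u)\to+\infty$ as $u\to-\infty$ by monotone convergence, contradicting $\lim_{u\to-\infty}h^{(\alpha)}_\xi(u)=\inf_u h^{(\alpha)}_\xi(u)\le h^{(\alpha)}_\xi(0)<\infty$. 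Hence $\Gamma_\xi$ is supported on $[\alpha,2)$, every exponent $\beta-\alpha$ is $\ge0$, and $h^{(\alpha)}_\xi$, being a positive mixture of the absolutely monotone functions $e^{(\beta-\alpha)u}$, is itself absolutely monotone for $\lambda$-a.e.\ $\xi$; Theorem \ref{monotone h-function}(ii) with exponent $\alpha$ then places $\mu$ in $L^{\la\alpha\ra}_\infty(\rd)$, completing the case $0<\alpha<2$.

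I expect the main obstacle to be precisely this reverse inclusion, and within it the verification that the exponent-$0$ and exponent-$\alpha$ polar decompositions carry the same $\lambda$, so that $h^{(\alpha)}_\xi=e^{-\alpha u}h^{(0)}_\xi$ holds exactly rather than up to some angular change of variables. Once that identification is in hand, the blow-up at $-\infty$ of a finite nondecreasing function, together with Theorem \ref{monotone h-function}, closes the argument.
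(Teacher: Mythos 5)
Your proof is correct, and it takes a genuinely different route from the paper's in the only case that carries weight, $0<\alpha<2$. The paper also reduces matters to the inclusion $L^{\la\alpha\ra}(\rd)\cap\overline{S(\rd)}\subset L^{\la\alpha\ra}_\infty(\rd)$, but it argues globally: it writes the L\'evy measure of $\mu\in\overline{S(\rd)}$ via the spectral measure $\Gamma^\mu$ of Theorem \ref{L^<alpha>_infty} (exponent $0$), invokes the moment property that $\mu\in L^{\la\alpha\ra}(\rd)$ forces $\int_{|x|>1}|x|^{\alpha'}\nu(dx)<\infty$ for every $\alpha'\in(0,\alpha)$, deduces from $\int_{(0,2)}\Gamma^\mu(d\beta)\int_1^\infty r^{\alpha'-\beta-1}dr<\infty$ that $\Gamma^\mu((0,\alpha'])=0$, hence $\Gamma^\mu((0,\alpha))=0$, and concludes by Theorem \ref{L^<alpha>_infty} (ii). You instead work fiber-wise with $h$-functions: your identification of the exponent-$0$ and exponent-$\alpha$ polar decompositions is sound, since the normalization condition in Theorem \ref{characterization radial} forces both spherical measures to equal $c^{-1}\sigma$ with $\sigma(C)=\int_{\rd}\1_C(x/|x|)(|x|^2\wedge1)\nu(dx)$, whence $h^{(\alpha)}_\xi(u)=e^{-\alpha u}h^{(0)}_\xi(u)$ by uniqueness of the disintegration; then the finiteness/monotonicity blow-up argument kills $\Gamma_\xi((0,\alpha))$, and Theorem \ref{monotone h-function} (ii) finishes. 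Both arguments encode the same fact---no spectral mass below $\alpha$---but the paper's is shorter because it imports the fractional-moment estimate for $L^{\la\alpha\ra}(\rd)$ (essentially from the companion paper), while yours stays entirely inside the machinery displayed in this paper (Theorem \ref{characterization radial}, Lemma \ref{Gamma_xi}, Theorem \ref{monotone h-function}) at the cost of the disintegration bookkeeping. Two minor remarks: your ``$\subset$'' direction for $0<\alpha<2$ re-proves stability of stable laws under the $\rho_b$-construction, whereas Proposition \ref{The properties of L^{<alpha>}_m} (iii) together with Theorem \ref{L infty} (ii) gives it in one line, which is what the paper does; and applying Lemma \ref{Gamma_xi} requires absolute monotonicity for all $\xi$ while Theorem \ref{monotone h-function} provides it only for $\lambda$-a.e.\ $\xi$, so one should modify $h^{(0)}_\xi$ on a $\lambda$-null set first---a technicality your argument shares with the paper's own uses of that lemma. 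The peripheral cases $\alpha\leq0$ and $\alpha\geq2$ are handled exactly as in the paper.
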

\begin{proof}
(iii) and (iv) of Proposition \ref{supset} assure the statement for $\alpha\geq 2$.
If $\alpha\leq 0$, Proposition \ref{supset} (i) and Theorem \ref{L infty} yields the statement.
Let $0<\alpha<2$. Then Propositions \ref{The properties of L^{<alpha>}_m} (iii) and \ref{L infty} (ii) yields that $L^{\la\alpha\ra}_\infty(\rd)\subset L^{\la\alpha\ra}(\rd)\cap\overline{S(\rd)}$.
Let $\mu\in L^{\la\alpha\ra}(\rd)\cap\overline{S(\rd)}$ with L\'evy measure $\nu$. Since $\mu\in\overline{S(\rd)}$, we have
$$
\nu(B)=\int_{(0,2)}\Gamma^\mu(d\beta)\int_S\lambda_\beta^\mu(d\xi)\int_0^\infty\1_B(r\xi)r^{-\beta-1}dr,
\quad B\in\B(\rd).
$$
Since $\mu\in L^{\la\alpha\ra}(\rd)$, we have that for all $\alpha'\in(0,\alpha)$, $\int_{|x|>1}|x|^{\alpha'}\nu(dx)<\infty$. Then
$$
\int_{(0,2)}\Gamma^\mu(d\beta)\int_1^\infty r^{\alpha'-\beta-1}dr<\infty,
$$
which entails $\Gamma^\mu((0,\alpha'])=0$ for all $\alpha'\in(0,\alpha)$.
Therefore $\Gamma^\mu((0,\alpha))=0$.
It follows from Theorem \ref{L^<alpha>_infty} (ii) that $\mu\in L^{\la\alpha\ra}_\infty(\rd)$.
\end{proof}
Using the theorem above, we have the following, which is a restatement  of Theorem \ref{Phi_alpha^infty}.
\begin{thm}
Let $\alpha\in\R$.
Then
\begin{equation}\label{limit=range cap S}
\lim_{m\to\infty}\mathfrak R(\Phi_{\alpha}^{m+1}) =\mathfrak R(\Phi_{\alpha})\cap\overline{S(\rd)}.
\end{equation}
\end{thm}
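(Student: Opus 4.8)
The plan is to verify \eqref{limit=range cap S} by distinguishing the ranges of $\alpha$, but to organize every case around the single identity of Theorem \ref{L infty=L cap S}, namely $L^{\la\alpha\ra}_\infty(\rd)=L^{\la\alpha\ra}(\rd)\cap\overline{S(\rd)}$. The essential observation I would exploit is that, for each $\alpha<2$, the range $\mathfrak R(\Phi_\alpha)$ recorded in \eqref{Phi range} and the iterated limit $\lim_{m\to\infty}\mathfrak R(\Phi_\alpha^{m+1})$ carry one and the same ``cut-off factor''. Writing $\mathcal C_{(\alpha)}$ for $\mathscr P(\rd)$ when $\alpha\le 0$, for $\mathcal C_\alpha(\rd)$ when $0<\alpha<1$, for $\mathcal C_1^*(\rd)$ when $\alpha=1$, and for $\mathcal C_\alpha^0(\rd)$ when $1<\alpha<2$, the formula \eqref{Phi range} reads uniformly as $\mathfrak R(\Phi_\alpha)=L^{\la\alpha\ra}(\rd)\cap\mathcal C_{(\alpha)}$. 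At the same time, Corollary \ref{Phi_alpha^infty(H)} applied with the c.c.s.s.\ class $H=I(\rd)$ (for which $\mathfrak Q_\alpha^\infty(I(\rd))=L^{\la\alpha\ra}_\infty(\rd)$, and which contains $S(\rd)$ as required) yields the companion formula $\lim_{m\to\infty}\mathfrak R(\Phi_\alpha^{m+1})=L^{\la\alpha\ra}_\infty(\rd)\cap\mathcal C_{(\alpha)}$ with exactly the same $\mathcal C_{(\alpha)}$.

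With these two uniform expressions in hand, the heart of the argument is a short chain of set manipulations:
\begin{align*}
\mathfrak R(\Phi_\alpha)\cap\overline{S(\rd)}
&=\bigl(L^{\la\alpha\ra}(\rd)\cap\mathcal C_{(\alpha)}\bigr)\cap\overline{S(\rd)}
=\bigl(L^{\la\alpha\ra}(\rd)\cap\overline{S(\rd)}\bigr)\cap\mathcal C_{(\alpha)}\\
&=L^{\la\alpha\ra}_\infty(\rd)\cap\mathcal C_{(\alpha)}
=\lim_{m\to\infty}\mathfrak R(\Phi_\alpha^{m+1}),
\end{align*}
where the third equality is Theorem \ref{L infty=L cap S}. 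This settles \eqref{limit=range cap S} for every $\alpha<2$, and notably requires no appeal to the explicit descriptions of the limiting classes collected in Theorem \ref{Phi_alpha^infty}; those descriptions are consequences, not ingredients, of the present restatement.

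It remains to dispose of the boundary and trivial cases. For $\alpha\ge 2$, \eqref{Phi range} forces both the domain and the range of $\Phi_\alpha$ to reduce to $\{\delta_0\}$, so $\Phi_\alpha^m(\{\delta_0\})=\{\delta_0\}$ gives $\lim_{m\to\infty}\mathfrak R(\Phi_\alpha^{m+1})=\{\delta_0\}$, while $\{\delta_0\}\cap\overline{S(\rd)}=\{\delta_0\}$ since $\delta_0\in\overline{S(\rd)}$. I expect the only genuine subtlety to be the bookkeeping of the factors $\mathcal C_{(\alpha)}$: one must check that the cut-off class attached to $\mathfrak R(\Phi_\alpha)$ in \eqref{Phi range} is \emph{literally} the class that Corollary \ref{Phi_alpha^infty(H)} attaches to the iterated limit, so that it may be pulled outside the intersection unchanged. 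This is most delicate at $\alpha=1$, where $\mathcal C_1^*(\rd)$ is by definition contained in $L^{\la1\ra}(\rd)\cap\mathcal C_1(\rd)$; the inclusion $\mathcal C_1^*(\rd)\subset L^{\la1\ra}(\rd)$ is exactly what legitimizes the factorization $L^{\la1\ra}(\rd)\cap\mathcal C_1^*(\rd)=\mathcal C_1^*(\rd)$ and lets the displayed chain close without touching the intricate $\gamma$-condition form of Theorem \ref{Phi_alpha^infty}(iii). Once this identification of factors is secured, the proof rests entirely on Theorem \ref{L infty=L cap S} together with Corollary \ref{Phi_alpha^infty(H)}.
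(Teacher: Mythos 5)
Your proposal is correct and follows essentially the same route as the paper: the paper likewise handles $\alpha\geq 2$ trivially via $\Phi_\alpha^m(\{\delta_0\})=\{\delta_0\}$ and, for $\alpha<2$, combines Theorem \ref{L infty=L cap S} with \eqref{Phi range} and the $H=I(\rd)$ specialization of the iterated-range results, exactly your set-algebra chain with the cut-off factor $\mathcal C_{(\alpha)}$ pulled through the intersection. The only cosmetic difference is the citation: the paper invokes Theorem \ref{R(Phi_{alpha}^infty)} (the finite-$m$ form) and passes to the intersection over $m$, whereas you invoke Corollary \ref{Phi_alpha^infty(H)} directly; also, your worry at $\alpha=1$ is unnecessary, since the rearrangement $\bigl(L^{\la1\ra}(\rd)\cap\mathcal C_1^*(\rd)\bigr)\cap\overline{S(\rd)}=\bigl(L^{\la1\ra}(\rd)\cap\overline{S(\rd)}\bigr)\cap\mathcal C_1^*(\rd)$ is pure commutativity of intersection and needs no inclusion $\mathcal C_1^*(\rd)\subset L^{\la1\ra}(\rd)$.
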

\begin{proof}
If $\alpha\geq 2$, then $\lim_{m\to\infty}\mathfrak R(\Phi_{\alpha}^{m+1}) =\mathfrak R(\Phi_{\alpha})\cap\overline{S(\rd)}=\{\delta_0\}$.
Combining Theorem \ref{L infty=L cap S} with \eqref{Phi range} and Theorem \ref{R(Phi_{alpha}^infty)}, we have the statement for $\alpha<2$.
\end{proof}
\begin{rem}
Many known mappings satisfies \eqref{limit=range cap S}, (see, e.g. \citet{MaejimaSato2009}).
However, some mappings does not fulfill \eqref{limit=range cap S}, (see, e.g. \citet{MaejimaUeda2009}).
\end{rem}
\vskip 10mm

\vskip 10mm
\end{document}